\newtheorem{theorem}{Theorem}
\newtheorem{lemma}[theorem]{Lemma}
\newtheorem{proposition}[theorem]{Proposition}
\newtheorem{corollary}[theorem]{Corollary}
\theoremstyle{definition}
\newtheorem{definition}[theorem]{Definition}
\newtheorem{remark}[theorem]{Remark}
\newcommand{\R}{\mathbb{R}}
\newcommand{\C}{\mathbb{C}}
\newcommand{\Q}{\mathbb{H}}
\newcommand{\Z}{\mathbb{Z}}
\newcommand{\Hopf}{\operatorname{Hopf}}
\newcommand{\FHopf}{\operatorname{FrameHopf}}
\newcommand{\Pol}{\operatorname{Pol}}
\newcommand{\hPol}{\overline{\Pol}}
\newcommand{\Arm}{\operatorname{Arm}}
\newcommand{\hArm}{\overline{\Arm}}
\newcommand{\Length}{\operatorname{Length}}
\newcommand{\cross}{\times}
\newcommand{\Sp}{\operatorname{Sp}}
\newcommand{\chord}{\operatorname{Chord}}
\newcommand{\schord}{\operatorname{sChord}}
\newcommand{\dVol}{\thinspace\operatorname{dVol}}
\newcommand{\Vol}{\operatorname{Vol}}
\newcommand{\dr}{\,\mathrm{d}r}
\newcommand{\sgyradius}{\operatorname{sGyradius}}
\newcommand{\gyradius}{\operatorname{Gyradius}}
\newcommand{\ePol}{\operatorname{ePol}}
\newcommand{\eArm}{\operatorname{eArm}}
\newcommand{\I}{\mathbf{i}}
\newcommand{\J}{\mathbf{j}}
\newcommand{\K}{\mathbf{k}}
\newcommand{\Beta}{\operatorname{\mathrm{B}}}
\newcommand{\Cov}{\operatorname{Cov}}
\def\co{\colon\thinspace}
\let\mgp=\marginpar \marginparwidth18mm \marginparsep1mm
\def\marginpar#1{\mgp{\raggedright\tiny #1}}
\let\lbl=\label
\def\label#1{\lbl{#1}\ifinner\else\marginpar{\ref{#1} #1}\ignorespaces\fi}
\begin{document}
\title[]{Probability Theory of Random Polygons from the Quaternionic Viewpoint}
\author{Jason Cantarella}
\altaffiliation{University of Georgia, Mathematics Department, Athens GA}
\noaffiliation
\author{Tetsuo Deguchi}
\altaffiliation{Ochanomizu University, Physics Department, Tokyo, Japan}
\noaffiliation
\author{Clayton Shonkwiler}
\altaffiliation{University of Georgia, Mathematics Department, Athens GA}
\noaffiliation

\begin{abstract} 
We build a new probability measure on closed space and plane polygons. The key construction is a map, given by Knutson and Hausmann using the Hopf map on quaternions, from the complex Stiefel manifold of 2-frames in $n$-space to the space of closed $n$-gons in 3-space of total length 2. Our probability measure on polygon space is defined by pushing forward Haar measure on the Stiefel manifold by this map. A similar construction yields a probability measure on plane polygons which comes from a real Stiefel manifold. 

The edgelengths of polygons sampled according to our measures obey beta distributions. This makes our polygon measures different from those usually studied, which have Gaussian or fixed edgelengths. One advantage of our measures is that we can explicitly compute expectations and moments for chordlengths and radii of gyration. Another is that direct sampling according to our measures is fast (linear in the number of edges) and easy to code. 

Some of our methods will be of independent interest in studying other probability measures on polygon spaces. We define an edge set ensemble (ESE) to be the set of polygons created by rearranging a given set of n edges. A key theorem gives a formula for the average over an ESE of the squared lengths of chords skipping $k$ vertices in terms of $k$, $n$, and the edgelengths of the ensemble. This allows one to easily compute expected values of squared chordlengths and radii of gyration for any probability measure on polygon space invariant under rearrangements of edges.  
\end{abstract}
\date{\today}
\maketitle

\section{Introduction}
In 1997, Jean-Claude Hausmann and Allen Knutson~\cite{Knutson:2_iyExxE} introduced a useful description of the space of closed $n$-edge space polygons of total length 2. They constructed a smooth surjection based on the Hopf map from the Stiefel manifold $V_2(\C^n)$ of Hermitian orthonormal 2-frames in complex $n$-space to the space of edge sets of closed $n$-edge polygons in $\R^3$ of total length 2. They proved that over ``proper" polygons with no length-zero edges, this map is locally a smooth $U(1)^n$ bundle\footnote{Recently, Howard, Manon and Millson~\cite{Howard:2008uy} explained the fiber of this map as the set of \emph{framings} of the polygon. We do not need that structure in this paper, but will use this description of framed polygons in a future paper.}. 

While Knutson and Hausmann were interested in this map primarily as a way to analyze the symplectic and algebraic geometry of polygon space, our focus is on the theory of random polygons. The main idea of this paper is to use versions of their map to push forward natural and highly symmetric probability measures from four Riemannian manifolds to four spaces of polygons:
the spheres in quaternionic and complex $n$-space map to open $n$-gons of fixed (total) length 2 in space and in the plane and the Stiefel manifolds of 2-frames in complex and real $n$-space map to closed $n$-gons of fixed (total) length 2 in space and in the plane. We call the spheres and Stiefel manifolds the ``model spaces'' for their spaces of polygons. This construction suggests a natural measure on polygon spaces that does not seem to have been studied before: the measure pushed forward from Haar measure on the model spaces. Since Haar measure is maximally symmetric, these measures are mathematically fundamental and may be physically significant. In this context, we find it promising that in the case of an equilateral $n$-edge polygon, our measure restricts to the standard probability measure on arm space: the product measure on $n$ copies of the standard $S^2$. Further, for a closed equilateral polygon, our measure again restricts to the expected one: it is the subspace measure on $n$-tuples of vectors in the round $S^2$ which sum to zero. We note that alternate measures on the polygon spaces can be constructed in the same way by choosing different measures on the model spaces (such as the various measures on Stiefel manifolds found in \cite{Chikuse:2003we}).  

The most important practical property of these measures is that it is very easy to directly sample $n$-edge closed polygons in $O(n)$ time (the constant is small), allowing us to experiment with very large and high-quality ensembles of polygons. The most important theoretical property of this measure is that it is highly symmetric, allowing us to prove theorems which match our experiments. We will be able to define a transitive measure-preserving action of the full unitary group $U(n)$ on $n$-edge closed space polygons of length 2. Using these symmetries, we will be able to explicitly compute simple exact formulae for the expected values of squared chord lengths and radii of gyration for random open and closed polygons of fixed length, with corresponding formulae for equilateral polygons. We can then obtain explicit bounds on how fast the chord lengths of a closed polygon converge to those of an open polygon as the number of edges increases, providing rigorous justification for the intuition that a sufficiently long polygon ``forgets'' that it is closed. 

The advantages of the present method for generating random polygons should be quite important in the study of ring polymers in polymer physics. While this is not a standard model for random polygons, our scaling results agree with the results for equilateral polygons in~\cite{Kramers:1946ki}, \cite{1949JChPh..17.1301Z}, and \cite{Casassa:1965tc}. Orlandini and Whittington~\cite{Orlandini:2007kn} give an excellent survey on what is known about the effects of topological constraints (such as knotting or linking) on the behavior of ring polymers. While we do not analyze knotting and linking here, our hope is that by providing an exactly solvable model together with an efficient algorithm for unbiased sampling, we can eventually answer some of the open theoretical questions in this field. Furthermore, ring polymers of large molecular weights with small dispersion have been synthesized quite recently, by which one can experimentally confirm theoretical predictions. 

\clearpage

\section{Polygonal Arm Spaces, Moduli Spaces, and Quaternions}
\label{sec:armSpace}

We are interested in a number of spaces of polygons in this paper. For convenience, all of our polygon spaces will be composed of polygons with total length 2 (though our results apply, by scaling, to polygon spaces of any fixed length).
\begin{definition}
Let $\Arm_3(n)$ be the moduli space of $n$-edge polygons (which may not be closed) of length 2 up to translation in $\R^3$, and $\Arm_2(n)$ be the corresponding space of planar polygons. We note that by fixing a plane in $\R^3$, we have $\Arm_2(n) \subset \Arm_3(n)$. If we identify polygons related by a rotation, we have the commutative diagram:
\begin{equation}
\begin{diagram}
SO(3)   & \rTo   & \Arm_3(n)  & \rOnto  & \Arm_3(n)/SO(3) = \hArm_3(n)  \\
\uTo  &      & \uTo     &         & \uTo                      \\
O(2)    & \rTo   & \Arm_2(n)  & \rOnto  & \Arm_2(n)/O(2) = \hArm_2(n)  \\
\end{diagram}
\label{eq:armTwoThree}
\end{equation}
where $\hArm_3(n)$ is the moduli space of $n$-edge polygons of length 2 up to translation and rotation in $\R^3$, but $\hArm_2(n)$ is the space of $n$-edge polygons of length 2 up to translation, rotation, \emph{and reflection} in $\R^2$. This additional identification is needed to make $\hArm_2(n) \subset \hArm_3(n)$, since two polygons related by a reflection in the plane are related by a rotation in space. \end{definition}
An element of $\Arm_i(n)$ is a list of edge vectors $e_1, \dots, e_n$ in $\R^i$ whose lengths sum to $2$, while an element of $\hArm_i(n)$ is an equivalence class of edge lists. 

We now need some special properties of $\R^2$ and $\R^3$. Recall that the skew-algebra of quaternions is defined by adding formal elements $\I$, $\J$, and $\K$ to $\R$ with the relations that $\I^2 = \J^2 = \K^2 = -1$ and $\I\J = \K$, $\J\K = \I$, and $\K\I = \J$ while $\J\I = -\K$, $\K\J = -\I$, and $\I\K = -\J$. Using these rules, quaternionic multiplication is an associative (but not commutative) multiplication on $\R^4$ which makes $\R^4$ into a division algebra. As with complex numbers, we refer to the real and imaginary parts of a quaternion, though here the imaginary part is a 3-vector determined by the three coefficients of $\I$, $\J$, and $\K$. If we identify $\R^3$ with the imaginary quaternions, the unit quaternions ($S^3$) double-cover the orthonormal 3-frames ($SO(3)$) via the triple of Hopf maps $\Hopf_\I(q) = \bar{q}{\I}q$, $\Hopf_\J(q) = \bar{q}{\J}q$, and $\Hopf_\K(q) = \bar{q}{\K}q$. Since we will focus on the map $\Hopf_\I$ as our ``standard'' Hopf map, we denote $\Hopf_\I$ by $\Hopf$. 
\begin{multline}
\label{eq:hopf1}
q = (q_0,q_1,q_2,q_3) \mapsto \\
\left( 
\begin{array}{ccc}
\bar{q}{\I}q & \bar{q}{\J}q & \bar{q}{\K}q \\
\end{array}
\right)
=
\left(
\begin{array}{ccc}
 q_0^2+q_1^2-q_2^2-q_3^2 & 2 q_1 q_2+2 q_0
   q_3 & 2 q_1 q_3-2 q_0 q_2 \\
 2 q_1 q_2-2 q_0 q_3 &
   q_0^2-q_1^2+q_2^2-q_3^2 & 2 q_0 q_1+2 q_2
   q_3 \\
 2 q_0 q_2+2 q_1 q_3 & 2 q_2 q_3-2 q_0
   q_1 & q_0^2-q_1^2-q_2^2+q_3^2
\end{array}
\right)
\end{multline}

It is a standard computation to verify that $\bar{q}{\I}q$ (and the other columns) are purely imaginary quaternions with $\I$, $\J$ and $\K$ components matching the real matrix form at right, that they are all orthogonal in~$\R^3$, and that the norm of each column is the square of the norm of $q$. Further, if we call this map the ``frame Hopf map'' $\FHopf(q)$, then it is also standard that this is a covering map and  
\begin{equation}
\FHopf(q) = \FHopf(q') \iff q = \pm q'
\end{equation}

If we write a unit quaternion in the form $q = (\cos \nicefrac{\theta}{2}, \sin \nicefrac{\theta}{2}\, \vec{n})$, then the image ${\FHopf(q) \subset SO(3)}$ is the rotation around the axis $\vec{n}$ by angle $\theta$. The action of this rotation matrix on $\R^3$ corresponds to quaternionic conjugation by $q$. That is, explicitly, if we view a purely imaginary quaternion $p \in I\Q$ as a vector $\vec{p} \in \R^3$, we have
\begin{equation}
\FHopf(q) \vec{p} = \bar{q} p q.
\label{eq:conjugation}
\end{equation}

There is one last way of writing unit quaternions which will be important to us: the unit quaternions can be identified with the special unitary group $SU(2)$ by representing quaternions as \emph{Pauli matrices}. If we let $q = a + b \J$, where $a, b \in \C$, then the map $\eta \co \Q \rightarrow \operatorname{Mat}_{2 \cross 2}(\C)$ given by
\begin{equation}
\eta(q) = 
\left(
\begin{matrix}
a & b \\
-\bar{b} & \bar{a}
\end{matrix}
\right),
\,\,
\eta(1) = 
\left(
\begin{matrix}
1 & 0 \\
0 & 1
\end{matrix}
\right),
\,\, 
\eta(\I) = 
\left(
\begin{matrix}
i & 0 \\
0 & -i
\end{matrix}
\right),
\,\,
\eta(\J) = 
\left(
\begin{matrix}
0 & 1 \\
-1 & 0
\end{matrix}
\right),
\,\,
\eta(\K) = 
\left(
\begin{matrix}
0 & i \\
i & 0
\end{matrix}
\right)
\label{eq:pauliform}
\end{equation}
is an injective $\R$-algebra homomorphism. In particular, $\eta(\bar{q}) = \eta(q)^*$, $\eta(pq) = \eta(p) \eta(q)$ and $\det(\eta(q)) = q\bar{q} = |q|^2$, so $\eta$ is an isomorphism between the groups $U\Q$ and $SU(2)$. 

We can extend the Hopf map coordinatewise to a map $\Hopf \co \Q^n \rightarrow \R^{3n}$. 
\begin{proposition}
$\Hopf$ is a smooth map from the sphere of radius $\sqrt{2}$, $S^{4n-1} \subset \Q^n$ onto $\Arm_3(n)$. 
\end{proposition}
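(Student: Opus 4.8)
The plan is to verify three things in turn: that $\Hopf$ is smooth, that it carries the radius-$\sqrt{2}$ sphere into $\Arm_3(n)$, and that this restricted map is onto. Smoothness is immediate. Applying the formula \eqref{eq:hopf1} coordinatewise expresses $\Hopf \co \Q^n \to \R^{3n}$ through homogeneous quadratic polynomials in the $4n$ real coordinates of $q$, so $\Hopf$ is smooth on all of $\Q^n \isom \R^{4n}$, and its restriction to the smooth submanifold $S^{4n-1}$ is again smooth.

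For the image I would use the norm identity already recorded in the excerpt, namely that the single-quaternion map $q \mapsto \bar q \I q$ produces a purely imaginary quaternion with $\abs{\bar q \I q} = \abs{q}^2$. Writing $q = (q^1,\dots,q^n)$, the edge list $\Hopf(q) = (\bar{q^1}\I q^1, \dots, \bar{q^n}\I q^n)$ then has total length $\sum_j \abs{\bar{q^j}\I q^j} = \sum_j \abs{q^j}^2 = \abs{q}^2$. For $q$ on the sphere of radius $\sqrt{2}$ this sum equals $2$, so $\Hopf(q)$ is a genuine length-$2$ edge list and hence an element of $\Arm_3(n)$.

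The only substantive step is surjectivity, and here I would build an explicit preimage of an arbitrary edge list $(e_1,\dots,e_n)$ with $\sum_j \abs{e_j} = 2$. The key is that the single-quaternion Hopf map already surjects onto $\R^3$. Indeed, for a unit quaternion $u$ the vector $\bar u \I u$ is exactly $\FHopf(u)$ applied to the vector $\I$ via \eqref{eq:conjugation}, i.e. the first column of $\FHopf(u)$; since $\FHopf$ covers $SO(3)$ it is onto, and $SO(3)$ acts transitively on the unit sphere, so every direction $\hat e \in S^2$ is realized as $\bar u \I u$ for some unit $u$. Writing $e_j = \ell_j\, \hat e_j$ with $\ell_j = \abs{e_j}$, for each $j$ with $\ell_j > 0$ I would choose such a unit $u_j$ and set $q^j = \sqrt{\ell_j}\, u_j$ (and $q^j = 0$ when $\ell_j = 0$). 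Because the real scalar $\sqrt{\ell_j}$ commutes with quaternion multiplication, $\bar{q^j}\I q^j = \ell_j\, \bar{u_j}\I u_j = e_j$ and $\abs{q^j}^2 = \ell_j$. Then $q = (q^1,\dots,q^n)$ satisfies $\abs{q}^2 = \sum_j \ell_j = 2$, so $q \in S^{4n-1}$, and $\Hopf(q) = (e_1,\dots,e_n)$ by construction.

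The main obstacle, such as it is, is the surjectivity of the single Hopf map onto $S^2$, which I am reducing to the already-stated fact that $\FHopf$ covers $SO(3)$; everything else is bookkeeping with the norm identity and the commutation of real scalars. I would note that this argument says nothing about injectivity or fiber structure: the map is far from injective, since each nonzero edge has a circle's worth of preimages (reflecting the $U(1)^n$ symmetry discussed in the introduction), but the proposition claims only smoothness and surjectivity onto $\Arm_3(n)$.
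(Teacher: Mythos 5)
Your proof is correct and follows the same line as the paper's, which simply defines the edge set coordinatewise and checks that the total length is $2$ because the Hopf map squares norms. The paper leaves smoothness and surjectivity implicit; your explicit surjectivity argument via the covering $\FHopf \co S^3 \to SO(3)$, transitivity of $SO(3)$ on $S^2$, and scaling by $\sqrt{\ell_j}$ is a correct filling-in of that omitted detail.
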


\begin{proof} Given $\vec{q} \in \Q^n$, we define the edge set of the polygon $\Hopf(\vec{q})$ by 
	\[
		(e_1, \dots, e_n) := (\Hopf(q_1), \dots, \Hopf(q_n)).
	\]
The only thing to check is that
\begin{equation}
\Length(P) = \sum |e_i| = \sum |q_i|^2 = 2.
\end{equation}
This follows from the fact that the Hopf map squares norms.
\end{proof}

We now consider the moduli space $\hArm_3(n) = \Arm_3(n)/SO(3)$. By~\eqref{eq:conjugation}, for any unit quaternion $p$ we have
\begin{equation*}
\Hopf(qp) = \bar{p} \Hopf(q) p = \FHopf(p) \Hopf(q).
\end{equation*}
This means that the $\Hopf$ map takes equivalence classes of points in $\Q^n$ under right-multiplication by a unit quaternion to equivalence classes of edge vectors under the action of $SO(3)$. That is, $\Hopf$ maps points in the quaternionic projective space $\Q P^{n-1} = S^{4n-1}/Sp(1)$ to $\hArm_3(n)$. Here we use $Sp(1)$ to refer to the group of unit quaternions. This yields the commutative diagram
\begin{equation}
\begin{diagram}[h=0.35in]
Sp(1) = U\Q   & \rTo   & S^{4n-1} \subset \Q^n & \rOnto  & \Q P^{n-1} = S^{4n-1}/Sp(1) \\
\dOnto^{\FHopf}  &        & \dOnto^{\Hopf}  &       & \dOnto                       \\
SO(3)   & \rTo   & \Arm_3(n)  & \rOnto  & \Arm_3(n)/SO(3) = \hArm_3(n)   \\
\end{diagram}
\label{eq:armThree}
\end{equation}

Our spaces of planar arms fit naturally into this framework. Consider the planes $1 \oplus \J$ and $\I (1 \oplus \J)  = \I \oplus \K \subset \Q$. The Hopf map sends each of these to the $\I \oplus \K$ plane:
\begin{align}
\Hopf(a + b\J) &= (a - b\J) \I (a + b\J) = 
%(a - b\J) (a \I + b \K) = 
(a^2 - b^2) \I + 2 ab \K = \I \overline{(a + b \J)^2} \\
\Hopf(a\I + b\K) &= (-a\I - b\K) \I (a\I + b\K) = 
%-(a\I + b\K) (-a - b\J) = 
(a^2 - b^2) \I - 2 ab \K = \I (a + b \J)^2 
\label{eq:hopfcoords}
\end{align}
That is, if we think of $z = a + b \J$, then $\Hopf(z) = \I \overline{z}^2$ and $\Hopf(\I z) = \I z^2$. 

The planes $1 \oplus \J$ and $\I \oplus \K$ are preserved by quaternionic multiplication by $\I$ (which exchanges them) and by multiplication by any unit quaternion of the form $a + b\J$, which rotates each plane. If we think of unit quaternions as elements of $SU(2)$ using the $\eta$ map of~\eqref{eq:pauliform}, these matrices form a copy of $O(2)$ given by the subspace $SO(2) \subset SU(2)$ of matrices with real entries \emph{and} the complex matrices $\eta(\pm \I)$ which exchange purely real and purely imaginary matrices while negating one of the columns. 

We now have a diagram corresponding to~\eqref{eq:armThree} for planar polygons. The edge lists of polygons in the $\I \oplus \K$ plane are all Hopf images of points in the disjoint union of the complex spheres of radius $\sqrt{2}$ in the $2n$-dimensional subspaces of $\Q^n$ given by $(1 \oplus \J)^n$ and $(\I \oplus \K)^n$. These spheres are exchanged by the action of $\eta(\pm\I)$ and their (complex) coordinates are rotated by the action of the real matrices $SO(2) \subset SU(2)$. 
\begin{equation}
\begin{diagram}[h=0.35in]
SO(2) \cross \eta(\pm\I) = O(2)  & \rTo   & S^{2n-1} \sqcup S^{2n-1} \subset \Q^n & \rOnto  & S^{2n-1} \sqcup S^{2n-1}/O(2) \\
\dOnto^{\FHopf}  &        & \dOnto^{\Hopf}  &       & \dOnto                       \\
O(2)   & \rTo   & \Arm_2(n)  & \rOnto  & \Arm_2(n)/O(2) = \hArm_2(n)  \\
\end{diagram}
\raisetag{10pt}
\label{eq:armTwo}
\end{equation}
We note that while $\Hopf \co S^{2n-1} \sqcup S^{2n-1} \rightarrow \Arm_2(n)$ is surjective, these spheres are not all of the inverse image of $\Arm_2(n)$ in the unit sphere $S^{4n-1} \subset \Q^n$. There are also $2^{n-1}$ ``mixed'' spheres where some quaternionic coordinates lie in the $1 \oplus \J$ plane and others lie in the $\I \oplus \K$ plane who project to planar polygons. We won't need these extra spheres to define our measure on $\Arm_2(n)$.

Putting this all together, we have
\begin{proposition}
The diagrams~\eqref{eq:armThree} and~\eqref{eq:armTwo} can be joined with~\eqref{eq:armTwoThree} to form a large commutative diagram:
\newarrow{Is}{<}{-}{-}{-}{>}
\begin{equation*}
\begin{diagram}[w=0.45in,h=0.35in,tight]
SU(2)       & \rTo   &             &      & S^{4n-1} \subset \Q^n & \rTo   &           &        & S^{4n-1}/\Sp(1) = \Q P^{n-1}    &        & \\
             & \luTo  &     &      & \vLine                & \luTo  &           &        & \vLine             & \luTo  & \\
\dOnto   &        & O(2)       &      & \HonV                 & \rTo   & S^{2n-1} \sqcup S^{2n-1} & \rTo   & \HonV              &        & S^{2n-1} \sqcup S^{2n-1}/O(2) \\
             &        & \dOnto^{\FHopf}  &      & \dOnto_{\Hopf}          &        &           &        & \dOnto               &        &   \\
SO(3)        & \hLine & \VonH       & \rTo & \Arm_3(n)             & \hLine & \VonH     & \rOnto   & \hArm_3(n)         &        & \dOnto \\
             & \luTo  &     &      &                       & \luTo& \dOnto>{\Hopf}&      &                    & \luTo  & \\
             &        & O(2)        &      & \rTo                  &        & \Arm_2(n) &        & \rOnto               &        & \hArm_2(n) \\
\end{diagram}
\end{equation*}
\end{proposition}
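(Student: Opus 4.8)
The plan is to regard the large diagram not as a new object to be built from scratch but as the three-dimensional assembly of the two already-established commutative diagrams \eqref{eq:armThree} and \eqref{eq:armTwo}, glued along the planar-versus-spatial inclusion square \eqref{eq:armTwoThree}. The ``spatial'' diagram \eqref{eq:armThree} forms the back layer and the ``planar'' diagram \eqref{eq:armTwo} the front layer; the vertical maps of \eqref{eq:armTwoThree} (the inclusions $\Arm_2(n)\hookrightarrow\Arm_3(n)$ and $\hArm_2(n)\hookrightarrow\hArm_3(n)$, together with $O(2)\hookrightarrow SO(3)$) supply the connecting edges on the polygon side. Since each layer is known to commute, all that remains is to produce the connecting edges on the model side and to check that the newly created faces commute.

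First I would identify the three connecting morphisms on the model side: the inclusion $O(2)\hookrightarrow \Sp(1)=SU(2)$ realized through the $\eta$ map of \eqref{eq:pauliform} as $SO(2)\cup SO(2)\,\eta(\I)$; the inclusion of the two planar spheres $S^{2n-1}\sqcup S^{2n-1}\hookrightarrow S^{4n-1}\subset\Q^n$; and the induced map of quotients. The two genuinely new faces to verify are then essentially computational. For the central square, because $\Hopf$ is defined coordinatewise, the restriction of $\Hopf\co S^{4n-1}\to\Arm_3(n)$ to the subspheres is computed entry by entry, and \eqref{eq:hopfcoords} shows that each coordinate image lies in the $\I\oplus\K$ plane; hence this restriction factors as $\Hopf\co S^{2n-1}\sqcup S^{2n-1}\to\Arm_2(n)$ followed by the inclusion $\Arm_2(n)\hookrightarrow\Arm_3(n)$. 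For the left face, I would use \eqref{eq:conjugation} to evaluate $\FHopf$ on the subgroup $SO(2)\cup SO(2)\,\eta(\I)$ and confirm that its image is exactly the copy of $O(2)\subset SO(3)$ stabilizing the $\I\oplus\K$ plane that appears in \eqref{eq:armTwoThree}.

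The main obstacle, and the only place where something genuinely subtle happens, is the reflection. The element $\eta(\I)$ lies in $SU(2)$, so $\FHopf(\eta(\I))$ is an honest rotation of $\R^3$ (determinant $+1$ in $SO(3)$); but its restriction to the invariant plane $\I\oplus\K$ is orientation-reversing (determinant $-1$ in $O(2)$). This is precisely the phenomenon already flagged in the definition of $\hArm_2(n)$, that a planar reflection is induced by a spatial rotation, and it is what forces the front-layer quotient to be by the full $O(2)$ rather than by $SO(2)$. I would therefore check carefully that multiplication by $\eta(\I)$ interchanges the two sphere components of $S^{2n-1}\sqcup S^{2n-1}$ while supplying the reflection, so that $SO(2)\cup SO(2)\,\eta(\I)$ really maps onto the full $O(2)$ of \eqref{eq:armTwoThree}. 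Once this equivariance is in hand, commutativity of every remaining face involving a quotient is automatic from the universal property of the quotient maps, and the three diagrams fit together as claimed.
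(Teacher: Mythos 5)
Your proposal is correct and follows essentially the same route as the paper: the paper's proof is a one\hyphenchar\font=`\- liner observing that the only new content is the well\hyphenchar\font=`\- definedness of the top-right arrow $S^{2n-1}\sqcup S^{2n-1}/O(2)\to \Q P^{n-1}$, justified by the fact that the chosen $O(2)\subset SU(2)=\Sp(1)$ is the stabilizer of the pair of subspheres, which is exactly the equivariance-plus-universal-property point you make at the end. The additional faces you verify (the coordinatewise restriction of $\Hopf$ via \eqref{eq:hopfcoords} and the behavior of $\eta(\I)$) are already established in the paper's diagrams \eqref{eq:armTwo} and \eqref{eq:armTwoThree}, so your extra care there is harmless but not needed.
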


\begin{proof} 
The only thing left to check is that the top right arrow $S^{2n-1} \sqcup S^{2n-1}/O(2) \hookrightarrow \Q P^{n-1}$ is well defined. This follows from the fact that our embedding of $O(2)$ in $SU(2)$ is the stabilizer of this subset of $S^{4n-1} \subset \Q^n$ in the group action of $SU(2) = \Sp(1)$ on $\Q^n$.
\end{proof}

\section{Closed Polygon Spaces and Stiefel Manifolds}

Now that we understand arm spaces from the quaternionic point of view, we turn to closed polygon spaces as subspaces of the arm spaces. It is easiest to see closed polygons in context by defining $\Arm_i(n,\ell)$ to be the subspace of $\Arm_i(n)$ of polygons which fail to close by length $\ell$ and then letting $\Pol_i(n) = \Arm_i(n,0)$. As before $\hPol_3(n) = \Pol_3(n)/SO(3)$ while $\hPol_2(n) = \Pol_2(n)/O(2)$, since we want $\hPol_2(n) \subset \hPol_3(n)$.

We now describe the fiber $\Hopf^{-1}(\Pol_3(n))$ as a subspace of the quaternionic sphere $S^{4n-1}$. To do so, we write the quaternionic $n$-sphere $S^{4n-1}$ as the join $S^{2n-1} \star S^{2n-1}$ of complex $n$-spheres. The join map is given in coordinates by 
\begin{equation}
(u,v,\theta) \mapsto \sqrt{2} (\cos \theta u + \sin \theta v \J)
\end{equation}
where $u$, $v \in \C^{n}$ lie in the unit sphere and $\theta \in [0,\pi/2]$. Now consider the subspace of the quaternionic sphere described by $\{(u,v,\pi/4) \,|\, \left<u,v\right> = 0\}$. This subspace is naturally identified with the Stiefel manifold $V_2(\C^n)$ of Hermitian orthonormal 2-frames $(u,v)$ in $\C^n$, and in fact the subspace metric on $V_2(\C^n)$ agrees with its standard Riemannian metric. It is the inverse image of $\Pol_3(n)$ under $\Hopf$, as shown by Knutson and Hausmann.

\clearpage

\begin{proposition}[\cite{Knutson:2_iyExxE}]
The coordinatewise Hopf map $\Hopf$ takes the Stiefel manifold ${V_2(C^n) \subset \C^n \cross \C^n = \Q^n}$ onto $\Pol_3(n)$. The $SU(2)$ or $Sp(1)$ action on $\Q^n$ preserves $V_2(\C^n)$ (and is the standard action of $SU(2)$ on $V_2(\C^n)$, rotating the two basis vectors in their common plane) and descends to the $SO(3)$ action on $\Pol_3(n)$, leading to a commutative diagram:
\begin{equation}
\begin{diagram}[h=0.35in]
SU(2)   & \rTo   & V_2(\C^n) \subset \Q^n & \rOnto  & V_2(\C^n)/SU(2) \\
\dOnto^{\FHopf}  &        & \dOnto^{\Hopf}  &       & \dOnto                       \\
SO(3)   & \rTo   & \Pol_3(n)  & \rOnto  & \Pol_3(n)/SO(3) = \hPol_3(n)   \\
\end{diagram}
\label{eq:polThree}
\end{equation}
\label{prop:ell}
\end{proposition}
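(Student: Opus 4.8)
The plan is to prove four things: that $\Hopf$ maps $V_2(\C^n)$ into the closed polygons, that it hits all of them, that right multiplication by a unit quaternion is the standard $SU(2)$-action on frames, and that this action intertwines with the $SO(3)$-action on $\Pol_3(n)$ so that~\eqref{eq:polThree} commutes. The engine behind all of this is a single explicit computation of $\Hopf$ on a quaternion split into its complex parts. Writing a point of the radius-$\sqrt 2$ sphere as $\vec q = u + v\J$ with $u, v \in \C^n$ (so $q_i = u_i + v_i\J$), I would expand $\Hopf(q_i) = \overline{q_i}\,\I\, q_i$ using $\I a = a\I$ and $\J a = \bar a\J$ for $a \in \C$ together with the relations among $\I,\J,\K$, to obtain
\begin{equation*}
\Hopf(u_i + v_i\J) = (|u_i|^2 - |v_i|^2)\,\I + 2\,(\overline{u_i} v_i)\,\K,
\end{equation*}
where $\overline{u_i}v_i \in \C$ is multiplied on the right by $\K$ (this specializes correctly to the planar formula~\eqref{eq:hopfcoords} when $u_i,v_i$ are real). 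This is the one genuinely computational step, and it is the crux of the argument; the rest is bookkeeping.

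Given this formula the closure condition becomes transparent. Summing over $i$, the edge set $\Hopf(\vec q)$ closes up exactly when both $\sum_i(|u_i|^2 - |v_i|^2) = 0$ and $\sum_i \overline{u_i}v_i = \langle u, v\rangle = 0$ for the Hermitian inner product. The first equation together with the sphere constraint $|u|^2 + |v|^2 = 2$ forces $|u| = |v| = 1$, and the second is precisely Hermitian orthogonality; hence $\Hopf(\vec q) \in \Pol_3(n)$ if and only if $(u,v)$ is an orthonormal $2$-frame, i.e. $\vec q \in V_2(\C^n)$. This gives $\Hopf(V_2(\C^n)) \subseteq \Pol_3(n)$ directly. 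For surjectivity I would invoke the already-established fact that $\Hopf \co S^{4n-1} \to \Arm_3(n)$ is onto: any $P \in \Pol_3(n) \subseteq \Arm_3(n)$ has a preimage $\vec q$ on the sphere, and running the dictionary backwards shows $\vec q \in V_2(\C^n)$. In fact this argument identifies $\Hopf^{-1}(\Pol_3(n)) = V_2(\C^n)$ exactly.

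For the group action I would compute $q_i p$ for a unit quaternion $p = \alpha + \beta\J$ in the same split coordinates. Expanding and collecting the $\C$- and $\C\J$-parts yields $q_i p = (\alpha u_i - \bar\beta v_i) + (\beta u_i + \bar\alpha v_i)\J$, so the frame $[\,u \mid v\,]$, viewed as an $n\times 2$ matrix, is sent to $[\,u\mid v\,]\,\eta(p)$ with $\eta(p) \in SU(2)$ the Pauli matrix of~\eqref{eq:pauliform}. This is manifestly the standard right $SU(2)$-action on $V_2(\C^n)$, and since $\eta(p)$ is unitary it preserves orthonormality of the frame, so $V_2(\C^n)$ is invariant. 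That this action descends to $SO(3)$ on $\Pol_3(n)$ is immediate from the coordinatewise equivariance $\Hopf(q_ip) = \FHopf(p)\Hopf(q_i)$ established above: the whole polygon is rotated by the single matrix $\FHopf(p) \in SO(3)$, and $\FHopf \co \Sp(1) \to SO(3)$ is onto.

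Finally, commutativity of~\eqref{eq:polThree} follows formally. The left square is exactly the equivariance $\Hopf(\vec q\,p) = \FHopf(p)\,\Hopf(\vec q)$, matching the $SU(2)$-fiber with the $SO(3)$-fiber; since $\Hopf$ carries $SU(2)$-orbits onto $SO(3)$-orbits it descends to a well-defined map $V_2(\C^n)/SU(2) \to \hPol_3(n)$, and the right square then commutes by the universal property of the quotient. The only real work is the opening computation of $\Hopf(u_i + v_i\J)$; I expect the translation of \emph{closure of the polygon} into \emph{Hermitian orthonormality of the frame} to be the conceptual heart and the step most worth writing out carefully, while the equivariance and descent are routine given the results already in hand.
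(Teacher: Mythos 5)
Your proposal is correct and follows essentially the same route as the paper: both reduce the closure condition to the vanishing of $\sum_i\bigl(|u_i|^2-|v_i|^2\bigr)$ and of the Hermitian inner product $\left\langle u,v\right\rangle$ via the complex form of the Hopf map (the paper phrases this in join coordinates $(u,v,\theta)$ with the condition $\theta=\pi/4$, which is your $|u|=|v|=1$), and both obtain surjectivity from the already-established surjectivity onto $\Arm_3(n)$. Your explicit verification that right multiplication by $p=\alpha+\beta\J$ acts on the frame $[\,u\mid v\,]$ by $\eta(p)\in SU(2)$ is a detail the paper leaves to its earlier equivariance discussion, and the small conjugation discrepancy ($\overline{u_i}v_i$ versus $u_i\overline{v_i}$) is immaterial to the orthogonality condition.
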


\begin{proof}
In complex form, the map $\Hopf_\I(q)$ can be written more simply as 
\begin{equation}
\label{eq:coords}
\begin{split}
\Hopf_\I(a + b \J) &= (a\bar{a} - b\bar{b},-i(a\bar{b} - \bar{a}b), a\bar{b} + \bar{a}b) \\
				  &= (|a|^2 - |b|^2, 2\Im(a\bar{b}),2\Re(a\bar{b})) \\
				  &= \I (|a|^2 - |b|^2 + 2 a \bar{b} \J )
\end{split}
\end{equation}
using the identification of $\R^3$ with the imaginary quaternions. This means that the vector connecting the first and last vertices of $P = \Hopf(q_1,\dots,q_n)$ has norm 
\begin{equation}\label{eq:arm}
	\begin{split}
		\left|\sum e_i\right|^2 = \left|\sum \Hopf(q_i)\right|^2 &= \left| \sum 2 |\cos \theta \, u_i|^2 - \sum 2 |\sin \theta \, v_i|^2 + 4 \cos \theta \sin \theta \, \sum \overline{u_i} v_i \J \right|^2 \\
		&=  \left| 2\cos^2 \theta - 2 \sin^2 \theta\right|^2 + \left|4 \cos \theta \sin \theta \left< u, v\right> \right|^2 \\
		&=  \left| 2 \cos 2\theta\right|^2 + \left|2 \sin 2\theta \left< u,v \right> \right|^2 \\
		&=  4 \cos^2 2\theta + 4 \sin^2 2\theta \left| \left< u,v \right> \right|^2.
	\end{split}
	\raisetag{36pt}
\end{equation}
Thus the polygon closes if and only if $\theta = \pi/4$ and $u$, $v$ are orthogonal. 
\end{proof}

We note in passing that $V_2(\C^n)/SU(2)$ is not quite the complex Grassmann manifold $G_2(\C^n)$ of complex 2-planes in $\C^n$. In fact, $V_2(\C^n)/SU(2)$ is a circle bundle over ${G_2(\C^n) = V_2(\C^n)/U(2)}$. Howard, Manon, and Millson~\cite{Howard:2008uy} identified $G_2(\C^n)$ as essentially a covering space of the quotient of the moduli space of \emph{framed} closed polygons in $\R^3$ by the circle action given by simultaneous rotation of all vectors in the frame. 

A corresponding theorem holds for closed planar polygons under the action of $O(2)$ rather than $SO(2)$. In the language of Howard, Manon, and Millson, these are not planar polygons framed in $3$-space, but rather planar polygons framed with respect to the plane.

\clearpage

\begin{proposition}[\cite{Knutson:2_iyExxE}]
The coordinatewise Hopf map $\Hopf$ takes the disconnected manifold ${V_2(\R^n) \sqcup \I V_2(\R^n) \subset \C^n \cross \C^n = \Q^n}$ onto $\Pol_2(n)$. The action of the orthogonal group ${O(2) = SO(2) \cross \eta(\pm\I) \subset SU(2)}$ on $\Q^n$ preserves $V_2(\R^n) \sqcup \I V_2(\R^n)$. The quotient space $(V_2(\R^n) \sqcup \I V_2(\R^n))/O(2)$ is the Grassmann manifold $G_2(\R^n)$ of 2-planes in $\R^n$ and we have a commutative diagram:
\begin{equation}
\begin{diagram}[h=0.35in]
O(2) \subset SU(2)   & \rTo   & V_2(\R^n) \sqcup \I V_2(\R^n) \subset \C^n \cross \C^n = \Q^n & \rOnto  & G_2(\R^n) \\
\dOnto_{\FHopf}  &        & \dOnto^{\Hopf}  &       & \dOnto                       \\
O(2)   & \rTo   & \Pol_2(n)  & \rOnto  & \Pol_2(n)/O(2) = \hPol_2(n)  \\
\end{diagram}
\label{eq:polTwo}
\end{equation}
\end{proposition}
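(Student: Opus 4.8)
The plan is to mirror the proof of \prop{ell}, adapting each step from Hermitian $2$-frames to real ones. Three things must be checked: that $\Hopf$ carries $V_2(\R^n) \sqcup \I V_2(\R^n)$ \emph{onto} $\Pol_2(n)$; that the embedded $O(2) = SO(2) \cross \eta(\pm\I)$ preserves this disjoint union and is carried by $\FHopf$ to the $O(2)$ of rotations and reflections of the plane; and that the quotient is $G_2(\R^n)$ with the square \eqref{eq:polTwo} commuting.

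For surjectivity I would start from the arm-space diagram \eqref{eq:armTwo}, which already exhibits $\Hopf \co S^{2n-1} \sqcup S^{2n-1} \to \Arm_2(n)$ as a surjection, and simply intersect with the closure condition. Parametrizing the first sphere $S^{2n-1} \subset (1 \oplus \J)^n$ by the real join coordinates $q_i = \sqrt{2}(\cos\theta\, u_i + \sin\theta\, v_i \J)$ with $u, v \in \R^n$ unit and $\theta \in [0,\pi/2]$, the coordinate formula \eqref{eq:hopfcoords} lets me repeat the computation \eqref{eq:arm} line for line; the only change is that the Hermitian inner product is replaced by the Euclidean one, giving
\[
\left| \sum e_i \right|^2 = 4\cos^2 2\theta + 4 \sin^2 2\theta \, \left| \langle u, v \rangle \right|^2 .
\]
Hence a polygon from the first sphere closes exactly when $\theta = \pi/4$ and $\langle u, v\rangle = 0$, i.e. when $(u,v)$ is an orthonormal real $2$-frame; this cuts out precisely $V_2(\R^n)$. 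The second sphere is handled identically (or by transporting the computation across the plane-exchanging map $\eta(\I)$), cutting out $\I V_2(\R^n)$. Since these two spheres already surject onto $\Arm_2(n)$, their intersection with the closure locus surjects onto $\Pol_2(n)$.

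Invariance of the action I would check directly. The real rotations $SO(2) \subset SU(2)$ act on a frame $(u,v)$ by right multiplication by $a + b\J$ ($a,b \in \R$), which sends it to $(a u - b v,\, b u + a v)$ --- an orientation-preserving rotation of the frame in its own plane, so orthonormality and hence $V_2(\R^n)$ is preserved (and likewise $\I V_2(\R^n)$). The generator $\eta(\I)$ acts by $q \mapsto q\I$; computing $q_i \I = u_i \I - v_i \K = \I(u_i - v_i \J)$ shows it carries the frame $(u,v) \in V_2(\R^n)$ to $(u,-v) \in \I V_2(\R^n)$, so it interchanges the two components and the union is preserved. Under $\FHopf$ this $O(2)$ goes to rotations and reflections of the $\I \oplus \K$ plane by \eqref{eq:conjugation}, which gives commutativity of the left and bottom faces and lets the quotient map descend.

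The main obstacle is the last claim, the identification of $(V_2(\R^n) \sqcup \I V_2(\R^n))/O(2)$ with the Grassmannian. Quotienting first by $SO(2)$ collapses each Stiefel component to a copy of the Grassmannian of \emph{oriented} $2$-planes; the remaining involution, induced by $\eta(\pm\I)$, then glues the two copies. The delicate point is exactly the orientation bookkeeping recorded above: because $\eta(\I)$ both swaps the components and reverses the orientation of the frame via $(u,v) \mapsto (u,-v)$, one must track carefully which oriented plane in one copy is identified with which in the other before concluding that the quotient is the asserted $G_2(\R^n)$. This is the step where I would be most careful, since a sign error here changes the answer between the oriented and the unoriented Grassmannian.
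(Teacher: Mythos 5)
Your surjectivity and equivariance arguments follow the same route as the paper, whose own proof disposes of those claims in one sentence by combining the closure criterion of Proposition~\ref{prop:ell} with the planar characterization in~\eqref{eq:armTwo}. Your explicit rerun of the computation~\eqref{eq:arm} in real join coordinates, and your verification that right multiplication by $a+b\J$ rotates a frame in its plane while $q \mapsto q\I$ carries $(u,v) \in V_2(\R^n)$ to $(u,-v) \in \I V_2(\R^n)$, are correct and are exactly what the paper's citation compresses. (You might also echo the paper's caveat that, as with the mixed spheres, this disjoint union is not the full preimage of $\Pol_2(n)$ in $V_2(\C^n)$.)

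The genuine gap is the step you flag and then defer: the identification of $(V_2(\R^n)\sqcup\I V_2(\R^n))/O(2)$ with $G_2(\R^n)$. The paper's proof is silent on this clause, so you cannot lean on it, and your own bookkeeping, pushed one step further, does not produce the unoriented Grassmannian. Identify the plane $1\oplus\J$ with $\C$, so that a frame $(u,v)$ becomes $z = u + iv \in \C^n$ with $z\cdot z = 0$ and $|z|^2 = 2$; then $SO(2)$ acts by unit complex scalars, each component modulo $SO(2)$ is the projective quadric $\{[z] : z\cdot z = 0\}$, i.e.\ the Grassmannian $\widetilde{G}_2(\R^n)$ of \emph{oriented} $2$-planes, and your computation shows that $\eta(\I)$ sends $(u,v)$ in the first component to $(u,-v)$ in the second, i.e.\ $[z] \mapsto [\bar{z}]$. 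Gluing two copies of $\widetilde{G}_2(\R^n)$ along a bijection between them returns a single copy of $\widetilde{G}_2(\R^n)$: the orbits of $(u,v)$ and of $(u,-v)$, both taken in the first component, remain disjoint, because no group element acts \emph{within} a component by an orientation-reversing change of frame. So the orbit space your argument actually computes is the oriented double cover of the asserted $G_2(\R^n)$, and to finish you must either exhibit a further identification that collapses the two orientations of each plane or conclude that the quotient should be the oriented Grassmannian. This is not a removable sign ambiguity; it is the substance of the final claim, and your proposal leaves it unresolved.
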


\begin{proof} 
This follows by combining of our characterization of closed polygons from Proposition~\ref{prop:ell} and our characterization of planar polygons from~\eqref{eq:armTwo}. We note that as above, this is not the entire inverse image of $\Pol_2(n)$ under the Hopf map. Just as ``mixed'' spheres with some quaternionic coordinates in $1 \oplus \J$ and some in $\I \oplus \K$ map to $\Arm_2(n)$, ``mixed'' frames $(a,b) \in V_2(\C^n)$ with some pairs $(a_i,b_i)$ purely real and others purely imaginary map to $\Pol_2(n)$. We will not need these additional frames to define our measure below. 
\end{proof}

As before, the inclusion of planar polygons into space polygons can now be extended to a large commutative diagram:
\begin{proposition}
The diagrams~\eqref{eq:polThree} and~\eqref{eq:polTwo} can be joined by inclusions to form a large commutative diagram:
\newarrow{Is}{<}{-}{-}{-}{>}
\begin{equation*}
\begin{diagram}[w=0.45in,h=0.35in,tight]
SU(2)       & \rTo   &             &      & V_2(\C^n) \subset \Q^n & \rTo   &           &        & V_2(\C^n)/SU(2)    &        & \\
             & \luTo  &     &      & \vLine                & \luTo  &           &        & \vLine             & \luTo  & \\
\dOnto   &        & O(2)       &      & \HonV                 & \rTo   & V_2(\R^n) \sqcup \I V_2(\R^n) & \rTo   & \HonV              &        & G_2(\R^n) \\
             &        & \dOnto^{\FHopf}  &      & \dOnto_{\Hopf}          &        &           &        & \dOnto               &        &   \\
SO(3)        & \hLine & \VonH       & \rTo & \Pol_3(n)             & \hLine & \VonH     & \rOnto   & \hPol_3(n)         &        & \dOnto \\
             & \luTo  &     &      &                       & \luTo& \dOnto>{\Hopf}&      &                    & \luTo  & \\
             &        & O(2)        &      & \rTo                  &        & \Pol_2(n) &        & \rOnto               &        & \hPol_2(n) \\
\end{diagram}
\end{equation*}
\end{proposition}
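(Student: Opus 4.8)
The plan is to mimic the proof of the analogous arm-space proposition built on \eqref{eq:armThree} and \eqref{eq:armTwo}: the two constituent squares \eqref{eq:polThree} and \eqref{eq:polTwo} are already known to commute, so the only genuinely new content is to check that the diagonal inclusions joining the planar layer to the spatial layer are well defined and that the newly formed faces of the cube commute. I would first dispose of the routine compatibilities. At the level of model spaces the inclusion $V_2(\R^n)\sqcup\I V_2(\R^n)\subset V_2(\C^n)$ is a literal inclusion of subsets of $\Q^n$, since a real orthonormal $2$-frame is in particular a Hermitian orthonormal one (and the same holds after multiplying by $\I$). At the polygon level, $\Pol_2(n)\subset\Pol_3(n)$ is the inclusion obtained by fixing the $\I\oplus\K$ plane inside $\R^3$, exactly as in \eqref{eq:armTwoThree}. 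Because the coordinatewise $\Hopf$ map of \eqref{eq:polTwo} is just the restriction of the one in \eqref{eq:polThree}, the vertical $\Hopf$ arrows automatically intertwine these inclusions, so the front rectangular face commutes on the nose.

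The one map deserving real attention is the induced inclusion of quotients $G_2(\R^n)\hookrightarrow V_2(\C^n)/SU(2)$, together with its partner $\hPol_2(n)\hookrightarrow\hPol_3(n)$, which is forced once the former is understood. Since $O(2)$ is embedded in $SU(2)$ as in \eqref{eq:armTwo}, every $O(2)$-orbit is contained in an $SU(2)$-orbit and the map on quotients is certainly well defined; the substance is to show it is injective, i.e.\ that two real frames lying in distinct $G_2(\R^n)$-classes are not identified after passing to $V_2(\C^n)/SU(2)$.

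As in the arm case, the clean route to injectivity is to prove that the copy of $O(2)$ sitting in $SU(2)$ is \emph{exactly} the stabilizer of the subset $V_2(\R^n)\sqcup\I V_2(\R^n)$ under the $SU(2)$-action. Granting this, the intersection of any single $SU(2)$-orbit with $V_2(\R^n)\sqcup\I V_2(\R^n)$ is precisely one $O(2)$-orbit, which is exactly what injectivity requires. I expect this stabilizer computation to be the main (indeed the only) obstacle. Writing the $SU(2)$-action in the form $(u,v)\mapsto(\alpha u-\bar\beta v,\,\beta u+\bar\alpha v)$ for a unit quaternion $\alpha+\beta\J$, one checks directly that a real frame $u,v\in\R^n$ maps back into $V_2(\R^n)$ precisely when $\alpha,\beta\in\R$ (the subgroup $SO(2)$) and into $\I V_2(\R^n)$ precisely when $\alpha,\beta$ are purely imaginary (the coset $\eta(\I)\,SO(2)$); together these exhaust $O(2)$, and no other group element preserves the subset. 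With the stabilizer identified, confirming that the remaining faces of the cube commute is pure bookkeeping, completing the proof.
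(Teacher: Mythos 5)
Your proposal is correct and follows essentially the same route the paper takes: the paper actually states this proposition without proof, but its proof of the analogous arm-space proposition reduces everything to exactly the point you isolate, namely that the embedded copy of $O(2)$ in $SU(2)$ is the stabilizer of $V_2(\R^n)\sqcup\I V_2(\R^n)$, so that $G_2(\R^n)\hookrightarrow V_2(\C^n)/SU(2)$ is well defined and injective. Your explicit verification that a real (resp.\ purely imaginary) frame is carried back into the subset only by elements with $\alpha,\beta$ real (resp.\ purely imaginary) is the pointwise form of that stabilizer claim, which is precisely what injectivity needs.
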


\clearpage

Finally, we note that the inclusion of closed polygons into arm space generates yet another set of useful commutative diagrams. For space polygons, we have
\newarrow{Is}{<}{-}{-}{-}{>}
\begin{equation*}
\begin{diagram}[w=0.45in,h=0.35in,tight]
\Sp(1)       & \rTo   &             &      & S^{4n-1} \subset \Q^n & \rTo   &           &        & \Q P^{n-1}    &        & \\
             & \luIs^{\cong}  &     &      & \vLine                & \luTo  &           &        & \vLine             & \luTo  & \\
\dTo^{\pi}   &        & SU(2)       &      & \HonV                 & \rTo   & V_2(\C^n) & \rTo   & \HonV              &        & V_2(\C^n)/SU(2) \\
             &        & \dTo^{\pi}  &      & \dTo^{\Hopf}          &        &           &        & \dTo               &        &   \\
SO(3)        & \hLine & \VonH       & \rTo & \Arm_3(n)             & \hLine & \VonH     & \rTo   & \hArm_3(n)         &        & \dTo \\
             & \luIs^{\cong}  &     &      &                       & \luTo& \dTo>{\Hopf}&      &                    & \luTo  & \\
             &        & SO(3)       &      & \rTo                  &        & \Pol_3(n) &        & \rTo               &        & \hPol_3(n) \\
\end{diagram}
\end{equation*}
while there is a corresponding diagram (not shown) for planar polygons. Of course, we could combine the two into a single mighty diagram connecting all two dozen spaces at hand, but we refrain out of consideration for the reader.

\section{Symmetric Measures on Polygon Spaces}

We now see the model spaces referred to in the introduction. The Hopf map now maps four standard Riemannian manifolds to four polygon spaces:
\newarrow{Ishaft}{C}{-}{-}{-}{-}
\begin{equation*}
\begin{diagram}[width=0.45in,height=0.35in,tight]
S^{2n-1} \sqcup S^{2n-1}  & \rTo   &                               &      & S^{4n-1} \subset \Q^n &          &              \\
                          & \luTo  &                               &      & \vLine                & \luTo  &              \\
\dTo^{\Hopf}              &          & V_2(\R^n) \sqcup \I V_2(\R^n) &      & \HonV                 & \rTo   & V_2(\C^n)    \\
                          &          & \dTo^{\Hopf}                  &      & \dTo^{\Hopf}          &          &              \\
\Arm_2(n)                 & \hLine & \VonH                         &\rTo& \Arm_3(n)             &          &              \\
                          & \luTo  &                               &      &                       & \luTo  & \dTo>{\Hopf} \\
                          &          & \Pol_2(n)                     &      & \rTo                &          & \Pol_3(n)    \\
\end{diagram}
\end{equation*}
We can now define probability measures on the polygon spaces by pushing forward measures on the model spaces. While we are free to choose any measure in this construction, since each of the model spaces is a symmetric space it is natural to choose to push forward Haar measure. Of course, this is also the measure defined by the standard Riemannian metrics on these spaces.
\begin{definition}
We define the \emph{symmetric measureŒ} $\mu$ on our polygon spaces by 
\begin{align*}
\text{for $U \subset \Arm_3(n)$,} \quad \mu(U) &= \frac{1}{\Vol S^{4n-1}} \int_{\Hopf^{-1}(U)} \dVol_{S^{4n-1} \subset \Q^n}, \\ 
\text{for $U \subset \Pol_3(n)$,} \quad \mu(U) &= \frac{1}{\Vol V_2(\C^n)} \int_{\Hopf^{-1}(U)} \dVol_{V_2(\C^n)}, \\
\text{for $U \subset \Arm_2(n)$,} \quad \mu(U) &= \frac{1}{\Vol (S^{2n-1} \sqcup S^{2n-1})} \int_{\Hopf^{-1}(U)} \dVol_{S^{2n-1} \sqcup S^{2n-1} \subset \Q^n}, \\ 
\text{for $U \subset \Pol_2(n)$,} \quad \mu(U) &= \frac{1}{\Vol (V_2(\R^n) \sqcup \I V_2(\R^n))} \int_{\Hopf^{-1}(U)} \dVol_{V_2(\R^n) \sqcup \I V_2(\R^n)}. 
\end{align*}
\label{def:symmeasure}
\end{definition}
\vspace{-0.1in}
Since each of these manifolds has a transitive group of isometries, it will prove relatively easy to integrate over these spaces. For instance, our measure on $\Arm_3(n)$ is preserved by the action of the quaternionic unitary group $Sp(n)$ on $S^{4n-1}$, while our measure on $\Pol_3(n)$ is preserved by the action of $U(n)$ on $V_2(\C^n)$. Our measure on $\Arm_2(n)$ is preserved by the action of $U(n)$ on each complex sphere, as well as by the $\Z/2\Z$ action exchanging them, while our measure on $\Pol_2(n)$ is preserved by the $O(n)$ action on each $V_2(\R^n)$, as well as by the $\Z/2\Z$ action exchanging them.

We note that these measures push forward to corresponding measures on the smaller spaces $\hPol_i(n)$ and $\hArm_i(n)$. This fact turns out to be relatively unimportant for computing expectations, since it seems easier to integrate over the larger spaces. The real importance of this construction is likely to be theoretical: any function on plane polygons which is invariant under the full Euclidean group $O(2) \cross \R^2$ now lifts to a function on $G_2(\R^n)$, while any function on space polygons which is invariant under orientation preserving isometries $SO(3) \cross \R^3$ now lifts to a function on $V_2(\C^n)/SU(2)$. This seems potentially fascinating! For instance, what are the properties of the writhing number as a map $\operatorname{Writhe} \co V_2(\C^n)/SU(2) \rightarrow \R$?

\section{Moments of the Edgelength Distribution on Arm and Polygon Space}
\label{sec:moments}

Since the Hopf map squares norms, given a point $(q_1, \dots, q_n)$ in the quaternionic $n$-sphere $S^{4n-1}$, the edges of the corresponding polygon have lengths $|q_1|^2, \dots, |q_n|^2$. We now compute the moments of the edgelength distribution on arm space and on polygon space using a formula of Lord~\cite{Lord:1954wh} relating the moments $\mu_2$, $\mu_4$, $\dots$ of a spherical distribution on $k$-space to the moments $\nu_2$, $\nu_4$, \dots of its projection onto an $l$-dimensional subspace:
\begin{equation*}
\frac{\mu_2}{k} = \frac{\nu_2}{l}, \quad \frac{\mu_4}{k(k+2)} = \frac{\nu_4}{l(l+2)}, \quad \frac{\mu_6}{k(k+2)(k+4)} = \frac{\nu_6}{l(l+2)(l+4)}, \dots
\end{equation*}

This can be packaged into the following general formula either by doing some arithmetic on the above or by integrating~\cite[Equation (8)]{Lord:1954p9280}:
\[
	\frac{1}{2}\mu_{2p}\Beta(p,\nicefrac{k}{2}) = \frac{1}{2}\nu_{2p}\Beta(p,\nicefrac{l}{2}),
\]
where $\Beta$ is the Euler beta function.

We can now easily compute moments of the edgelength distribution on our arm and polygon spaces. We will later give an explicit probability density function for these distributions in Proposition~\ref{prop:pdfs}.

\begin{proposition}\label{prop:moments}
	The moments of the distribution of an edgelength $|e_i|$ are
	\begin{align*}
		E(|e_i|^p,\Arm_3(n)) & = 2^p\frac{\Beta(p,2n)}{\Beta(p,2)},\quad  E(|e_i|^p,\Pol_3(n)) = \frac{\Beta(p,n)}{\Beta(p,2)} \\
		E(|e_i|^p,\Arm_2(n)) & = 2^p \frac{\Beta(p,n)}{\Beta(p,1)} , \quad \ \  E(|e_i|^p,\Pol_2(n)) = \frac{\Beta(p,\nicefrac{n}{2})}{\Beta(p,1)}.
	\end{align*}
\end{proposition}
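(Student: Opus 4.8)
The plan is to reduce all four computations to a single fact about uniform measures on round spheres and then feed it into Lord's formula. Because the Hopf map squares norms, the $i$-th edgelength is $|e_i| = |q_i|^2$, so $E(|e_i|^p) = E(|q_i|^{2p})$ is precisely the $2p$-th radial moment of the projection of the model-space point onto its $i$-th quaternionic coordinate block. If in each case I can realize $q_i$ as the orthogonal projection onto an $l$-dimensional subspace of a point drawn uniformly from a round sphere $S^{k-1}$ of radius $R$, then---since the uniform measure on $S^{k-1}$ has all radial moments equal to $\mu_{2p} = R^{2p}$---Lord's formula yields at once
\[
E(|e_i|^p) = \nu_{2p} = R^{2p}\,\frac{\Beta(p,\nicefrac{k}{2})}{\Beta(p,\nicefrac{l}{2})}.
\]
Everything then comes down to reading off the triple $(R,k,l)$ for each space.

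The two arm spaces are immediate. For $\Arm_3(n)$ the model point is uniform on the radius-$\sqrt2$ sphere $S^{4n-1}\subset\Q^n = \R^{4n}$ and $q_i\in\Q = \R^4$ is its projection onto the $i$-th coordinate quaternion, so $(R,k,l) = (\sqrt2,4n,4)$ and the formula returns $2^p\,\Beta(p,2n)/\Beta(p,2)$. For $\Arm_2(n)$ the point is uniform on one of the radius-$\sqrt2$ spheres $S^{2n-1}$ lying in $(1\oplus\J)^n = \R^{2n}$, and $q_i$ lies in the $2$-real-dimensional plane $1\oplus\J$, so $(R,k,l) = (\sqrt2,2n,2)$ and the formula returns $2^p\,\Beta(p,n)/\Beta(p,1)$. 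By symmetry the two spheres of the disjoint union give the same edgelength law, and the $\Z/2\Z$ action exchanging them (multiplication by $\I$) preserves every $|e_i| = |q_i|^2$.

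The polygon spaces need one more idea, which is where the genuine content lies: a Stiefel frame is not a single sphere point, so I must extract a sphere projection from it. For $\Pol_3(n)$ I write the uniform frame $(u,v)\in V_2(\C^n)$ as the first two columns of a Haar-random $U\in U(n)$; because $V_2(\C^n) = U(n)/U(n-2)$ with its invariant (equivalently subspace) metric, this pushes Haar measure forward to the measure of \defn{symmeasure}. Then $|q_i|^2 = |u_i|^2+|v_i|^2 = |U_{i1}|^2+|U_{i2}|^2$ is the squared norm of the projection of the $i$-th row of $U$ onto its first two complex coordinates. Every row of a Haar unitary matrix is marginally uniform on the unit sphere $S^{2n-1}\subset\C^n$, so this is the projection of a uniform unit-sphere point onto $\C^2 = \R^4$, giving $(R,k,l) = (1,2n,4)$ and $E(|e_i|^p) = \Beta(p,n)/\Beta(p,2)$. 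The case $\Pol_2(n)$ is identical after replacing $U(n)$ by $O(n)$ and $V_2(\C^n) = U(n)/U(n-2)$ by $V_2(\R^n) = O(n)/O(n-2)$: a uniform $(u,v)$ is the first two columns of a Haar orthogonal matrix, each row is marginally uniform on $S^{n-1}\subset\R^n$, and $|q_i|^2 = u_i^2+v_i^2$ is its projection onto $\R^2$, so $(R,k,l) = (1,n,2)$ and $E(|e_i|^p) = \Beta(p,\nicefrac{n}{2})/\Beta(p,1)$. As before the second component $\I V_2(\R^n)$ contributes identically.

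The step I expect to be the main obstacle is exactly this reduction in the Stiefel cases: showing that the marginal law of a single column $(u_i,v_i)$ of a uniformly random $2$-frame matches that of the first two coordinates of a uniform sphere point. The clean route around it is the ``complete the frame to a Haar-random group element'' argument above, combined with the standard fact that a single row of Haar measure on $U(n)$ or $O(n)$ is marginally uniform on the sphere. Once that is in hand, Lord's formula handles all remaining bookkeeping, and the four beta-function values follow uniformly from the four triples $(R,k,l) = (\sqrt2,4n,4)$, $(1,2n,4)$, $(\sqrt2,2n,2)$, $(1,n,2)$.
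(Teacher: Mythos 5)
Your proposal is correct, and for the arm spaces it is exactly the paper's argument: project the uniform measure on the radius-$\sqrt2$ sphere onto one quaternionic (resp.\ complex) coordinate and apply Lord's formula with $(R,k,l)=(\sqrt2,4n,4)$ or $(\sqrt2,2n,2)$. For the polygon spaces, however, you take a genuinely different route through the same key step. The paper never invokes the marginal law of a \emph{row} of a Haar-random group element; instead it first observes that the single column $a$ of a uniform frame is uniform on the unit $S^{2n-1}$, so the marginal of $a_i\in\C$ has known moments $\Beta(p,n)/\Beta(p,1)$, and then uses the right $U(2)$-invariance of the Stiefel measure to conclude that the joint law of $(a_i,b_i)$ is spherically symmetric on $\C^2=\R^4$, which lets it run Lord's formula \emph{backwards} from the known $\C^1$ moments to the unknown $\R^4$ moments. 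Your ``complete the frame to a Haar-random element of $U(n)$ (resp.\ $O(n)$) and use that each row is uniform on the unit sphere'' argument replaces this two-step forward-then-backward computation with a single forward application of Lord's formula from $S^{2n-1}$ (resp.\ $S^{n-1}$) to $\R^4$ (resp.\ $\R^2$); it is arguably cleaner and makes all four cases run on the identical template $(R,k,l)$, at the cost of importing the (standard, but unproved here) facts that Haar measure on $U(n)$ pushes forward to the invariant Stiefel measure under truncation to two columns and that a row of a Haar unitary or orthogonal matrix is marginally uniform on the sphere --- facts which follow from the same right-invariance considerations the paper uses for its $U(2)$ symmetry, so nothing essential is missing.
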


Using Stirling's formula, we get the following approximations for large $n$:
\begin{align*}
	E(|e_i|^p,\Arm_3(n)) & \simeq \frac{(p+1)!}{n^p} \simeq E(|e_i|^p,\Pol_3(n)) \\
	E(|e_i|^p,\Arm_2(n)) & \simeq \frac{2^p p!}{n^p}  \simeq E(|e_i|^p,\Pol_2(n))
\end{align*}

% \begin{proposition}
% The first moments of the distribution of an edgelength $|e_i|$ are 
% \begin{align*}
% E(|e_i|,\Arm_3(n)) &= \frac{2}{n}, \quad E(|e_i|,\Pol_3(n)) = \frac{2}{n}, \\
% E(|e_i|,\Arm_2(n)) &= \frac{2}{n}, \quad E(|e_i|,\Pol_2(n)) = \frac{2}{n}. 
% \end{align*}
% The second moments of $|e_i|$ are
% \begin{align*}
% E(|e_i|^2,\Arm_3(n)) &= \frac{6}{n(n+\nicefrac{1}{2})}, \quad E(|e_i|^2,\Pol_3(n)) = \frac{6}{n(n+1)}, \\
% E(|e_i|^2,\Arm_2(n)) &= \frac{8}{n(n+1)}, \quad E(|e_i|^2,\Pol_2(n)) = \frac{8}{n(n+2)}. 
% \end{align*}
% The third moments of $|e_i|$ are
% \begin{align*}
% E(|e_i|^3,\Arm_3(n)) &= \frac{24}{n(n+\nicefrac{1}{2})(n+1)}, \quad E(|e_i|^3,\Pol_3(n)) = \frac{24}{n(n+1)(n+2)}, \\
% E(|e_i|^3,\Arm_2(n)) &= \frac{48}{n(n+1)(n+2)}, \quad E(|e_i|^3,\Pol_2(n)) = \frac{48}{n(n+2)(n+4)}.  
% \end{align*}
% Further moments can be easily computed as below.
% \label{prop:moments}
% \end{proposition}

\begin{proof}[Proof of Proposition~\ref{prop:moments}]
	The $p$th moment of edgelength for space arms is the $2p$th moment $\nu_{2p}$ of the distribution on $\Q^1 = \R^4$ obtained by projecting the uniform measure on the quaternionic $(4n-1)$-sphere of radius $\sqrt{2}$ onto quaternionic $1$-space. Since the measure on the sphere has $2p$th moment $\sqrt{2}^{2p} = 2^p$, according to Lord's formula we have
	\begin{equation*}
		\frac{1}{2} 2^p \Beta(p,2n) = \frac{1}{2} \nu_{2p} \Beta(p,2),
	\end{equation*}
	so
	\[
		\nu_{2p} = 2^p \frac{\Beta(p,2n)}{\Beta(p,2)}.
	\]
	
	For the $\Arm_2(n)$ spaces, we are projecting from the $\sqrt{2}$-sphere in $\C^n$ to $\C^1$, so the calculations become
	\[
		\frac{1}{2}2^p\Beta(p,n) = \frac{1}{2}\nu_{2p}\Beta(p,1) \implies \nu_{2p} = 2^p \frac{\Beta(p,n)}{\Beta(p,1)}.
	\]
	
	Repeating these calculations on the Stiefel manifolds representing $\Pol_3(n)$ and $\Pol_2(n)$ requires only a little more work. Given a pair $(a,b) \in V_2(\C^n)$ representing a polygon in $\Pol_3(n)$, the edgelength is given by $|e_i| = |a_i|^2 + |b_i|^2$, or the squared norm of the vector $(a_i,b_i) \in \C^2$. If $(a,b)$ is uniformly distributed on the Stiefel manifold, the vector $a$ is uniformly distributed on the unit $S^{2n-1}$, so the projection from $(a,b) \mapsto a_i \in \C$ has $2p$th moment $\nu_{2p}$ obeying
	\[
		\frac{1}{2} \cdot 1 \cdot \Beta(p,n) = \frac{1}{2}\nu_{2p} \Beta(p,1) \implies \nu_{2p} = \frac{\Beta(p,n)}{\Beta(p,1)}.
	\]
	
	On the other hand, the measure on $V_2(\C^n)$ is $U(2)$ invariant, so the projection $(a,b) \mapsto (a_i,b_i) \in \C^2$ projects the uniform measure on $V_2(\C^n)$ to a spherically symmetric measure on $\C^2 = \R^4$. The projection from $\C^2$ to $\C^1$ given by $(a_i,b_i) \mapsto a_i$ takes this unknown measure to the measure on $\C$ whose moments we computed above. Thus, we can apply Lord's formula again to solve backwards for the moments $\mu_{2p}$ of edgelength:
	\[
		\frac{1}{2}\mu_{2p} \Beta(p,2) = \frac{1}{2} \left(\frac{\Beta(p,n)}{\Beta(p,1)}\right)\Beta(p,1) \implies \mu_{2p} = \frac{\Beta(p,n)}{\Beta(p,2)}.
	\]
	
	For planar polygons, the calculations are similar, but the moments of the projected measure on $a_i \in \R$ are 
	\begin{equation*}
	\nu_{2p} = \frac{\Beta(p,\nicefrac{n}{2})}{\Beta(p,\nicefrac{1}{2})},
	\end{equation*}
	and the same ``backwards'' application of Lord's formula works as above to solve for the moments of the unknown distribution on $\C = \R^2$ from these moments of the distribution on $\R$.
\end{proof}

Notice that in all cases the first moment of edgelength is $\nicefrac{2}{n}$, as we expect since the sum of the edgelengths is always 2. In the ensuing sections we will repeatedly use the second moments of edgelength, which we collect in the following corollary.

\begin{corollary}\label{cor:moments}
	The second moments of the distribution of an edgelength $|e_i|$ are
	\begin{align*}
	E(|e_i|^2,\Arm_3(n)) &= \frac{6}{n(n+\nicefrac{1}{2})}, \quad E(|e_i|^2,\Pol_3(n)) = \frac{6}{n(n+1)}, \\
	E(|e_i|^2,\Arm_2(n)) &= \frac{8}{n(n+1)}, \quad E(|e_i|^2,\Pol_2(n)) = \frac{8}{n(n+2)}. 
	\end{align*}
\end{corollary}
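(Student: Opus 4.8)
The plan is to specialize the general moment formulas from Proposition~\ref{prop:moments} to the case $p = 2$ and then evaluate the resulting Euler beta functions in closed form. Each of the four second moments is already written there as a ratio of two beta values, so the entire argument reduces to a short computation once the relevant beta values are recorded.

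First I would isolate the single identity that does most of the work: for any $m > 0$,
\[
\Beta(2, m) = \frac{\Gamma(2)\Gamma(m)}{\Gamma(m+2)} = \frac{\Gamma(m)}{(m+1)\,m\,\Gamma(m)} = \frac{1}{m(m+1)},
\]
using $\Gamma(2) = 1$ together with the recursion $\Gamma(m+2) = (m+1)\,m\,\Gamma(m)$. In particular this yields the two denominator values $\Beta(2,2) = \tfrac16$ and $\Beta(2,1) = \tfrac12$, which appear across all four cases.

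Then I would substitute $p = 2$ into each formula of Proposition~\ref{prop:moments} and apply the identity above. For space arms this gives $E(|e_i|^2,\Arm_3(n)) = 2^2\,\Beta(2,2n)/\Beta(2,2) = 4\cdot 6/\bigl(2n(2n+1)\bigr) = 12/\bigl(n(2n+1)\bigr)$, which matches $6/\bigl(n(n+\nicefrac{1}{2})\bigr)$ after rewriting $2n(2n+1) = 4n(n+\nicefrac{1}{2})$. The space polygon case is $E(|e_i|^2,\Pol_3(n)) = \Beta(2,n)/\Beta(2,2) = 6/\bigl(n(n+1)\bigr)$, and the planar arm case is $E(|e_i|^2,\Arm_2(n)) = 4\,\Beta(2,n)/\Beta(2,1) = 8/\bigl(n(n+1)\bigr)$. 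The planar polygon case is the only one involving a half-integer argument: since $\Beta(2,n/2) = 4/\bigl(n(n+2)\bigr)$, we get $E(|e_i|^2,\Pol_2(n)) = \Beta(2,n/2)/\Beta(2,1) = 8/\bigl(n(n+2)\bigr)$.

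There is no genuine obstacle here — the statement is a direct corollary of Proposition~\ref{prop:moments}. The only things to watch are bookkeeping with the factors of $2$ coming from the radius-$\sqrt{2}$ normalization (already folded into Proposition~\ref{prop:moments}) and the cosmetic rewriting of $2n(2n+1)$ as $4n(n+\nicefrac{1}{2})$ in the space-arm case so that the answer appears in the stated form.
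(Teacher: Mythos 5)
Your proposal is correct and matches the paper's (implicit) derivation exactly: the corollary is stated as an immediate consequence of Proposition~\ref{prop:moments} with $p=2$, and your evaluation of $\Beta(2,m) = 1/\bigl(m(m+1)\bigr)$ together with the four substitutions reproduces all four stated values. The only detail worth the care you gave it is the half-integer case $\Beta(2,\nicefrac{n}{2})$ and the rewriting of $2n(2n+1)$ as $4n(n+\nicefrac{1}{2})$, both of which you handled correctly.
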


There is one more expectation which we will need below. We have written $\Arm_i(n)$ as the union of the spaces $\Arm_i(n,\ell)$ of arms which fail to close by distance $\ell$. We now show:

\begin{proposition}
The expected value of the squared failure to close $\ell^2 = |\sum e_i|^2$ on $\Arm_i(n)$ is given by
\begin{equation*}
E(\ell^2,\Arm_i(n)) = n E(|e_j|^2,\Arm_i(n)).
\end{equation*}
\label{prop:ftc}
\end{proposition}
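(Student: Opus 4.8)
The plan is to expand the squared failure-to-close, reduce the claim to the vanishing of the off-diagonal cross terms, and kill those cross terms using the per-edge rotational symmetry of the symmetric measure. First I would write, for any configuration with edges $e_1, \dots, e_n$,
\[
\ell^2 = \left|\sum_i e_i\right|^2 = \sum_i |e_i|^2 + \sum_{i \neq j} \langle e_i, e_j\rangle ,
\]
so that by linearity of expectation
\[
E(\ell^2,\Arm_i(n)) = \sum_i E(|e_i|^2,\Arm_i(n)) + \sum_{i \neq j} E(\langle e_i, e_j\rangle,\Arm_i(n)).
\]
Since coordinate permutations of $\Q^n$ (resp.\ $\C^n$) are isometries preserving the model sphere and its Haar measure, every edgelength has the same distribution, so the first sum is exactly $n\,E(|e_j|^2,\Arm_i(n))$ — this is already implicit in Proposition~\ref{prop:moments}. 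It therefore remains to show that each cross term $E(\langle e_i, e_j\rangle)$ with $i \neq j$ vanishes.

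The key observation is that the symmetric measure is invariant under rotating a single edge while fixing all the others. For space arms, coordinatewise right-multiplication $(q_1,\dots,q_n) \mapsto (q_1 p_1, \dots, q_n p_n)$ by a tuple of unit quaternions is an orthogonal transformation of $\Q^n = \R^{4n}$ preserving $S^{4n-1}$, hence measure-preserving; by~\eqref{eq:conjugation} it sends $e_k = \Hopf(q_k)$ to $\FHopf(p_k)\,e_k$, i.e.\ it rotates each edge independently by an arbitrary element of $SO(3)$. Taking $p_k = 1$ for $k \neq i$ gives, for every $R \in SO(3)$, a measure-preserving map $\Phi_R$ inducing $e_i \mapsto R e_i$ and $e_k \mapsto e_k$ for $k \neq i$, so $E(\langle e_i, e_j\rangle) = E(\langle R e_i, e_j\rangle)$ for all $R$. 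Averaging this identity over Haar measure on $SO(3)$ and exchanging the order of integration (justified since the integrand is bounded), the inner pointwise average $\int_{SO(3)} \langle R e_i, e_j\rangle \, dR = \langle (\int_{SO(3)} R\, dR)\, e_i, e_j\rangle$ vanishes, because $\int_{SO(3)} R\, dR$ is the zero matrix: its image is the unique rotation-invariant vector, namely $0$. Thus $E(\langle e_i, e_j\rangle) = 0$, and the proposition follows for $\Arm_3(n)$.

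For planar arms the same argument runs with $SO(3)$ replaced by $SO(2)$: right-multiplying the $i$-th coordinate by the unit quaternion $\cos\alpha + \sin\alpha\,\J$ preserves both the $1 \oplus \J$ and $\I \oplus \K$ planes, hence preserves each complex sphere $S^{2n-1}$ and the measure, and by~\eqref{eq:hopfcoords} it rotates the planar edge $e_i = \I\,\overline{z_i}^2$ by the angle $-2\alpha$. As $\alpha$ ranges over $[0,2\pi)$ this sweeps out all of $SO(2)$, so averaging again annihilates the cross term. The step I expect to be the main obstacle is precisely this symmetry verification — confirming that coordinatewise right-multiplication acts as an independent isometric rotation of each edge, and, in the planar case, that the angle-doubling still covers the full circle so the rotational average is genuinely zero. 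Once that is in hand, the vanishing of the cross terms is immediate and the formula $E(\ell^2,\Arm_i(n)) = n\,E(|e_j|^2,\Arm_i(n))$ drops out.
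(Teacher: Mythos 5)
Your proof is correct, and it uses the same decomposition as the paper --- expand $\ell^2 = \sum_i |e_i|^2 + \sum_{i\neq j}\langle e_i, e_j\rangle$ and reduce to showing the cross terms have zero expectation --- but the mechanism you use to kill the cross terms is genuinely different. The paper exhibits a single discrete isometry of the model sphere: writing $q = a + b\I + c\J + d\K$, the substitution $q_i \mapsto q_i' = c - d\I - a\J + b\K$ satisfies $\Hopf(q_i') = -\Hopf(q_i)$, so it negates $\langle e_i, e_j\rangle$ pointwise while preserving the measure, and the expectation is its own negative. You instead use the full continuous per-edge symmetry: right-multiplying the $i$th quaternionic coordinate by a unit quaternion $p$ is measure-preserving and sends $e_i \mapsto \FHopf(p)\,e_i$ while fixing the other edges, and averaging $\langle R e_i, e_j\rangle$ over Haar measure on $SO(3)$ (or, with the angle-doubling you correctly track, over $SO(2)$ in the planar case) annihilates the term since $\int R\,\mathrm{d}R = 0$. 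Both arguments are sound; the paper's is more economical (one explicit sign-reversing map, no Fubini step), while yours is more conceptual and establishes the stronger and reusable fact that the joint edge distribution is invariant under independent rotations of individual edges, from which the vanishing of \emph{any} odd functional of a single edge follows, not just the cross term at hand.
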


\begin{proof}
We can expand $\ell^2$ as $\sum |e_i|^2 + 2 \sum \left<e_i,e_j\right>$ and compute the expectation of each term separately. Now
\begin{equation*}
E(\left<e_i,e_j\right>,\Arm_3(n)) = E(\left<\Hopf(q_i),\Hopf(q_j)\right>,S^{4n-1}),
\end{equation*}
and if we write $q = a + b\I + c\J + d\K$, then $q' = c - d\I -a\J + b\K$ has $\Hopf(q') = -\Hopf(q)$. This is easily checked by recalling that \begin{equation*}
\Hopf(a + b\I + c\J + d\K) = (a^2 + b^2 - c^2 - d^2)\I + (2bc - 2ad) \J + (2ac + 2bd) \K.
\end{equation*}
Using this, we see the map $(q_1, \dots, q_i, \dots, q_n) \mapsto (q_1, \dots, q_i', \dots, q_n)$ is an isometry of the quaternionic sphere which reverses the sign of $\left<\Hopf(q_i),\Hopf(q_j)\right>$.  Thus the expectation of $\left<e_i,e_j\right>$ is zero, as desired. The proof for $\Arm_2(n)$ is similar.
\end{proof}
% Note: I know the formula above looks funny, but I've checked it a couple of times.

Since we have nice formulae for the first and second moments of edgelength, we can work out the variance of edgelength with only a bit of algebra:

\begin{corollary}
The variance of edgelength on our spaces is given by
\begin{align*}
V(|e_i|,\Arm_3(n)) &= \frac{2 (n-1)}{n^2 (n+\nicefrac{1}{2})}, \quad V(|e_i|,\Pol_3(n)) = \frac{2 (n-2)}{n^2 (n+1)}. \\
V(|e_i|,\Arm_2(n)) &= \frac{4 (n-1)}{n^2 (n+1)}, \quad V(|e_i|,\Pol_2(n)) = \frac{4 (n-2)}{n^2 (n+2)}.
\end{align*}
\label{cor:variance}
\end{corollary}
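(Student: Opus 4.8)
The plan is to apply the elementary identity $V(|e_i|) = E(|e_i|^2) - \big(E(|e_i|)\big)^2$ in each of the four cases, since both moments on the right-hand side are already available. The second moments $E(|e_i|^2)$ are recorded in \cor{moments}, and the first moment is $E(|e_i|) = \nicefrac{2}{n}$ in every one of the four spaces.

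First I would pin down the first moment. One route is to set $p = 1$ in \prop{moments} and use $\Beta(1,x) = \nicefrac{1}{x}$; for instance on $\Arm_3(n)$ this gives $E(|e_i|) = 2\,\Beta(1,2n)/\Beta(1,2) = \nicefrac{2}{n}$, and the other three spaces reduce identically. A cleaner route, which also explains why the value is the same across all four spaces, is the symmetry observation noted just before the corollary: each of our measures is invariant under rearrangement of the edges, so all the $E(|e_j|)$ agree, and since $\sum_j |e_j| = 2$ holds identically we must have $E(|e_i|) = \nicefrac{2}{n}$. Either way, $\big(E(|e_i|)\big)^2 = \nicefrac{4}{n^2}$ in all four cases.

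Then I would substitute and simplify. For example, on $\Arm_3(n)$ the second moment $\nicefrac{6}{n(n+\nicefrac{1}{2})}$ from \cor{moments} gives
\[
V(|e_i|,\Arm_3(n)) = \frac{6}{n(n+\nicefrac{1}{2})} - \frac{4}{n^2} = \frac{6n - 4(n+\nicefrac{1}{2})}{n^2(n+\nicefrac{1}{2})} = \frac{2(n-1)}{n^2(n+\nicefrac{1}{2})},
\]
and the remaining three cases follow from the same one-line manipulation using the corresponding second moment from \cor{moments}.

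There is no genuine obstacle here; the work is purely the bit of algebra advertised in the text. The only thing to watch is the bookkeeping of the common denominator $n^2(n+c)$, where $c \in \{\nicefrac{1}{2},1,2\}$ depends on the space, and checking that the numerators collapse to $2(n-1)$, $2(n-2)$, $4(n-1)$, and $4(n-2)$ respectively, matching the four stated formulas.
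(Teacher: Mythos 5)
Your proposal is correct and is exactly the ``bit of algebra'' the paper intends: the text already records $E(|e_i|) = \nicefrac{2}{n}$ (just before Corollary~\ref{cor:moments}) and the second moments in Corollary~\ref{cor:moments}, and the corollary follows from $V(|e_i|) = E(|e_i|^2) - E(|e_i|)^2$ precisely as you compute. All four numerators check out, so there is nothing to add.
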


Similarly, it is easy to work out the covariance of edgelength.

\begin{corollary}\label{cor:covariance}
	The covariance of edgelength is given by
	\begin{align*}
		\Cov(|e_i|,|e_j|,\Arm_3(n)) &= \frac{-2}{n^2(n+\nicefrac{1}{2})},  \quad \Cov(|e_i|,|e_j|,\Pol_3(n)) = \frac{n-2}{n-1}  \frac{-2}{n^2(n+1)} \\
		\Cov(|e_i|,|e_j|,\Arm_2(n)) &= \frac{-4}{n^2(n+1)},  \quad \Cov(|e_i|,|e_j|,\Pol_2(n)) = \frac{n-2}{n-1}  \frac{-4}{n^2(n+2)}
	\end{align*}
\end{corollary}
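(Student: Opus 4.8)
The plan is to avoid computing the mixed moment $E(|e_i|\,|e_j|)$ directly and instead exploit the single linear constraint that every polygon in our spaces has total length $2$, together with the permutation symmetry of all four measures. First I would record the two structural facts I need. The constraint is that $\sum_{i=1}^n |e_i| = 2$ holds identically on each of $\Arm_3(n)$, $\Pol_3(n)$, $\Arm_2(n)$, and $\Pol_2(n)$, so the random variable $\sum_i |e_i|$ is almost surely constant and hence has variance $0$. The symmetry is that each measure is invariant under permutations of the edges: permutation matrices sit inside $Sp(n)$ (acting on $S^{4n-1}$), inside $U(n)$ (acting on $V_2(\C^n)$), and inside $O(n)$ (acting on $V_2(\R^n)$), and permuting the coordinates of the model space permutes the edges of the image polygon. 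Consequently $V(|e_i|)$ is independent of $i$, and $\Cov(|e_i|,|e_j|)$ for $i \neq j$ takes a single common value $C$.

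Next I would expand the variance of the (constant) total length. Writing $V$ for the common edgelength variance and $C$ for the common off-diagonal covariance, bilinearity of covariance gives
\begin{equation*}
0 = V\Bigl(\sum_{i=1}^n |e_i|\Bigr) = \sum_{i=1}^n V(|e_i|) + \sum_{i \neq j} \Cov(|e_i|,|e_j|) = n V + n(n-1) C.
\end{equation*}
Solving this one equation yields the clean relation $C = -V/(n-1)$, valid on all four spaces simultaneously.

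Finally I would substitute the four variance values already computed in Corollary~\ref{cor:variance}. For example, on $\Arm_3(n)$ we have $V = 2(n-1)/\bigl(n^2(n+\nicefrac{1}{2})\bigr)$, so $C = -2/\bigl(n^2(n+\nicefrac{1}{2})\bigr)$; on $\Pol_3(n)$ the factor $(n-2)$ in the numerator of $V$ survives the division, producing the stated $\tfrac{n-2}{n-1}\cdot\tfrac{-2}{n^2(n+1)}$, and the two planar cases run identically with their own constants.

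I do not anticipate a genuine obstacle: the whole argument rests on the observation that fixing total length imposes exactly one linear relation among the edgelengths, which forces the covariance matrix of $(|e_1|,\dots,|e_n|)$ to have vanishing row sums. The only point requiring a moment's care is justifying that all off-diagonal covariances are equal, which is where permutation invariance of the measures—inherited from the symmetry of the model spaces established in the earlier diagrams—does the work. Everything after the relation $C = -V/(n-1)$ is arithmetic substitution from Corollary~\ref{cor:variance}.
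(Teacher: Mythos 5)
Your proposal is correct and follows essentially the same route as the paper: both arguments expand the vanishing variance of the constant total length $\sum |e_i| = 2$, invoke permutation invariance to reduce to a single off-diagonal covariance $C = -V(|e_i|)/(n-1)$, and substitute the variances from Corollary~\ref{cor:variance}. No gaps.
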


\begin{proof}
	We start with the fact that
	\[
		V\left(\sum_{i=1}^n |e_i| \right) = \sum_{i,j} \Cov(|e_i|,|e_j|) = \sum_{i=1}^n V(|e_i|) + \sum_{i \neq j} \Cov(|e_i|,|e_j|).
	\]
	Since $\sum |e_i| = 2$, the left hand side is zero. For $i\neq j$, $\Cov(|e_i|,|e_j|)$ is independent of $i$ and $j$ since we can permute edges; likewise, $V(|e_i|)$ is independent of $i$. Therefore, on each of our polygon spaces the above equation reduces to
	\[
		\Cov(|e_i|,|e_j|) = \frac{-1}{n-1}V(|e_i|).
	\]

	Plugging in the variances from Corollary~\ref{cor:variance} yields the desired covariances.
\end{proof}

We can see that the variance and covariance of edgelength are approaching zero as $n \rightarrow \infty$. It is tempting to think that this makes our polygons ``asymptotically equilateral''. However, Diao~\cite{Diaopc} notes that this is really an artifact of the fact that the edgelength is approaching zero. If we rescale our polygons to length $2n$ so that the mean edgelength is 2, we see that the variances above approach $2$ or $4$ as $n \rightarrow \infty$. Thus our polygons are not becoming ``relatively equilateral'' as $n \rightarrow \infty$: the probability that an edge is larger than a fixed multiple of the mean converges to a positive value. We will compute some of these probabilities in Section~\ref{sec:pdfs}. However, even after rescaling the covariance still goes to zero, so the edgelengths \emph{are} becoming ``asymptotically uncorrelated''.

\section{Averaging Chord Lengths over an Edge Set Ensemble of Polygons}

We now set out to compute the expected value for the chord lengths of a random arm or polygon. Given $(a,b,\theta)$ in the quaternionic sphere which maps to a fixed arm $P = \Hopf(a,b,\theta) = \Hopf(\sqrt{2}a\cos\theta + \sqrt{2}b \sin\theta \J)$, the squared length of the chord skipping the first $k$ edges is
\[
		\chord(k,P) = \left(2 \cos^2\theta \sum_{j=1}^k a_j\bar{a}_j - 2 \sin^2\theta \sum_{j=1}^k b_j\bar{b}_j\right)^2 + 16\sin^2\theta \cos^2\theta \left|\sum_{j=1}^k a_j \bar{b}_j\right|^2.
\]

It does not seem simple to compute the expected value of this formula directly. We now introduce one of the key ideas of this paper: we can improve the situation substantially by averaging the right-hand side over all possible permutations of the edges in order to symmetrize it. Further, the resulting formula will apply to \emph{any} measure on polygon space which is symmetric under rearrangements of an edge set, not just to our measures.

\begin{definition}
We will call the set of polygons obtained by rearranging a set of edges $e_1, \dots, e_n$ the \emph{edge set ensemble} of polygons $\mathcal{P}(e_1, \dots, e_n)$. We note that the sum of the edges is invariant under these rearrangements, so if the polygon $e_1, \dots, e_n$ fails to close, each polygon in the ensemble fails to close by the same vector. Thus we say $\mathcal{P} \in \Arm_i(n,\ell)$ if the polygons in $\mathcal{P}$ fail to close by a vector of length $\ell$.
\label{def:ese}
\end{definition}

We use this to define a new function $\schord(k,\mathcal{P})$. For any measure on polygon space which is invariant under permutations of the edges, this has the same expected value as $\chord(k,P)$.

\clearpage

\begin{definition}
\label{def:schord}
The average of the squared length of the chords skipping the first $k$ edges of polygons in an edge set ensemble $\mathcal{P}$ is
\begin{multline*}
	\schord(k,\mathcal{P}) = \frac{4}{n!} \sum_{\sigma \in S_n} \left[\left(\cos^2\theta\sum_{j=1}^k a_{\sigma(j)}\bar{a}_{\sigma(j)} - \sin^2\theta\sum_{j=1}^k b_{\sigma(j)}\bar{b}_{\sigma(j)}\right)^2 \right. \\  
	\left. + 4\sin^2 \theta \cos^2 \theta \left|\sum_{j=1}^k a_{\sigma(j)} \bar{b}_{\sigma(j)}\right|^2\right].
\end{multline*}
where $S_n$ is the symmetric group on $n$ letters. 
\end{definition}

We can now use some algebra to prove

\begin{proposition}\label{prop:schord}
For any edge set ensemble $\mathcal{P} \in \Arm_i(n,\ell)$,
\[
	 	\schord(k,\mathcal{P}) = \frac{k(n-k)}{n(n-1)}\sum_{j=1}^n |e_j|^2 +		
		 \frac{k(k-1)}{n(n-1)} \ell^2.
\]
\end{proposition}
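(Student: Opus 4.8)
The plan is to expand the two squared quantities in Definition~\ref{def:schord}, carry out the average over $S_n$ by elementary counting, and then recognize the two surviving combinations as $\sum_j|e_j|^2$ and $\ell^2$. First I would introduce the real scalars $p_i = \cos^2\theta\, a_i\bar{a}_i - \sin^2\theta\, b_i\bar{b}_i$ and the complex scalars $\gamma_i = a_i\bar{b}_i$, so that the bracketed summand in $\schord$ reads $\bigl(\sum_{j=1}^k p_{\sigma(j)}\bigr)^2 + 4\sin^2\theta\cos^2\theta\,\bigl|\sum_{j=1}^k\gamma_{\sigma(j)}\bigr|^2$. Everything then reduces to averaging $\bigl(\sum_{j=1}^k p_{\sigma(j)}\bigr)^2$ and $\bigl|\sum_{j=1}^k\gamma_{\sigma(j)}\bigr|^2$ over $\sigma\in S_n$.

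The averaging is the computational core. Expanding each square into diagonal ($j=j'$) and off-diagonal ($j\neq j'$) pieces, I would use two counting facts: for a fixed position $j$ the value $\sigma(j)$ equals each index exactly $(n-1)!$ times, and for fixed distinct positions the pair $(\sigma(j),\sigma(j'))$ equals each ordered pair of distinct indices exactly $(n-2)!$ times. This gives
\[
\frac{1}{n!}\sum_{\sigma\in S_n}\Bigl(\sum_{j=1}^k p_{\sigma(j)}\Bigr)^2 = \frac{k}{n}\sum_{i=1}^n p_i^2 + \frac{k(k-1)}{n(n-1)}\sum_{i\neq i'}p_i p_{i'},
\]
together with the identical formula for $\gamma$. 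Rewriting $\sum_{i\neq i'}p_ip_{i'} = (\sum_i p_i)^2 - \sum_i p_i^2$ (and likewise for $\gamma$) collapses the answer into two scalar invariants of the edge set: a \emph{diagonal} quantity $D = 4\bigl(\sum_i p_i^2 + 4\sin^2\theta\cos^2\theta\sum_i|\gamma_i|^2\bigr)$ and a \emph{total} quantity $T = 4\bigl((\sum_i p_i)^2 + 4\sin^2\theta\cos^2\theta|\sum_i\gamma_i|^2\bigr)$.

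The slick step is the regrouping: using $\tfrac{k}{n} - \tfrac{k(k-1)}{n(n-1)} = \tfrac{k(n-k)}{n(n-1)}$, the two pieces reassemble as $\schord(k,\mathcal{P}) = \tfrac{k(n-k)}{n(n-1)}D + \tfrac{k(k-1)}{n(n-1)}T$, which already has the shape of the claimed formula. It then remains to identify $D$ and $T$. For $D$, completing the square via $(\cos^2\theta\,\alpha_i - \sin^2\theta\,\beta_i)^2 + 4\sin^2\theta\cos^2\theta\,\alpha_i\beta_i = (\cos^2\theta\,\alpha_i + \sin^2\theta\,\beta_i)^2$ (with $\alpha_i = a_i\bar{a}_i$, $\beta_i = b_i\bar{b}_i$, using $|\gamma_i|^2 = \alpha_i\beta_i$) turns the summand into a perfect square matching $|e_i|^2 = 4(\cos^2\theta\,\alpha_i + \sin^2\theta\,\beta_i)^2$, since the Hopf map gives $|e_i| = 2(\cos^2\theta\,\alpha_i + \sin^2\theta\,\beta_i)$; hence $D = \sum_i|e_i|^2$. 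For $T$, the combination is literally the chord formula $\chord(n,P)$ evaluated on all $n$ edges, i.e.\ $|\sum_i e_i|^2 = \ell^2$, which is constant across the ensemble by Definition~\ref{def:ese}.

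I expect the only real obstacle to be bookkeeping: keeping the diagonal and off-diagonal contributions of both the real sum and the complex sum straight through the averaging, and not losing the factors of $\cos^2\theta$, $\sin^2\theta$, and $4$. The two genuinely content-bearing observations are the combinatorial regrouping identity that produces the clean coefficient $k(n-k)/(n(n-1))$, and the completing-the-square that converts $p_i^2 + 4\sin^2\theta\cos^2\theta|\gamma_i|^2$ into the perfect square $\tfrac14|e_i|^2$; once these are in hand the proposition follows immediately.
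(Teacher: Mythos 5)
Your proof is correct, and while it rests on the same two pillars as the paper's argument --- the permutation-counting step (your $(n-1)!$ and $(n-2)!$ counts are exactly the content of the paper's Lemma~\ref{lem:distribute}) and the identification of the surviving invariants with $\sum_j|e_j|^2$ and $\ell^2$ --- your organization of the endgame is genuinely cleaner and buys something. The paper expands the bracket into eight separate sums, applies the counting lemma term by term, and then repeatedly invokes the normalizations $\sum_j a_j\bar a_j = \sum_j b_j\bar b_j = 1$ to eliminate the off-diagonal sums one at a time over several displayed equations. You instead package everything into the quadratic forms in $p_i$ and $\gamma_i$, use $\sum_{i\neq i'}x_i x_{i'} = (\sum_i x_i)^2 - \sum_i x_i^2$ to regroup into the diagonal invariant $D$ and the total invariant $T$, and then identify $D = \sum_j|e_j|^2$ by completing the square (using $|\gamma_i|^2 = \alpha_i\beta_i$) and $T = \ell^2$ as the $k=n$ case of the chord formula. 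Notably, your route never uses $|\vec a| = |\vec b| = 1$, so it proves the identity for arbitrary $(\vec a,\vec b,\theta)$, and the regrouping identity $\tfrac{k}{n} - \tfrac{k(k-1)}{n(n-1)} = \tfrac{k(n-k)}{n(n-1)}$ makes the final coefficients appear in one line rather than through successive cancellations. One bookkeeping remark: your answer matches the statement of Proposition~\ref{prop:schord} as given in the main text; the restatement in Appendix~\ref{appendix} carries an extra factor of $4$ on the $\ell^2$ term only because there $\ell^2$ has been replaced by $\cos^2 2\theta + \sin^2 2\theta\,|\langle\vec a,\vec b\rangle|^2$, which by \eqref{eq:arm} equals $\ell^2/4$ --- so there is no conflict with your result.
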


The proof of this proposition is involved but not terribly illuminating, so we defer it to Appendix~\ref{appendix}. However, the result is surprising and pleasant: the averages of the squared lengths of the chords skipping $k$ edges over all polygons obtained from a given set of $n$ edges depend only on $k$, $n$, and the lengths of the edges! We note that this proposition has nothing to do with our measure on polygon space, so it should be a general tool for computing expected chordlengths for any measure on polygon space. Zirbel and Millett have obtained a similar result independently~\cite{Zirbel:ub}. Figure~\ref{fig:ese_figure} gives a particular example of this theorem.

\begin{figure}[ht]
\begin{tabular}{ccc}
\begin{overpic}[width=1.5in]{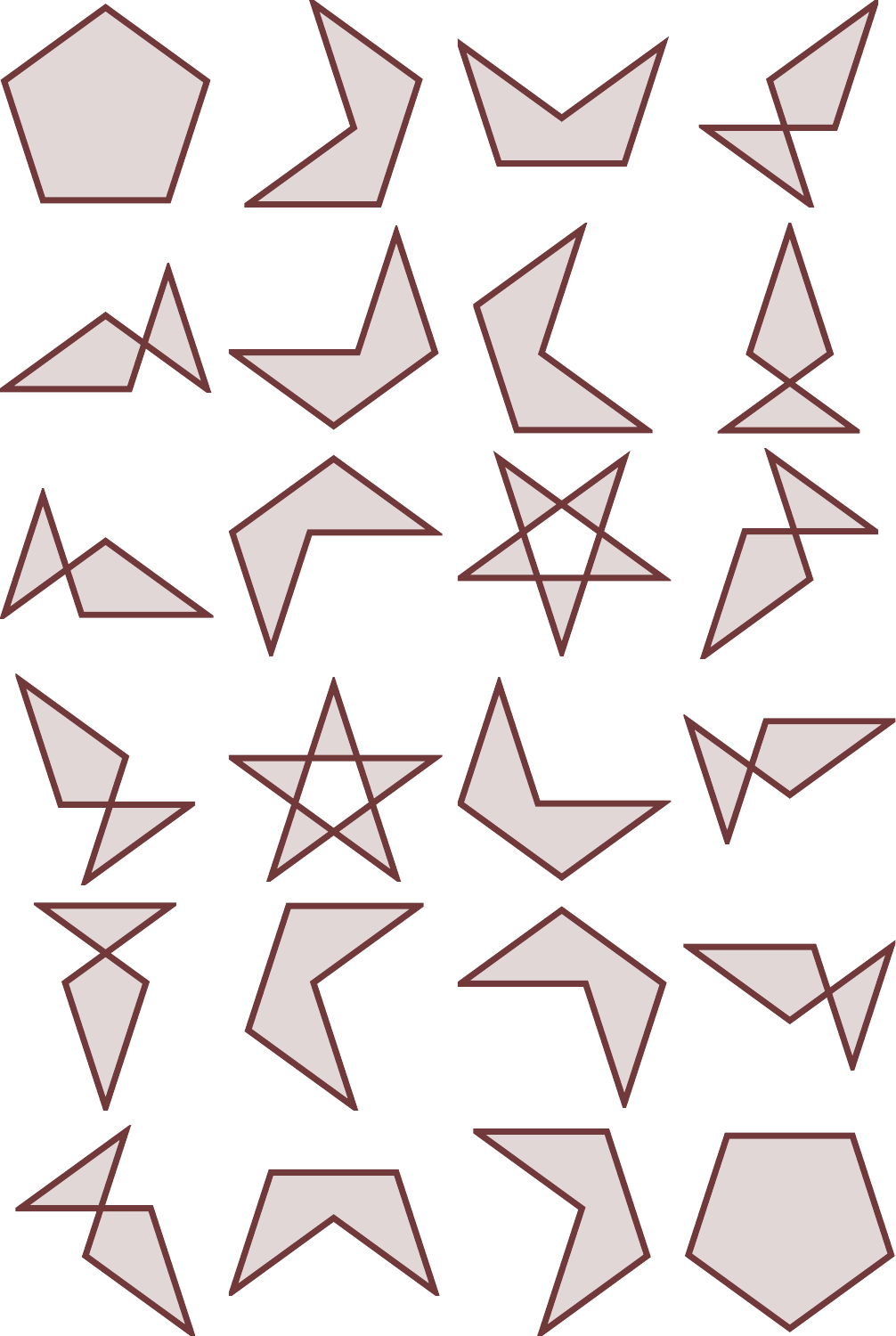}
\end{overpic} 
&
\begin{overpic}[width=1.5in]{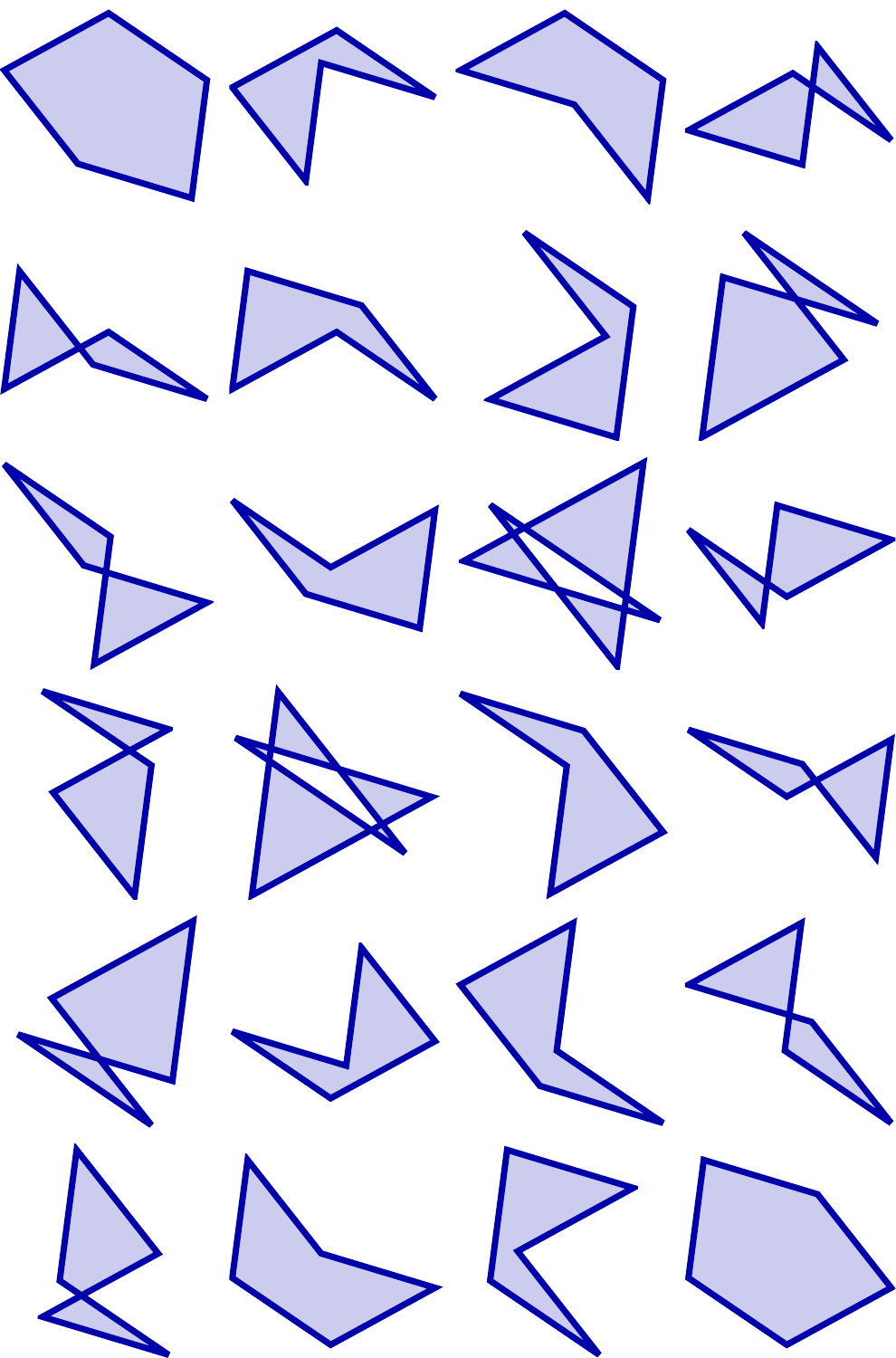}
\end{overpic}
&
\begin{overpic}[width=2.5in]{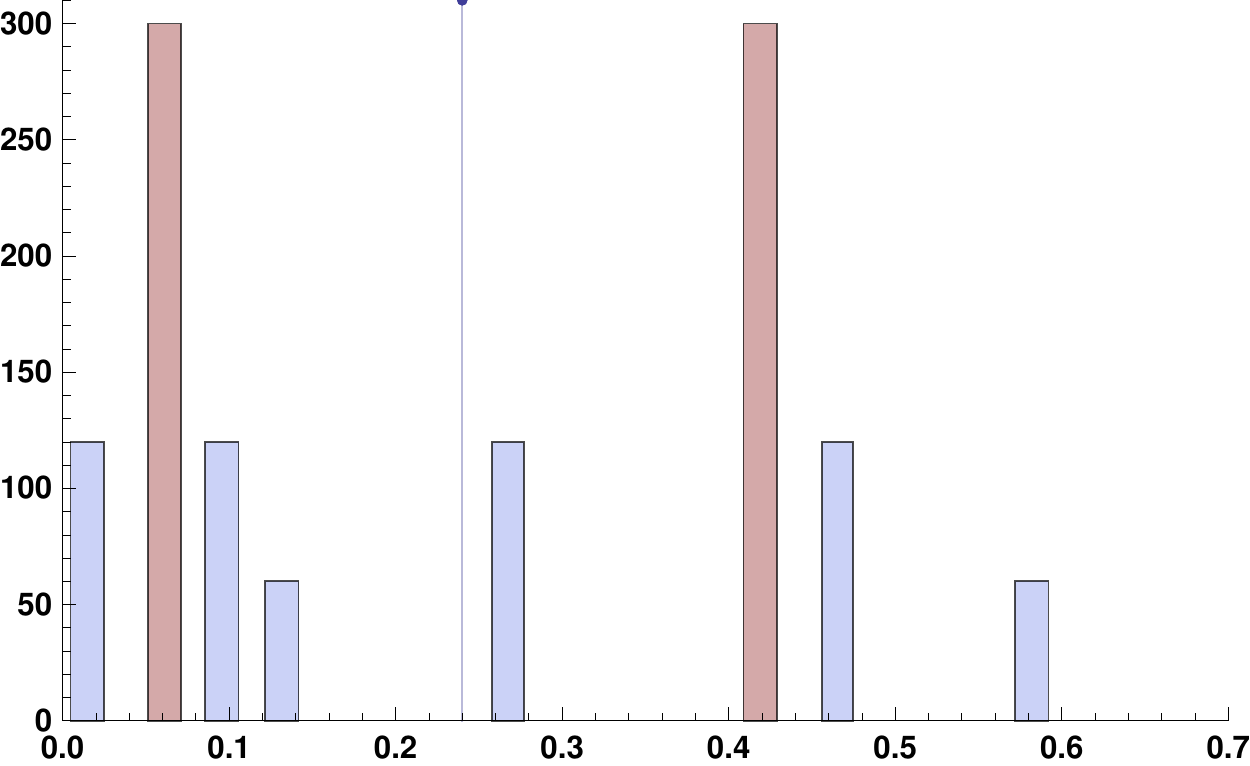}
\end{overpic}
\\
\begin{minipage}{1.5in}
The edge set ensemble including a regular pentagon.
\end{minipage} &
\begin{minipage}{1.5in}
Another edge set ensemble of closed, planar, equilateral 5-gons. 
\end{minipage}&
\begin{minipage}{2.5in}
Histogram of squared chord lengths skipping two edges over both ensembles.
\end{minipage}
\end{tabular}
\caption{The figures above show two edge set ensembles for two sets of 5 edges of length $\nicefrac{2}{5}$. The first set, shown left, includes the regular pentagon of length 2. The second set, shown center, is irregular. Though each edge set ensemble contains $5! = 120$ polygons, we reduce by cyclic reordering to display only 24 representative polygons from each. The 120 polygons in each set have a total of $120 \cdot 5 = 600$ chords which skip two edges. The right-hand plot shows a histogram of the squared lengths of these 600 chords for the regular edge set ensemble (pair of tall darker bars) and the irregular edge set ensemble (set of six shorter bars). Since the polygons close, $\ell = 0$, and Proposition~\ref{prop:schord} predicts a mean of $\nicefrac{6}{25}$ for both data sets. This value is indicated by a thin vertical line. The mean value of each data set agrees with $\nicefrac{6}{25}$ to at least 15 digits in \emph{Mathematica}.\label{fig:ese_figure}}
\end{figure}

We can also consider the radius of gyration of our polygons. If $P$ is assembled from the edges $e_1, \dots, e_n$ (in order) then the radius of gyration is half of the average squared distance between any two vertices of the polygon (including repeated pairs where the distance is zero). Again, we can symmetrize this definition:
\begin{definition}
The radius of gyration of an open polygon $P$ with $n$ edges $e_1, \dots, e_n$ and $n+1$ vertices $v_1, \dots, v_{n+1}$ is 
\begin{equation*}
\gyradius(e_1, \dots, e_n) = \frac{1}{2(n+1)^2} \sum_{i,j} \left| v_i - v_j \right|^2.
\end{equation*}
We can symmetrize this formula over an edge set ensemble $\mathcal{P}$ to get
\begin{equation*}
\sgyradius(\mathcal{P}) = \frac{1}{n!} \sum_{\sigma \in S_n} \gyradius(e_{\sigma(1)}, \dots, e_{\sigma(n)})
\end{equation*}
\end{definition}

We can then prove

\begin{proposition}
\label{prop:sgyradius}
For any edge set ensemble of polygons $\mathcal{P} \in \Arm_i(n,\ell)$ with $\ell > 0$, 
\begin{equation*}
	\sgyradius(\mathcal{P}) = \frac{n+2}{12(n+1)} \left( \sum_{j=1}^n |e_j|^2 + \ell^2 \right).
\end{equation*}
We think of an edge set ensemble of polygons $\mathcal{P} \in \Pol_i(n) = \Arm_i(n,0)$ as having one fewer vertex. Thus, the correct formula for $\sgyradius$ is 
\begin{equation*}
	\sgyradius(\mathcal{P}) = \frac{n+1}{12n} \left( \sum_{j=1}^n |e_j|^2 \right).
\end{equation*}
\end{proposition}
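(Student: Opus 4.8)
The plan is to reduce the radius of gyration to a sum of squared chordlengths, so that I can apply Proposition~\ref{prop:schord} directly. First I would observe that for an open polygon the difference $v_i - v_j$ (with $i < j$) is exactly the chord skipping some block of consecutive edges: writing vertices as partial sums $v_i = \sum_{m<i} e_m$, we have $v_j - v_i = e_i + \dots + e_{j-1}$, which is a chord spanning $k = j - i$ edges. Thus $\gyradius$ is, up to the factor $\tfrac{1}{2(n+1)^2}$, a sum over all ordered pairs of vertices of squared chordlengths, where a chord spanning $k$ edges appears for each pair at separation $k$. Summing over the ensemble (i.e.\ applying $\tfrac{1}{n!}\sum_\sigma$) then converts each squared chordlength into $\schord(k,\mathcal{P})$ by Definition~\ref{def:schord}, and Proposition~\ref{prop:schord} evaluates each of these.

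Concretely, the key steps are: (1) count how many ordered vertex pairs $(v_i,v_j)$ are separated by exactly $k$ edges among the $n+1$ vertices — this is $2(n+1-k)$ for $k = 1, \dots, n$ (the factor $2$ accounting for both orders, and the $k=0$ diagonal contributing zero). (2) Symmetrize over the ensemble so that $\sgyradius(\mathcal{P}) = \tfrac{1}{2(n+1)^2}\sum_{k=1}^n 2(n+1-k)\,\schord(k,\mathcal{P})$. (3) Substitute the closed form from Proposition~\ref{prop:schord}, so that the whole expression becomes
\[
	\sgyradius(\mathcal{P}) = \frac{1}{(n+1)^2}\sum_{k=1}^n (n+1-k)\left[\frac{k(n-k)}{n(n-1)}\sum_j |e_j|^2 + \frac{k(k-1)}{n(n-1)}\ell^2\right].
\]
(4) Evaluate the two resulting polynomial sums in $k$ — these are standard sums of $k$, $k^2$, $k^3$ against the weight $(n+1-k)$ — and collect coefficients of $\sum_j |e_j|^2$ and of $\ell^2$. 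The claim is that both coefficients reduce to $\tfrac{n+2}{12(n+1)}$, producing the stated formula for the open case.

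For the closed case $\ell = 0$, the polygon has $n$ vertices rather than $n+1$, so I would rerun steps (1)--(4) with $n+1$ replaced by $n$ throughout and with the chord index $k$ ranging appropriately over the cyclically-closed polygon; since $\ell = 0$ the $\ell^2$ term drops out and only the $\sum_j |e_j|^2$ coefficient survives, which I expect to simplify to $\tfrac{n+1}{12n}$.

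The main obstacle is the bookkeeping in step (4): the weighted sums $\sum_k (n+1-k)k(n-k)$ and $\sum_k (n+1-k)k(k-1)$ are cubic in $k$ and must each collapse, after dividing by $(n+1)^2 n(n-1)$, to exactly $\tfrac{n+2}{12(n+1)}$. Verifying that these two \emph{a priori} different sums yield the \emph{same} coefficient is the delicate point, and getting the vertex-count factor in step (1) correct (open vs.\ closed, and the treatment of the zero-distance diagonal pairs built into the definition of $\gyradius$) is what determines whether the normalizations come out right.
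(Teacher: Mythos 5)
Your proposal is correct and follows essentially the same route as the paper: the paper likewise rewrites $\gyradius$ as a sum of squared chords over vertex pairs, symmetrizes to get $\sgyradius(\mathcal{P}) = \sum_{k=1}^n \frac{(n+1)-k}{(n+1)^2}\schord(k,\mathcal{P})$, and then substitutes Proposition~\ref{prop:schord} and sums over $k$. The delicate point you flag does work out — both weighted sums $\sum_{k=1}^n (n+1-k)k(n-k)$ and $\sum_{k=1}^n (n+1-k)k(k-1)$ equal $\tfrac{1}{12}n(n+1)(n-1)(n+2)$ (their difference is antisymmetric under $k \mapsto n+1-k$), so both coefficients reduce to $\tfrac{n+2}{12(n+1)}$ as claimed, and the closed case with $n+1$ replaced by $n$ and $k$ running to $n-1$ gives $\tfrac{n+1}{12n}$.
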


\begin{proof}
We start with some algebra on the definition of $\gyradius$:
\begin{multline*}
	\gyradius(e_1, \dots, e_n) = \frac{1}{2(n+1)^2} \sum_{i,j} \left| v_i - v_j \right|^2 = \frac{1}{(n+1)^2}  \sum_{j=1}^n \sum_{i=j+1}^{n+1} \left| v_i - v_j \right|^2 \\
	= \frac{1}{(n+1)^2}  \sum_{j=1}^n \sum_{k=1}^{n+1-j} \left| v_{j+k} - v_j \right|^2 
	         =  \frac{1}{(n+1)^2}  \sum_{j=1}^{n} \sum_{k=1}^{n+1-j} \left| \sum_{m = j}^{j+k-1} e_m \right|^2. 
\end{multline*}
We can then continue with $\sgyradius$:
\begin{align*}
   	\sgyradius(\mathcal{P}) &= \frac{1}{n!} \sum_{\sigma \in S_n} \gyradius(e_{\sigma(1)}, \dots, e_{\sigma(n)}) \\
						   &= \frac{1}{n!} \sum_{\sigma \in S_n} \left( \frac{1}{(n+1)^2}  \sum_{j=1}^{n} \sum_{k=1}^{n+1-j} \left| \sum_{m = j}^{j+k-1} e_{\sigma(m)} \right|^2 \right)\\
	                        &= \frac{1}{(n+1)^2}  \sum_{j=1}^{n} \sum_{k=1}^{n+1-j} \frac{1}{n!} \sum_{\sigma \in S_n} \left| \sum_{m = j}^{j+k-1} e_{\sigma(m)} \right|^2 \\
	&= \frac{1}{(n+1)^2} \sum_{j=1}^n \sum_{k=1}^{n+1-j} \schord(k,\mathcal{P}) \\
	&= \sum_{k=1}^n \frac{(n+1)-k}{(n+1)^2} \schord(k,\mathcal{P}).
\end{align*}
Combining this with Proposition~\ref{prop:schord} and summing over $k$ completes the proof.
\end{proof}

\section{Expected Value of Chord Length and Radius of Gyration}

We can now use our previous results on the moments of the edgelength distribution to compute the expected value of squared chord length. We note that combining Proposition~\ref{prop:schord} with our expected value for $\ell^2$ from Proposition~\ref{prop:ell} and simplifying the coefficients yields that the expected value of $\chord(k)$ on arm space is given by
\begin{equation*}
E(\chord(k),\Arm_i(n)) = k E(|e_j|^2,\Arm_i(n)).
\end{equation*}
On polygon space, the situation is similar:
\begin{equation*}
E(\chord(k),\Pol_i(n)) = \left( \frac{n-k}{n-1} \right) \, k E(|e_j|^2,\Pol_i(n)).
\end{equation*}
We can now compute explicit formulae for these expectations:

\begin{proposition}
The expected values for $\chord(k)$ on the arm and polygon spaces are:
\begin{align*}
E(\chord(k),\Arm_3(n)) &= \frac{6 k}{n(n+\nicefrac{1}{2})}, \quad E(\chord(k),\Pol_3(n)) = \left(\frac{n-k}{n-1}\right) \frac{6k}{n(n+1)}. \\
E(\chord(k),\Arm_2(n)) &= \frac{8 k}{n(n+1)}, \quad E(\chord(k),\Pol_2(n)) = \left( \frac{n-k}{n-1} \right) \frac{8k}{n(n+2)}.
\end{align*}
\label{prop:expected chords}
\end{proposition}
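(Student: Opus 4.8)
The plan is to reduce the proposition to the two intermediate identities displayed just above its statement and then substitute the second moments recorded in Corollary~\ref{cor:moments}. The key reduction is that, because each of our measures is invariant under permutations of the edges, the expected value of $\chord(k,P)$ coincides with the expected value of its symmetrization $\schord(k,\mathcal{P})$; this lets me replace the unwieldy chord formula by the clean expression of Proposition~\ref{prop:schord}.

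First I would take expectations on both sides of Proposition~\ref{prop:schord}. By symmetry of the measure under edge permutations, $E\bigl(\sum_j |e_j|^2\bigr) = n\,E(|e_j|^2)$, and on arm space Proposition~\ref{prop:ftc} supplies $E(\ell^2) = n\,E(|e_j|^2)$. Feeding these into the formula and collecting the coefficients of $E(|e_j|^2)$ produces $k(n-k) + k(k-1) = k(n-1)$, which cancels the $n-1$ in the denominator and leaves $E(\chord(k),\Arm_i(n)) = k\,E(|e_j|^2,\Arm_i(n))$. On polygon space the closure condition $\ell = 0$ annihilates the second term, so only the $\frac{k(n-k)}{n(n-1)}$ coefficient survives and I obtain $E(\chord(k),\Pol_i(n)) = \bigl(\tfrac{n-k}{n-1}\bigr)\,k\,E(|e_j|^2,\Pol_i(n))$.

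With these two intermediate formulas established, the stated proposition is pure substitution. Plugging the four values $\tfrac{6}{n(n+\nicefrac{1}{2})}$, $\tfrac{6}{n(n+1)}$, $\tfrac{8}{n(n+1)}$, and $\tfrac{8}{n(n+2)}$ from Corollary~\ref{cor:moments} into the appropriate identity and simplifying gives the four displayed expectations.

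The hard part will not be in this argument at all: all of the genuine labor has already been absorbed into Proposition~\ref{prop:schord}, whose proof is deferred to the appendix, and into the moment computation of Corollary~\ref{cor:moments}. The only place demanding a moment's attention is the algebraic collapse $k(n-k)+k(k-1)=k(n-1)$ on arm space, which is precisely what makes the arm-space answer a clean multiple of a single edgelength moment; the remaining steps are routine simplification.
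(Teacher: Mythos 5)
Your proposal is correct and follows essentially the same route as the paper: the paper's proof is exactly the reduction to the two intermediate identities $E(\chord(k),\Arm_i(n)) = k\,E(|e_j|^2)$ and $E(\chord(k),\Pol_i(n)) = \bigl(\tfrac{n-k}{n-1}\bigr)k\,E(|e_j|^2)$ via Proposition~\ref{prop:schord} and the failure-to-close expectation, followed by substitution from Corollary~\ref{cor:moments}. You have simply made explicit the coefficient collapse $k(n-k)+k(k-1)=k(n-1)$ that the paper leaves as ``simplifying the coefficients.''
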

 
\begin{proof}
This follows directly from our observations above, Proposition~\ref{prop:schord}, and Corollary~\ref{cor:moments}.
\end{proof}
 
We can use Proposition~\ref{prop:sgyradius} and our work above to compute expected values for radius of gyration as well since again (by construction) the expected values of $\sgyradius$ match those of $\gyradius$:
\begin{proposition}
\label{prop:egyradius}
The expected value of $\gyradius$ over our arm and polygon spaces is
\begin{align*}
E(\gyradius,\Arm_3(n)) &=  \frac{n+2}{(n+1)(n+\nicefrac{1}{2})}, \quad E(\gyradius,\Pol_3(n)) = \frac{1}{2} \frac{1}{n}, \\
E(\gyradius,\Arm_2(n)) &= \frac{4}{3} \frac{n+2}{(n+1)^2}, \quad E(\gyradius,\Pol_2(n)) = \frac{2}{3} \frac{n+1}{n(n+2)}.
\end{align*}
\end{proposition}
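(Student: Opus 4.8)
The plan is to take expectations of the closed-form expressions for $\sgyradius$ supplied by Proposition~\ref{prop:sgyradius}, exploiting the fact (noted just above the statement) that $E(\gyradius) = E(\sgyradius)$ on each of our spaces. This identity is what makes the computation tractable: by definition $\sgyradius(\mathcal{P})$ is the average of $\gyradius$ over the edge set ensemble $\mathcal{P}$, i.e.\ an average over $S_n$, and each of our measures is invariant under the coordinate permutations that induce rearrangements of the edges. Averaging over $S_n$ before integrating therefore leaves the integral unchanged, so it suffices to integrate the right-hand sides of Proposition~\ref{prop:sgyradius}, which are explicit functions of $\sum_j |e_j|^2$ and $\ell^2$.

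For the arm spaces I would apply linearity of expectation to $\sgyradius(\mathcal{P}) = \frac{n+2}{12(n+1)}\bigl(\sum_{j} |e_j|^2 + \ell^2\bigr)$. Since the edges are identically distributed we have $E\bigl(\sum_{j} |e_j|^2\bigr) = n\,E(|e_j|^2)$, and Proposition~\ref{prop:ftc} gives $E(\ell^2,\Arm_i(n)) = n\,E(|e_j|^2)$ as well. The two contributions combine to $2n\,E(|e_j|^2)$, so that $E(\gyradius,\Arm_i(n)) = \frac{n(n+2)}{6(n+1)}\,E(|e_j|^2)$. Substituting the second moments $\frac{6}{n(n+\nicefrac{1}{2})}$ and $\frac{8}{n(n+1)}$ from Corollary~\ref{cor:moments} and cancelling then produces the two stated arm-space values.

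For the polygon spaces, $\ell = 0$ identically, so I would instead use the closed-polygon formula $\sgyradius(\mathcal{P}) = \frac{n+1}{12n}\sum_{j} |e_j|^2$. Taking expectations and again using $E\bigl(\sum_{j} |e_j|^2\bigr) = n\,E(|e_j|^2)$ collapses this to $\frac{n+1}{12}\,E(|e_j|^2)$, into which I substitute $\frac{6}{n(n+1)}$ and $\frac{8}{n(n+2)}$ from Corollary~\ref{cor:moments} to recover the two stated polygon-space values.

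There is little genuine difficulty here, and I expect the only point requiring care to be the choice of which formula in Proposition~\ref{prop:sgyradius} to apply on arm space. That proposition states the first formula for ensembles with $\ell > 0$ and adjusts the vertex count when $\ell = 0$; however, the closed locus $\Pol_i(n) = \Arm_i(n,0)$ is a measure-zero subset of each $\Arm_i(n)$, so integrating the $\ell > 0$ formula over all of arm space is legitimate. Beyond this, the proof is purely a matter of combining the already-established Propositions~\ref{prop:sgyradius} and~\ref{prop:ftc} with Corollary~\ref{cor:moments}, so the work is essentially bookkeeping rather than a new estimate.
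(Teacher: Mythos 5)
Your proposal is correct and follows exactly the route the paper intends: replace $\gyradius$ by $\sgyradius$ using permutation invariance of the measure, apply the appropriate formula from Proposition~\ref{prop:sgyradius} (with $E(\ell^2)=nE(|e_j|^2)$ from Proposition~\ref{prop:ftc} on arm space and $\ell=0$ on polygon space), and substitute the second moments from Corollary~\ref{cor:moments}; all four resulting values check out. Your remark about the measure-zero closed locus in arm space is a careful touch the paper leaves implicit.
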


We can use these chordlength formulae to compute the expected value of the inner product of two edge vectors in a polygon:
\begin{corollary}
The expected value of $\left< e_i, e_j \right>$ on our spaces is given by
\begin{align*}
E(\left<e_i,e_j\right>,\Arm_3(n)) &= 0, \quad E(\left<e_i,e_j\right>,\Pol_3(n)) = -\frac{6}{n^3 - n}. \\
E(\left<e_i,e_j\right>,\Arm_2(n)) &= 0, \quad E(\left<e_i,e_j\right>,\Pol_2(n)) = -\frac{8}{(n-1)n(n+2)}.
\end{align*}
\label{cor:dotproduct}
\end{corollary}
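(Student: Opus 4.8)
The plan is to recover the edge--edge inner product from the chordlength expectations already in hand, exploiting the fact that each chord is a sum of consecutive edges. Writing the chord that skips the first $k$ edges as $\sum_{j=1}^k e_j$ and expanding the square,
\[
\chord(k) = \Bigl| \sum_{j=1}^k e_j \Bigr|^2 = \sum_{j=1}^k |e_j|^2 + \sum_{\substack{1 \le j,\, l \le k \\ j \neq l}} \langle e_j, e_l \rangle.
\]
I would then take expectations. Each of the four measures is invariant under permutations of the edges (the symmetry already used in the proof of \cor{covariance}), so $E(|e_j|^2)$ is independent of $j$ and $E(\langle e_i, e_j \rangle)$ is independent of the pair $i \neq j$; denote this common value by $C$. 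Since there are $k(k-1)$ ordered off-diagonal pairs, the expectation collapses to
\[
E(\chord(k)) = k\, E(|e_i|^2) + k(k-1)\, C.
\]

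Solving for $C$ is now immediate, and $k = 2$ is the least painful choice, giving $C = \tfrac12\bigl(E(\chord(2)) - 2\,E(|e_i|^2)\bigr)$, where $E(\chord(2))$ is read off from \prop{expected chords} and $E(|e_i|^2)$ from \cor{moments}. For the two arm spaces the chord formula already satisfies $E(\chord(k)) = k\,E(|e_i|^2)$, so the bracket vanishes and $C = 0$, in agreement with the isometry argument in the proof of \prop{ftc}. For the two polygon spaces the extra factor $\tfrac{n-k}{n-1}$ is exactly what breaks the cancellation: substituting $E(\chord(2),\Pol_3(n)) = \tfrac{n-2}{n-1}\cdot\tfrac{12}{n(n+1)}$ against $2\,E(|e_i|^2,\Pol_3(n)) = \tfrac{12}{n(n+1)}$ and simplifying yields $C = -\tfrac{6}{n^3-n}$, and the parallel substitution for $\Pol_2(n)$ yields $C = -\tfrac{8}{(n-1)n(n+2)}$.

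No step here is genuinely hard; the work is bookkeeping driven by the two prior results. The one point deserving care is the permutation-invariance reduction, which lets me treat $C$ as a single scalar rather than a function of $(i,j)$ and simultaneously forces the value extracted from $E(\chord(k))$ to be the same for every $k \ge 2$. That $k$-independence serves as a built-in consistency check, and the same diagonal/off-diagonal split is precisely what produces the two-term structure of \prop{schord}.
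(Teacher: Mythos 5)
Your proposal is correct and follows essentially the same route as the paper: the paper likewise expands $\left<e_i+e_j,e_i+e_j\right>$, invokes the permutation symmetry implicitly, and solves $E(\left<e_i,e_j\right>,\Pol_d(n)) = \frac12 E(\chord(2),\Pol_d(n)) - E(\chord(1),\Pol_d(n))$, which is exactly your $k=2$ specialization since $E(\chord(1)) = E(|e_i|^2)$. The only cosmetic difference is that you set up the identity for general $k$ before specializing (and you correctly note that the vanishing for the arm spaces ultimately rests on the isometry argument of Proposition~\ref{prop:ftc} rather than on the chord formula, which already has that fact built in).
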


\begin{proof}
We computed the expected value on the $\Arm_i$ spaces above in the proof of Proposition~\ref{prop:ftc}. For the $\Pol_i$ spaces, observe that 
\begin{equation*}
\left< e_i + e_j, e_i + e_j \right> = \left< e_i, e_i\right> + 2 \left<e_i,e_j\right> + \left<e_j,e_j\right>
\end{equation*}
Taking the expected value of each side of the equation and rearranging, we get
\begin{equation*}
E(\left<e_i,e_j\right>,\Pol_d(n)) = \frac{1}{2} E(\chord(2),\Pol_d(n)) - E(\chord(1,\Pol_d(n)),
\end{equation*}
which leads directly to the formulae above.
\end{proof}
We note that this formula, together with the computations of covariances of edgelengths in Corollary~\ref{cor:covariance}, gives us an explicit calculation of the pairwise correlations between edges.

\section{Equilateral Polygon Space}
In our theory, the space of equilateral polygonal arms or closed polygons have a special place: they are the only fixed edge-length polygon spaces which are invariant under rearrangement of edges. Our symmetric measure restricts to this space (as a subspace of $\Arm$ or $\Pol$) as the product of measures on round spheres (of dimension 1 or 2) or the subspace of subsets of the product of spheres which sum to zero, and the $\schord$ and $\sgyradius$ formulae of Propositions~\ref{prop:schord} and~\ref{prop:sgyradius} apply as well. Since the expected value of the edgelengths is easy to compute, we immediately get expected values for chord lengths and radius of gyration. Interestingly, they do not depend on the ambient dimension, since the expected value of edgelength is the same in each case.

\begin{proposition}
The expected values of $\chord(k)$ on equilateral arm and polygon spaces is 
\begin{align*}
E(\chord(k),\eArm_i(n)) &= \frac{4k}{n^2}, \quad E(\chord(k),\ePol_i(n)) = \left(\frac{n-k}{n-1}\right)\frac{4k}{n^2}. 
\end{align*}
\label{prop:equilaterals}
\end{proposition}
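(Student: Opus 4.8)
The plan is to reduce the entire statement to \prop{schord}, whose right-hand side depends only on $k$, $n$, the total squared edgelength $\sum_{j=1}^n |e_j|^2$, and the failure-to-close $\ell$. Because the symmetric measure restricted to an equilateral space is invariant under rearrangements of the edges (as noted at the start of this section), the expected values of $\chord(k)$ and $\schord(k,\mathcal{P})$ agree, so it suffices to take the expectation of the formula in \prop{schord}. The key simplification is that an equilateral $n$-gon of total length $2$ has every edge of length $2/n$, so $\sum_{j=1}^n |e_j|^2 = n\,(2/n)^2 = 4/n$ is a deterministic constant; this removes any need for the edgelength moments of \cor{moments}.

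For closed equilateral polygons the second term drops out because $\ell = 0$ on $\ePol_i(n)$. Substituting $\sum_j |e_j|^2 = 4/n$ and $\ell = 0$ into \prop{schord} gives
\[
E(\chord(k),\ePol_i(n)) = \frac{k(n-k)}{n(n-1)}\cdot\frac{4}{n} = \left(\frac{n-k}{n-1}\right)\frac{4k}{n^2},
\]
which is the asserted formula.

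For equilateral arm space the one remaining ingredient is $E(\ell^2,\eArm_i(n))$. Here I would invoke the explicit description recalled at the start of this section: the restricted measure on $\eArm_i(n)$ is the product of uniform measures on $n$ copies of the round sphere $S^{i-1}$. Consequently the edge vectors are independent and, by the symmetry of the sphere, each has mean zero. Expanding $\ell^2 = \bigl|\sum_j e_j\bigr|^2 = \sum_j |e_j|^2 + 2\sum_{i<j}\langle e_i,e_j\rangle$ and taking expectations, every cross term vanishes exactly as in the proof of \prop{ftc}, leaving $E(\ell^2,\eArm_i(n)) = \sum_j |e_j|^2 = 4/n$. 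Feeding $\sum_j|e_j|^2 = 4/n$ and $E(\ell^2) = 4/n$ into \prop{schord} and collecting, the bracket $(n-k)+(k-1) = n-1$ cancels a factor in the denominator and yields $E(\chord(k),\eArm_i(n)) = 4k/n^2$.

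The computation is short, and the only genuine subtlety---the step I would watch most carefully---is the evaluation of $E(\ell^2,\eArm_i(n))$. One must be certain that the independence of the edges (hence the vanishing of the cross terms) really holds for the restricted measure, and this is precisely what the product-of-spheres structure of the equilateral measure guarantees; without that structural fact one could not bypass a direct failure-to-close computation. Everything else is bookkeeping on top of the already-established \prop{schord}.
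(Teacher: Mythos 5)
Your proposal is correct and follows essentially the same route as the paper: the paper's (very terse) argument is exactly to restrict the symmetric measure to the equilateral subspace, note that rearrangement invariance lets Proposition~\ref{prop:schord} compute $E(\chord(k))$, substitute the deterministic value $\sum_j |e_j|^2 = 4/n$, and use $\ell = 0$ for closed polygons and $E(\ell^2) = n\,(2/n)^2 = 4/n$ (via the mean-zero, product-of-spheres structure, as in Proposition~\ref{prop:ftc}) for arms. Your write-up simply makes explicit the bookkeeping the paper leaves to the reader.
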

It is interesting to compare this to the mean of the approximate pdf $h_k(r)$ for the $k$th chord length $r$ of a (closed) equilateral space polygon of length $n$ given in \cite{Varela:2009cd}:
\begin{equation*}
h_k(r) = \left( \frac{3}{2\pi \frac{k(n-k)}{n}} \right)^{\frac{3}{2}} 4 \pi r^2 e^{-\frac{3}{2} \frac{n}{k(n-k)} r^2}.
\end{equation*}
We compute that $\int_0^\infty r^2 h_k(r) \dr = \frac{k(n-k)}{n}$,
so that (rescaling appropriately) the mean of $r^2$ with respect to the pdf $h_k(r)$ is $\frac{k(n-k)}{n} \frac{4}{n^2}$, which converges to the result of Proposition~\ref{prop:equilaterals}.

Similarly, we can compute the expected value of radius of gyration:
\begin{proposition}
The expected value of $\gyradius$ on equilateral arm and polygon spaces is
\begin{equation*}
E(\gyradius,\eArm_i(n)) = \frac{2}{3} \frac{n+2}{n(n+1)}, \quad E(\gyradius,\ePol_i(n)) = \frac{1}{3} \frac{n+1}{n^2}.
\end{equation*}
\label{prop:eqgyradius}
\end{proposition}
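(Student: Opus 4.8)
The plan is to reduce everything to the two closed-form expressions for $\sgyradius$ in \prop{sgyradius}, and to exploit the fact that, because the symmetric measure on equilateral space is invariant under permutation of the edges, the expected value of $\gyradius$ agrees with that of $\sgyradius$. The key simplification is that on equilateral space every edge has the fixed length $|e_j| = \nicefrac{2}{n}$, so $\sum_{j=1}^n |e_j|^2 = n \cdot \nicefrac{4}{n^2} = \nicefrac{4}{n}$ is a \emph{constant}. This means the expectation distributes through the formulae of \prop{sgyradius} linearly, and the only quantity left to control is the expected squared failure to close, $E(\ell^2)$, for the arm case.

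For the closed spaces $\ePol_i(n)$ the argument is immediate, since $\ell = 0$. Substituting $\sum_{j=1}^n |e_j|^2 = \nicefrac{4}{n}$ into the closed formula $\sgyradius(\mathcal{P}) = \frac{n+1}{12n}\left(\sum_{j=1}^n |e_j|^2\right)$ gives $\frac{n+1}{12n}\cdot\frac{4}{n}$, which simplifies to the claimed value $\frac{1}{3}\frac{n+1}{n^2}$. For the arm spaces $\eArm_i(n)$ I would first compute $E(\ell^2,\eArm_i(n))$. Because the restricted measure on $\eArm_i(n)$ is the product of uniform measures on $n$ round spheres, the edge vectors $e_1,\dots,e_n$ are independent, each uniformly oriented with fixed length $\nicefrac{2}{n}$; hence $E(e_i)=0$ and $E(\langle e_i,e_j\rangle)=\langle E(e_i),E(e_j)\rangle = 0$ for $i\neq j$, exactly as in the symmetry argument used to prove \prop{ftc}. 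Expanding $\ell^2 = |\sum_i e_i|^2$ and taking expectations then kills the cross terms, leaving $E(\ell^2) = \sum_i E(|e_i|^2) = n\cdot\nicefrac{4}{n^2} = \nicefrac{4}{n}$. Substituting both $\sum_{j=1}^n |e_j|^2 = \nicefrac{4}{n}$ and $E(\ell^2)=\nicefrac{4}{n}$ into the arm formula $\frac{n+2}{12(n+1)}\left(\sum_{j=1}^n |e_j|^2 + \ell^2\right)$ yields $\frac{n+2}{12(n+1)}\cdot\frac{8}{n}$, which simplifies to $\frac{2}{3}\frac{n+2}{n(n+1)}$.

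The computation is essentially mechanical, and I do not anticipate a genuine obstacle. The one point requiring care is the passage from the ensemble-averaged $\sgyradius$, which is defined for a single edge set ensemble and so assumes a \emph{fixed} $\ell$, to the full expectation over $\eArm_i(n)$, where $\ell$ fluctuates across the space. This is resolved precisely by the observation above that $\sum_{j=1}^n |e_j|^2$ is constant on equilateral space: the expectation then separates as $E(\gyradius) = E(\sgyradius) = \frac{n+2}{12(n+1)}\left(\nicefrac{4}{n} + E(\ell^2)\right)$, so one simply replaces $\ell^2$ by its mean. (As a sanity check, multiplying the arm answer by the closure factor $\frac{n-k}{n-1}$-style reasoning is unnecessary here, since the results should, and do, match the edgelength-driven scaling already visible in \prop{equilaterals}.)
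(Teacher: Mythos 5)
Your argument is correct and is essentially the paper's (implicit) proof: restrict to equilateral space where $\sum_j |e_j|^2 = \nicefrac{4}{n}$ is constant, use permutation-invariance of the restricted measure so that $E(\gyradius)=E(\sgyradius)$, plug into the two formulae of Proposition~\ref{prop:sgyradius}, and for arms insert $E(\ell^2)=\nicefrac{4}{n}$, which follows from independence of the uniformly oriented edges exactly as in Proposition~\ref{prop:ftc}. The arithmetic checks out in both cases, so there is nothing to add.
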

As in Corollary~\ref{cor:dotproduct} we can compute the expected value of the inner product of two edges as
\begin{corollary}
The expected value of the inner product $\left< e_i, e_j \right>$ on equilateral arm and polygon spaces is 
\begin{equation*}
E(\left< e_i, e_j \right>,\eArm_i(n)) = 0, \quad E(\left< e_i, e_j \right>,\ePol_i(n)) = -\frac{4}{n^2}\frac{1}{n-1}.
\end{equation*}
\label{cor:edotproduct}
\end{corollary}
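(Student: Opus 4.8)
The plan is to mirror exactly the two-part strategy used for Corollary~\ref{cor:dotproduct}, substituting the equilateral chord expectations of Proposition~\ref{prop:equilaterals} for the generic ones. Nothing genuinely new is needed; the content is recognizing that every ingredient transfers to the equilateral subspace.

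First I would dispose of the arm case. On $\eArm_i(n)$ the symmetric measure restricts to the product measure on $n$ independent copies of a round sphere (of dimension $2$ or $1$) of fixed radius, so for $i \neq j$ the edges $e_i$ and $e_j$ are independent. Writing the inner product in coordinates and using independence, $E(\left<e_i,e_j\right>)$ factors as $\sum_m E(e_i^{(m)}) E(e_j^{(m)})$; since each edge is uniformly distributed on a sphere centered at the origin, every coordinate has mean zero, so the expectation vanishes. Equivalently, the antipodal isometry $e_i \mapsto -e_i$ reverses the sign of $\left<e_i,e_j\right>$, exactly as in the proof of Proposition~\ref{prop:ftc}.

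For the polygon case I would reuse the polarization identity from the proof of Corollary~\ref{cor:dotproduct}. Expanding $\left<e_i + e_j, e_i + e_j\right> = |e_i|^2 + 2\left<e_i,e_j\right> + |e_j|^2$, taking expectations, and using that $E(|e_i + e_j|^2) = E(\chord(2),\ePol_i(n))$ while $E(|e_i|^2) = E(\chord(1),\ePol_i(n))$ (both by rearrangement invariance) gives
\begin{equation*}
E(\left<e_i,e_j\right>,\ePol_i(n)) = \tfrac12 E(\chord(2),\ePol_i(n)) - E(\chord(1),\ePol_i(n)).
\end{equation*}
Plugging in the equilateral values $E(\chord(1),\ePol_i(n)) = \tfrac{4}{n^2}$ and $E(\chord(2),\ePol_i(n)) = \tfrac{n-2}{n-1}\tfrac{8}{n^2}$ from Proposition~\ref{prop:equilaterals} and simplifying yields $-\tfrac{4}{n^2(n-1)}$, as claimed.

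The only real obstacle is bookkeeping: one must confirm that the polarization identity carries over unchanged to the equilateral subspace. It does, because Propositions~\ref{prop:schord} and~\ref{prop:sgyradius}, and hence the expected chord formulas, hold verbatim on equilateral polygon space (the equilateral spaces being the fixed-edgelength spaces that remain invariant under edge rearrangement, as noted at the start of this section). Thus no new symmetry or measure-theoretic input is required beyond what was already established for the general spaces.
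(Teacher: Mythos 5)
Your proposal is correct and follows essentially the same route as the paper, which proves this corollary simply by invoking the argument of Corollary~\ref{cor:dotproduct}: the arm case via the symmetry/independence argument of Proposition~\ref{prop:ftc} restricted to the product-of-spheres measure, and the polygon case via the polarization identity $E(\left<e_i,e_j\right>) = \tfrac12 E(\chord(2)) - E(\chord(1))$ with the equilateral chord expectations from Proposition~\ref{prop:equilaterals}. The arithmetic checks out: $\tfrac{n-2}{n-1}\tfrac{4}{n^2} - \tfrac{4}{n^2} = -\tfrac{4}{n^2(n-1)}$.
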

This last formula (accounting for the fact that our polygons are scaled differently) recovers the formula of Grosberg~\cite{Grosberg:2008dl} for the expected value of the inner product for equilateral closed polygons. The slightly negative expectation seems to come from the fact that for any edge $e_i$ of a closed $n$-gon, the other $n-1$ edges must add up to $-e_i$ in order to close the polygon. Corollary~\ref{cor:dotproduct} shows the same phenomenon for our larger polygon spaces. 

We saw before in Corollary~\ref{cor:variance} that the variance of edgelength is going to zero as $n \rightarrow \infty$ and the expected value of edgelength goes to zero. However, if we rescale our polygons to length $2n$, the variance of edgelength goes to 2 for (closed) space polygons and 4 for (closed) plane polygons-- that is, the \emph{relative} variance of edgelength does not go to zero. Thus, while the expected value of $\gyradius$ goes to zero as $n \rightarrow \infty$ for equilateral polygons as well as our original polygons, the expected value of $\gyradius$ for equilateral rescaled polygons shouldn't converge to the corresponding expected value of $\gyradius$ for our original polygons rescaled to length $2n$. In fact, they don't: since $\gyradius$ scales quadratically with length, scaling to length $2n$ gives us an expected $\gyradius$ of $\nicefrac{4}{3} (n + 1)$ for (space or plane) equilateral polygons. On the other hand, Proposition~\ref{prop:egyradius} tells us that the corresponding expectation for space polygons is $4n$ while the expectation for plane polygons is $\frac{8}{3} \frac{n(n+1)}{n+2}$.

\section{Edgelength Distributions}
\label{sec:pdfs}

We can say more precisely how far our polygons are from being equilateral by analyzing the probability density function of edgelength. Note that the $i$th edgelength of an arm can be anything from $0$ to $2$, so the corresponding probability density function is a function on $[0,2]$, whereas polygon edgelengths must be $\leq 1$, so the probability density function for polygons has domain $[0,1]$. 

\begin{proposition}\label{prop:pdfs}
	The probability density functions for the edgelength $|e_i|$ of spatial and planar arms are given by
	\[
		\phi_3^\mathrm{Arm} (y) = \frac{(2n-2)(2n-1)}{2^{2n-1}}y(2-y)^{2n-3},\qquad \phi_2^\mathrm{Arm} (y) = \frac{n-1}{2^{n-1}}(2-y)^{n-2}
	\]
	and the probability density functions for the edgelength $|e_i|$ of spatial and planar polygons are
	\[
		\phi_3^\mathrm{Pol} (y) = (n-2)(n-1)y(1-y)^{n-3},\qquad \phi_2^\mathrm{Pol} (y) = (\nicefrac{n}{2}-1)(1-y)^{\nicefrac{n}{2}-2}.
	\]
\end{proposition}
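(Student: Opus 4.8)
The plan is to recognize each of the four edgelength distributions as a Beta distribution (after a linear rescaling in the arm cases) and then to read off the density by a change of variables, simplifying the beta-function normalizing constant. Two classical facts drive the argument: for a point distributed uniformly on the unit sphere $S^{D-1} \subset \R^D$, the squared norm of any block of $d$ coordinates is $\operatorname{Beta}(d/2,(D-d)/2)$-distributed; and for a uniformly (Haar-)distributed $\F$-orthonormal $n \times k$ frame, the squared norm of a single row is $\operatorname{Beta}(dk/2, d(n-k)/2)$-distributed, where $d = \dim_\R \F$ (see \cite{Chikuse:2003we}). As a consistency check, each Beta distribution I produce has exactly the moments recorded in \prop{moments}.

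For the arm spaces, a point of the model sphere $S^{4n-1} \subset \Q^n$ (resp.\ $S^{2n-1} \subset \C^n$) of radius $\sqrt2$ has $i$th edgelength $|e_i| = |q_i|^2$ (resp.\ $|z_i|^2$), which is $2$ times the squared norm of a single $\Q$- (resp.\ $\C$-) coordinate of the \emph{unit} sphere point. For $\Arm_3(n)$ this is a block of $d=4$ real coordinates out of $D=4n$, so $|e_i|/2 \sim \operatorname{Beta}(2,2n-2)$; for $\Arm_2(n)$ it is $d=2$ out of $D=2n$, so $|e_i|/2 \sim \operatorname{Beta}(1,n-1)$. Writing $y = 2z$ and using $\Beta(2,2n-2) = 1/((2n-1)(2n-2))$ and $\Beta(1,n-1) = 1/(n-1)$ to clear the normalizing constants, the change-of-variables factor $\tfrac12$ together with $(1-y/2)^m = (2-y)^m/2^m$ collapses the powers of $2$ into the stated $\phi_3^{\mathrm{Arm}}$ and $\phi_2^{\mathrm{Arm}}$.

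For the polygon spaces, an element of $V_2(\C^n)$ (resp.\ $V_2(\R^n)$) is an $n \times 2$ frame whose $i$th row has squared norm $|e_i| = |a_i|^2 + |b_i|^2$ (resp.\ $a_i^2 + b_i^2$), and here the edgelengths already lie in $[0,1]$, so no rescaling is needed. Applying the Stiefel row fact with $k=2$ gives $|e_i| \sim \operatorname{Beta}(2,n-2)$ on $\Pol_3(n)$ (where $\F = \C$, $d=2$) and $|e_i| \sim \operatorname{Beta}(1,n/2-1)$ on $\Pol_2(n)$ (where $\F = \R$, $d=1$). Substituting $\Beta(2,n-2) = 1/((n-1)(n-2))$ and $\Beta(1,n/2-1) = 1/(n/2-1)$ into the Beta density $y^{\alpha-1}(1-y)^{\beta-1}/\Beta(\alpha,\beta)$ produces $\phi_3^{\mathrm{Pol}}$ and $\phi_2^{\mathrm{Pol}}$ directly.

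The one genuinely non-elementary ingredient is the distribution of a single row of a uniform Stiefel frame used for the polygon cases: unlike the sphere block (where the block and its complement behave like independent Gaussians conditioned to the sphere), the orthonormality constraint couples the two columns, and one must also track the doubling of the Beta parameters from real to complex scalars. If one prefers to avoid quoting this fact, an alternative is to bypass it entirely: since all four distributions are supported on the compact interval $[0,2]$ or $[0,1]$, the Hausdorff moment problem is determinate, so it suffices to check that each claimed density reproduces every integer moment from \prop{moments} --- a short computation using $\int_0^1 y^{\alpha-1+p}(1-y)^{\beta-1}\,dy = \Beta(\alpha+p,\beta)$ --- and then to conclude by uniqueness.
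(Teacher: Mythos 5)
Your argument is correct, but it follows a genuinely different route from the paper's. The paper proves Proposition~\ref{prop:pdfs} with the coarea formula on each model space: it computes the intrinsic gradient of the edgelength function $F_i$ and then the codimension-one volume of the level set $F_i^{-1}(y)$ --- a product of spheres of radii $\sqrt{y}$ and $\sqrt{2-y}$ in the arm cases, and, via Lemma~\ref{lem:volumeOfEdgeSpace}, a hypersurface in $V_2(\C^n)$ (resp.\ $V_2(\R^n)$) analyzed as a bundle over a smaller Grassmannian with fibers scaled by $\sqrt{y}$ and $\sqrt{1-y}$. You instead identify each law directly as a beta distribution: the coordinate-block fact on the sphere disposes of the arm cases elementarily, and the Stiefel row fact handles the polygon cases; your parameters and normalizations all check out against the stated densities, and your bookkeeping of the radius-$\sqrt{2}$ rescaling for arms versus the $[0,1]$-valued row norms for polygons is right. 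The one ingredient you quote rather than prove --- that a row of a Haar frame has squared norm $\operatorname{Beta}(dk/2,d(n-k)/2)$ --- is in fact only a step away from the sphere case, since the squared row norm equals $\|P_V e_i\|^2$ for $V$ the column span, which by $U(n)$- or $O(n)$-invariance has the same law as the squared norm of a $k$-coordinate block of a uniform point on the unit sphere; spelling out that reduction would make your first argument self-contained. Your fallback via the Hausdorff moment problem is also legitimate, since Proposition~\ref{prop:moments} supplies every positive integer moment and the supports are compact, so determinacy applies. What the paper's route buys is explicit geometric information (the volumes and bundle structure of the edgelength level sets, and the link to Jianwei's Morse function); what yours buys is brevity and a conceptual explanation of \emph{why} beta distributions appear.
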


\begin{figure}[htbp]
\centering
\subfloat[Arms in space]{\begin{tabular}{ccccc}
\includegraphics[height=.7in]{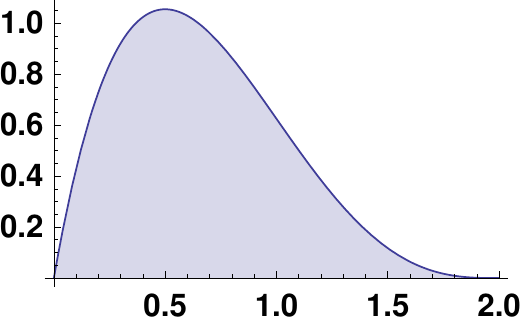} & 
\includegraphics[height=.7in]{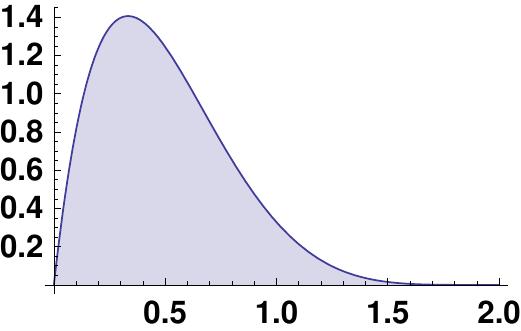} & 
\includegraphics[height=.7in]{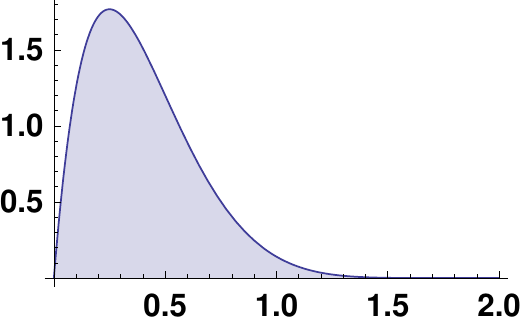} & 
\includegraphics[height=.7in]{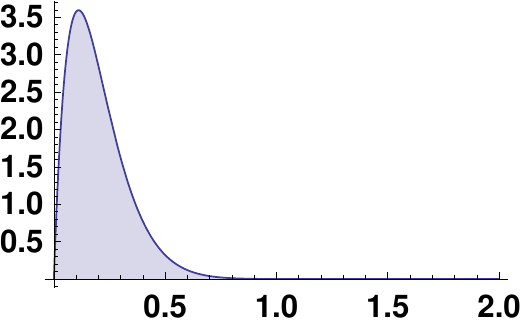} & 
\includegraphics[height=.7in]{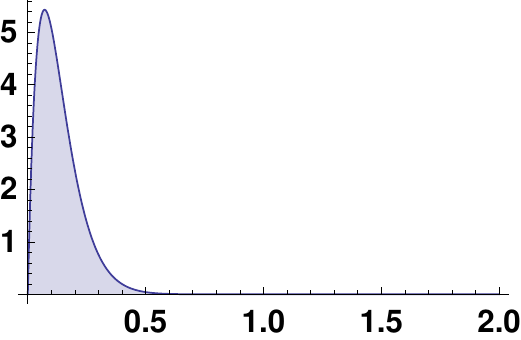} \\
{\footnotesize 3 edges} & {\footnotesize 4 edges} & {\footnotesize 5 edges} & {\footnotesize 10 edges} & {\footnotesize 15 edges}
\end{tabular}}

\subfloat[Arms in the plane]{\begin{tabular}{ccccc}
\includegraphics[height=.7in]{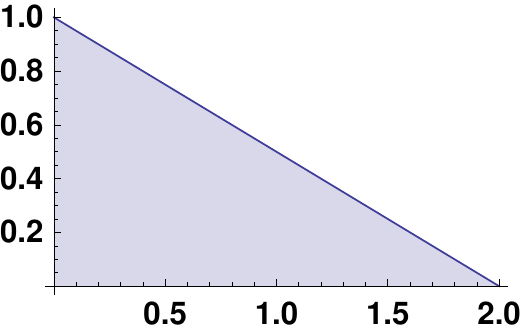} & 
\includegraphics[height=.7in]{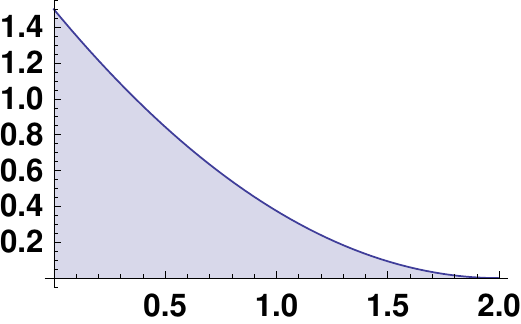} & 
\includegraphics[height=.7in]{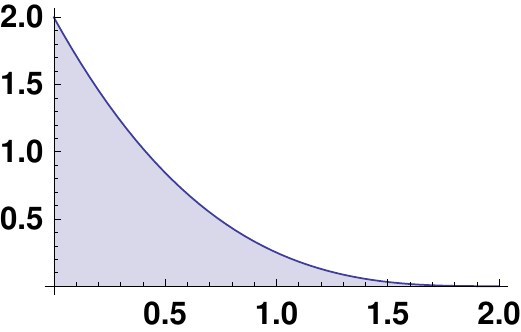} & 
\includegraphics[height=.7in]{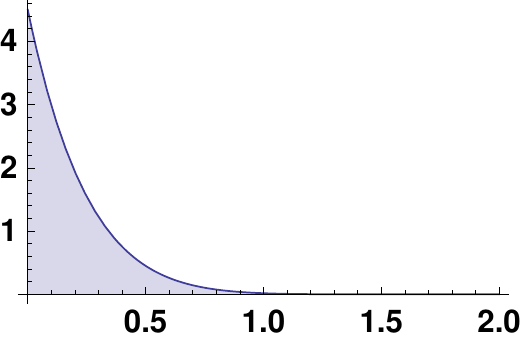} & 
\includegraphics[height=.7in]{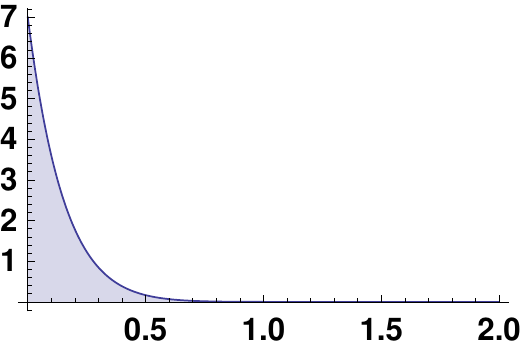} \\
{\footnotesize 3 edges} & {\footnotesize 4 edges} & {\footnotesize 5 edges} & {\footnotesize 10 edges} & {\footnotesize 15 edges}
\end{tabular}}

\subfloat[Polygons in space]{\begin{tabular}{ccccc}
\includegraphics[height=.7in]{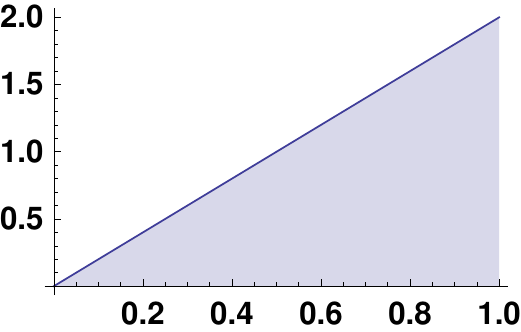} & 
\includegraphics[height=.7in]{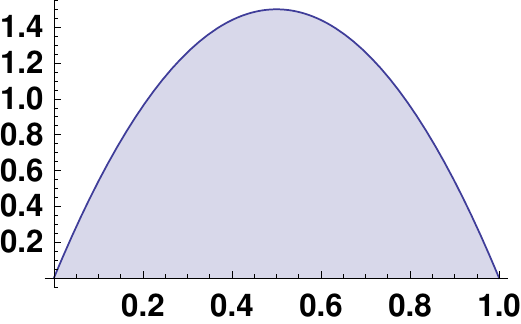} & 
\includegraphics[height=.7in]{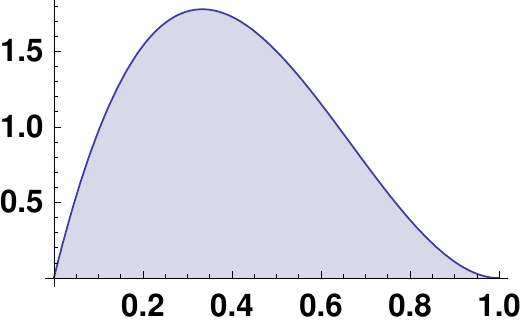} & 
\includegraphics[height=.7in]{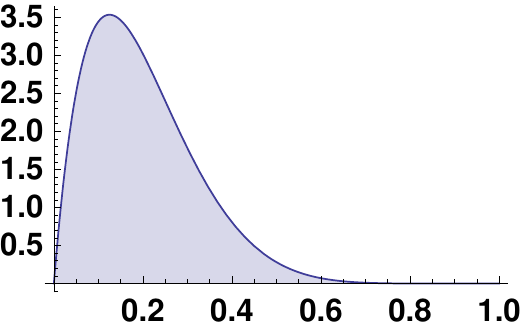} & 
\includegraphics[height=.7in]{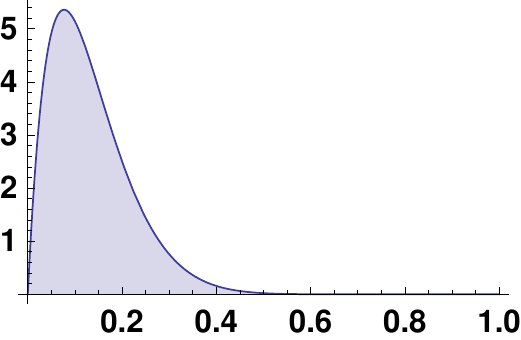} \\
{\footnotesize 3 edges} & {\footnotesize 4 edges} & {\footnotesize 5 edges} & {\footnotesize 10 edges} & {\footnotesize 15 edges}
\end{tabular}}

\subfloat[Polygons in the plane]{\begin{tabular}{ccccc}
\includegraphics[height=.7in]{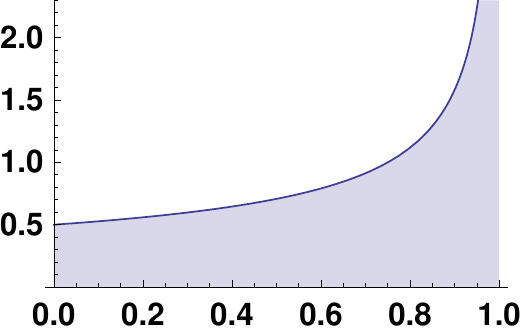} & 
\includegraphics[height=.7in]{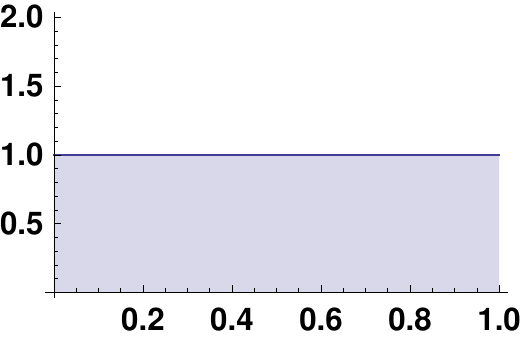} & 
\includegraphics[height=.7in]{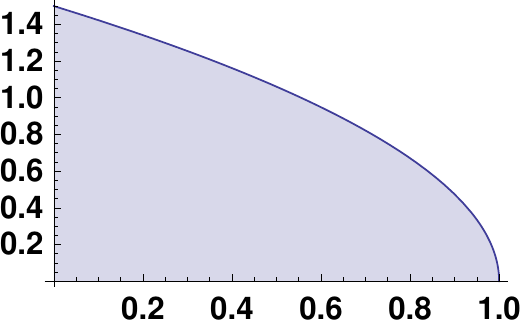} & 
\includegraphics[height=.7in]{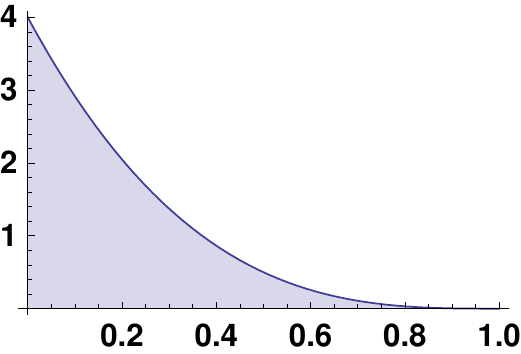} & 
\includegraphics[height=.7in]{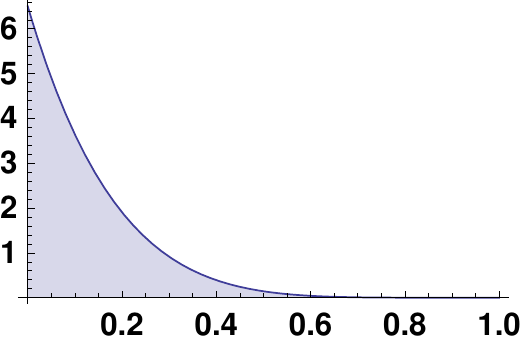} \\
{\footnotesize 3 edges} & {\footnotesize 4 edges} & {\footnotesize 5 edges} & {\footnotesize 10 edges} & {\footnotesize 15 edges}
\end{tabular}}

\caption{Graphs of the probability density functions for the $i$th edgelength of arms and polygons in space and in the plane. 
\label{fig:pdfs}}
\end{figure}

Graphs of these functions for a few values of $n$ are shown in Figure~\ref{fig:pdfs}. Notice that each of the above is the probability density function of a beta distribution on the appropriate domain, as summarized in the following table:

\begin{center}
\begin{tabular*}{0.5\textwidth}{@{\extracolsep{\fill}}lcc}
	  & \multicolumn{2}{c}{Beta distribution with} \\
	& domain &  shape parameters \\
	\hline
	$\phi_3^\mathrm{Arm}(y)$ & $[0,2]$ & $2$, $2n-2$ \\
	$\phi_2^\mathrm{Arm}(y)$ & $[0,2]$ & $1$, $n-1$ \\
	$\phi_3^\mathrm{Pol}(y)$ & $[0,1]$ & $2$, $n-2$ \\
	$\phi_2^\mathrm{Pol}(y)$ & $[0,1]$ & $1$, $\nicefrac{n}{2} - 1$\\
\end{tabular*}
\end{center}

The appearance of beta distributions may seem somewhat surprising, but this is actually to be expected: given the interval $[0,2]$ with $n-1$ points chosen uniformly on it (which we might think of as an ``abstract'' arm), the complement of those points consists of (generically) $n$ subintervals whose lengths follow a beta distribution (the argument below is standard and can be found in, e.g., Feller~\cite{Feller}).

More specifically, the length $L_1$ of the first subinterval is greater than $y$ if and only if all of the $n-1$ chosen points lie in the interval $(y,2]$, which happens with probability $\frac{1}{2^{n-1}}(2-y)^{n-1}$. Therefore, the cumulative distribution function of $L_1$ is $1-\frac{1}{2^{n-1}}(2-y)^{n-1}$ and so the probability density function is the derivative
\begin{equation}\label{eqn:abstractPDF}
	\phi(y) = \frac{n-1}{2^{n-1}}(2-y)^{n-2}.
\end{equation}
Moreover, the lengths of all of the $n$ subintervals follow the same distribution, as can be seen by noticing that choosing $n-1$ points uniformly on $[0,2]$ is the same as choosing $n$ points uniformly on a circle of circumference $2$ and cutting at the first point.

Notice that the pdf in \eqref{eqn:abstractPDF} is exactly the probability density function of edgelength for planar arms given in Proposition~\ref{prop:pdfs}, so in our model for planar arms the edgelength distributions are insensitive to how the arms lie in the plane. Of course the same \emph{cannot} be true for polygons, and indeed the edgelength distributions for our polygon spaces are different from the above distribution. Perhaps more surprising is that in our model for spatial arms the edgelength distribution does not match the above.

\begin{proof}[Proof of Proposition~\ref{prop:pdfs}]
	Define the map $F_i: S^{4n-1} \to [0,2]$ by $(q_1, \ldots , q_n) \mapsto |q_i|^2$, which is just the length of the $i$th edge of the corresponding arm (recall that, as in Section~\ref{sec:armSpace}, the total space lying over $\Arm_3(n)$ is the sphere of radius $\sqrt{2}$ in $\Q^n$). Since the probability density function on $S^{4n-1}$ is uniform with respect to the standard metric, we can compute the corresponding probability density function $\phi_3^\mathrm{Arm}$ on $[0,2]$ using the coarea formula as
	\begin{equation}\label{eq:pdfv1}
		\phi_3^\mathrm{Arm}(y) = \frac{1}{\Vol S^{4n-1}(\sqrt{2})} \int_{\vec{q} \in F_i^{-1}(y)} \frac{1}{|\nabla F_i(\vec{q})|} \dVol_{F_i^{-1}(y)},
	\end{equation}
	where $\nabla$ indicates the intrinsic gradient in $S^{4n-1}$, $F_i^{-1}(y)$ is the hypersurface corresponding to arms with $i$th edge of length $y$ and the measure $\dVol_{F_i^{-1}(y)}$ comes from the subspace metric on this space as a submanifold of $S^{4n-1}$. 
	
	If $q_i = q_{i,1} + q_{i,2}\I + q_{i,3}\J + q_{i,4}\K$, then $F_i(\vec{q}) = q_{i,1}^2 + q_{i,2}^2 + q_{i,3}^2 + q_{i,4}^2$ and so the extrinsic gradient of $F_i$ is
	\[
		\nabla_{\Q^n}F_i(\vec{q}) = 2q_{i,1}\vec{x}_{i,1} + 2q_{i,2}\vec{x}_{i,2} + 2q_{i,3}\vec{x}_{i,3} + 2q_{i,4}\vec{x}_{i,4}.
	\]
	Since the unit normal to $\vec{q}$ is just $\vec{n} = \frac{\vec{q}}{\sqrt{2}}$, we see that $\langle \vec{n},\nabla_{\Q^n}F_i(\vec{q})\rangle = \sqrt{2}F_i(\vec{q})$, and so
	\[
		|\nabla F_i(\vec{q})| = \sqrt{|\nabla_{\Q^n}F_i(\vec{q})|^2 - \langle \vec{n},\nabla_{\Q^n}F_i(\vec{q})\rangle^2} = \sqrt{2}\sqrt{F_i(\vec{q})}\sqrt{2-F_i(\vec{q})}.
	\]
	Since $|\nabla F_i(\vec{q})|$ is constant on $F_i^{-1}(y)$, equation \eqref{eq:pdfv1} simplifies as
	\begin{equation}\label{eq:pdfv2}
		\phi_3^\mathrm{Arm}(y) = \frac{1}{\sqrt{2}\sqrt{y}\sqrt{2-y}} \cdot \frac{\Vol F_i^{-1}(y)}{\Vol S^{4n-1}(\sqrt{2})}.
	\end{equation}
	
	If $F_i(\vec{q}) = |q_i|^2 = y$, then $q_i$ lives on a $3$-sphere of radius $\sqrt{y}$, while the remaining $n-1$ quaternionic coordinates lie on a $(4n-5)$-sphere of radius $\sqrt{2-y}$. Therefore,
	\[
		\Vol F_i^{-1}(y) = \left(\Vol S^3\left(\sqrt{y \vphantom{2}}\right)\right) \left(\Vol S^{4n-5}\left(\sqrt{2-y}\right)\right) = 2\pi^2 \sqrt{y}^3 \sqrt{2-y}^{4n-5} \Vol S^{4n-5}(1).
	\]
	Combining this with \eqref{eq:pdfv2} yields
	\[
		\phi_3^\mathrm{Arm}(y) = \sqrt{2}\pi^2y(2-y)^{2n-3} \cdot \frac{\Vol S^{4n-5}(1)}{\Vol S^{4n-1}(\sqrt{2})} = \frac{(2n-2)(2n-1)}{2^{2n-1}}y(2-y)^{2n-3},
	\]
	as desired.
	
	Completely analogous reasoning yields the probability density function for planar arms since the intrinsic gradient of edgelength is exactly the same in this case.
	
	For space polygons, the map $F_i$ defined above induces the $i$th edgelength map $F_i: V_2(\C^n) \to [0,1]$ which is given by $(a,b) \mapsto |a_i|^2 + |b_i|^2$. Using the coarea formula, the probability density function on $[0,1]$ is
	\begin{equation}\label{eq:polpdfv1}
		\phi_3^\mathrm{Pol}(y) = \frac{1}{\Vol V_2(\C^n)}\int_{(a,b) \in F_i^{-1}(y)} \frac{1}{|\nabla F_i(a,b)|} \dVol_{F_i^{-1}(y)}.
	\end{equation}
	
	The rest of the argument proceeds as with arms, with the following variations. First, a slight modification of the earlier calculation shows that, for $(a,b) \in F_i^{-1}(y)$, the edgelength function $F_i$ has intrinsic gradient
	\[
		|\nabla F_i(a,b)| = 2 \sqrt{y}\sqrt{1-y}
	\]
	as Jianwei~\cite{Jianwei:2006wl} also observed in the process of calculating the homology of real and complex Grassmannians using a (degenerate) Morse function which reduces to $F_i$ for Grassmannians of $2$-planes. Therefore, \eqref{eq:polpdfv1} simplifies as
	\begin{equation}\label{eq:polpdfv2}
		\phi_3^\mathrm{Pol}(y) = \frac{1}{2 \sqrt{y}\sqrt{1-y}}\cdot \frac{\Vol F_i^{-1}(y)}{\Vol V_2(\C^n)}.
	\end{equation}
	
	The other major difference from arms is in the volume of $F_i^{-1}(y)$, which is given by the following lemma.

\begin{lemma}\label{lem:volumeOfEdgeSpace}
	The hypersurface in $V_2(\C^n)$ corresponding to spatial $n$-gons with $i$th edge of length~$y$ has $(4n-5)$-dimensional volume
	\begin{equation}\label{eq:propVolumeSpatial} 
		\Vol F_i^{-1}(y) = 2\pi^2 \sqrt{y}^3 \sqrt{1-y}^{2n-5} \Vol \, V_2(\C^{n-1}).
	\end{equation}
			
	Similarly, the hypersurface in $V_2(\R^n)$ corresponding to planar $n$-gons with $i$th edge of length~$y$ has $(2n-4)$-dimensional volume
	\begin{equation}\label{eq:propVolumePlanar}
		2\pi \sqrt{y} \sqrt{1-y}^{n-3} \Vol\, V_2(\R^{n-1}).
	\end{equation}
\end{lemma}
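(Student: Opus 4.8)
The plan is to use the matrix picture of the Stiefel manifold together with a fibration over a small sphere, computing by hand the one genuinely new Jacobian factor. First I would identify a point of $V_2(\C^n)$ with an $n\cross 2$ complex matrix $A$ whose columns are orthonormal, so that $A^* A = I_2$ and the $i$th edgelength $F_i(A) = |a_i|^2 + |b_i|^2$ is exactly the squared norm of the $i$th row $R_i \in \C^2$. Since permuting coordinates is realized by a (unitary) permutation matrix acting isometrically and preserving each $F_i$, I may take $i = n$. Splitting off the last row, a frame in $F_n^{-1}(y)$ consists of a row $R_n$ on the sphere $S^3(\sqrt{y}) \subset \C^2$ together with the $(n-1)\cross 2$ block $A'$ of remaining rows, and column-orthonormality becomes $A'^* A' = I_2 - R_n^* R_n =: G$. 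The matrix $G$ is Hermitian positive definite with eigenvalues $1$ and $1 - y$ for \emph{every} $R_n \in S^3(\sqrt y)$; this is the structural fact that makes the final answer clean.

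Next I would parametrize the fiber. Writing $A' = B\,G^{1/2}$ with $G^{1/2}$ the positive square root gives $B^* B = G^{-1/2} A'^* A' G^{-1/2} = I_2$, so $B \mapsto B\,G^{1/2}$ is a diffeomorphism from $V_2(\C^{n-1})$ onto $\{A' : A'^* A' = G\}$, and hence $(R_n, B) \mapsto A$ is a diffeomorphism $S^3(\sqrt y) \cross V_2(\C^{n-1}) \to F_n^{-1}(y)$. The volume is then $\int J \,\dVol$, where $J$ is the volume distortion of this parametrization relative to the product metric; the only remaining task is to show $J$ is the constant $\sqrt{1-y}^{\,2n-5}$, after which
\[
	\Vol F_n^{-1}(y) = J \cdot \Vol S^3(\sqrt y)\cdot \Vol V_2(\C^{n-1}) = 2\pi^2\sqrt y^3\sqrt{1-y}^{\,2n-5}\Vol V_2(\C^{n-1}),
\]
using $\Vol S^3(\sqrt y) = 2\pi^2 y^{3/2}$.

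To compute $J$ I would use the isometric right $U(2)$-action (which preserves every $F_i$, since right multiplication by a unitary fixes row norms) to assume $R_n = (\sqrt y, 0)$, so that $G^{1/2} = \operatorname{diag}(\sqrt{1-y}, 1)$. The metric on $V_2(\C^n)$ is the Frobenius metric $|dA|^2 = |dR_n|^2 + |dA'|^2$, and since $A' = B\,G^{1/2}$ depends on $R_n$ through $G$, the differential $dA' = (dB)G^{1/2} + B\,d(G^{1/2})$ couples the base direction $dR_n$ to the fiber. I would expand $dB$ in the standard parametrization of $T_B V_2(\C^{n-1})$ (imaginary diagonal parts $ia_1, ia_2$, one off-diagonal $q_1 \in \C$ whose partner is fixed by the skew-Hermitian condition, and components $u_1, u_2$ orthogonal to the frame), solve the Sylvester equation $G^{1/2}\,d(G^{1/2}) + d(G^{1/2})\,G^{1/2} = dG$ for $d(G^{1/2})$ in terms of $dR_n$, and assemble the induced quadratic form. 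Most directions ($a_1, a_2, u_1, u_2$, and the circle direction of $R_n$) decouple and contribute fixed powers of $\sqrt{1-y}$; the only interaction is a $4$-real-dimensional block mixing the off-diagonal frame variation $q_1$ with the free complex direction $\dot b_n$ of $R_n$.

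The hard part --- and the place where the ``slight modification'' of the arm computation hides real content --- is precisely this coupled block: I expect its determinant to collapse to the constant $4$ after substituting $s^2 = \tfrac{1-\sqrt{1-y}}{1+\sqrt{1-y}}$ (read off from the Sylvester solution), exactly cancelling the factor $4$ coming from the non-orthonormal $q_1$-coordinate in the product metric on $V_2(\C^{n-1})$. Granting this cancellation, the total distortion is $\sqrt{1-y}^{\,2n-5}$, independent of $R_n$ and $B$, which finishes the complex case. The planar statement follows by running the identical argument over $\R$: rows lie on $S^1(\sqrt y)$, $G = I_2 - R_i^{T}R_i$ again has eigenvalues $1$ and $1-y$, one parametrizes by $V_2(\R^{n-1})$, and the analogous (now smaller) coupled-block cancellation yields Jacobian $\sqrt{1-y}^{\,n-3}$, which together with $\Vol S^1(\sqrt y) = 2\pi\sqrt y$ gives the stated formula.
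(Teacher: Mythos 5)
Your argument is correct, and it takes a genuinely different route from the paper's. The paper uses the $U(2)$-invariance of $F_i$ to descend to $\widehat{F}_i$ on the Grassmannian $G_2(\C^n)$, identifies the level set $\widehat{F}_i^{-1}(y)$ (citing Jianwei and one other reference) as simultaneously an $SU(2)$-bundle over $G_2(\C^{n-1})$ with fibers scaled by $\sqrt{y}$ and an $S^{2n-5}$-bundle over $\C P^{n-2}$ with fibers scaled by $\sqrt{1-y}$, reads the volume off from the two scalings, and multiplies back by $\Vol U(2)$. You instead stay in the Stiefel manifold and build an explicit diffeomorphism $S^3(\sqrt{y}) \cross V_2(\C^{n-1}) \to F_n^{-1}(y)$ via the factorization $A' = B\,G^{1/2}$, reducing everything to a single Jacobian computation; this is more self-contained and turns the constancy of the volume distortion into a verifiable identity rather than a geometric assertion about scaled fibers, at the cost of some linear-algebra bookkeeping. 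The one step you leave as an expectation does check out: with $w = \sqrt{1-y}$ and $s = \sqrt{y}/(1+w)$ (so $s^2 = \tfrac{1-w}{1+w}$), the coupled $(q_1,\dot b_n)$ block has real Gram determinant $\bigl[(1+w^2)(1+2s^2) - s^2(1-w)^2\bigr]^2 = \bigl[\tfrac{(1+w^2)(3-w) - (1-w)^3}{1+w}\bigr]^2 = 2^2 = 4$, matching the source determinant $4$, so that block contributes Jacobian factor $1$ and the decoupled directions (the $a_1$ direction and the $2n-6$ dimensions of $u_1$) supply exactly $\sqrt{1-y}^{\,2n-5}$; the real case runs identically with a $2$-dimensional coupled block of determinant $2$.
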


We defer the proof of this lemma for the moment and observe that \eqref{eq:propVolumeSpatial} allows us to simplify \eqref{eq:polpdfv2} as
\[
	\phi_3^\mathrm{Pol}(y) = \pi^2 y (1-y)^{n-3} \frac{\Vol V_2(\C^{n-1})}{\Vol V_2(\C^n)}= (n-2)(n-1)y(1-y)^{n-3}
\]
since $\frac{\Vol V_2(\C^{n-1})}{\Vol V_2(\C^n)} = \frac{(n-2)(n-1)}{\pi^2}$.

The argument for planar polygons is entirely parallel.
\end{proof}

\begin{proof}[Proof of Lemma~\ref{lem:volumeOfEdgeSpace}]
	Since the map $F_i: V_2(\C^n) \to [0,1]$ is given by $(a,b) \mapsto |a_i|^2 + |b_i|^2$, it is $U(2)$-invariant and so descends to a map $\widehat{F}_i: G_2(\C^n) \to [0,1]$. 
	
	As Jianwei observes \cite[Theorem~3.1]{Jianwei:2006wl}, the inverse image $\widehat{F}_i^{-1}(0)$ is a copy of $G_2 (\C^{n-1})$ and $\widehat{F}_i^{-1}(1)$ is a copy of $G_1 (\C^{n-1}) = \C P^{n-2}$. Moreover, following the reasoning from~\cite[Section~4.1]{2009arXiv0909.1967S}, the hypersurface $\widehat{F}_i^{-1}(y)$ for $y \in (0,1)$ is homeomorphic to $V_2(\C^n)/U(1)$, which can be viewed as an $SU(2)$-bundle over $\widehat{F}_i^{-1}(0) \simeq G_2(\C^{n-1})$ and as an $S^{2n-5}$-bundle over $\widehat{F}_i^{-1}(1) \simeq \C P^{n-2}$.
	
	Geometrically, the fiber over $\widehat{F}_i^{-1}(0)$ is scaled by $\sqrt{y}$, whereas the fiber over $\widehat{F}_i^{-1}(1)$ is scaled by $\sqrt{1-y}$. Therefore, as a hypersurface in $G_2(\C^n)$,
	\[
		\Vol \widehat{F}_i^{-1}(y) = \sqrt{y}^3 \sqrt{1-y}^{2n-5} \Vol\left(V_2(\C^{n-1})/U(1)\right) = \frac{1}{2\pi} \sqrt{y}^3 \sqrt{1-y}^{2n-5} \Vol V_2(\C^{n-2}).
	\]
	The hypersurface $F_i^{-1}(y) \subset V_2(\C^n)$ is just the Stiefel fiber over $\widehat{F}_i^{-1}(y) \subset G_2(\C^n)$; since the fibers of the Stiefel projection $V_2(\C^n) \to G_2(\C^n)$ are standard copies of $U(2)$, we can just multiply the right hand side of the above by $\Vol U(2) = 4\pi^3$ to get
	\[
		\Vol F_i^{-1}(y) = 2\pi^2 \sqrt{y}^3 \sqrt{1-y}^{2n-5} \Vol V_2(\C^{n-1}),
	\]
	as desired. 
	
	Re-using notation, let $F_i: V_2(\R^n) \to [0,1]$ be given by $(a,b) \mapsto |a_i|^2 + |b_i|^2$, which descends to $\widehat{F}_i: G_2(\R^n) \to [0,1]$. 
	
	Then \eqref{eq:propVolumePlanar} says that, as a hypersurface in $V_2(\R^n)$,
	\[
		\Vol F_i^{-1}(y)  = 2\pi \sqrt{y} \sqrt{1-y}^{n-3} \Vol\, V_2(\R^{n-1}).
	\]
	Essentially the same argument as in the complex case works: for $y \in (0,1)$ the inverse image $\widehat{F}_i^{-1}(y) \subset G_2(\R^n)$ is homeomorphic to $V_2(\R^{n-1})/O(1)$ and geometrically is an $SO(2)$-bundle over $G_2(\R^{n-1})$ with fibers scaled by $\sqrt{y}$, as well as an $S^{n-3}$-bundle over $G_1(\R^{n-1}) = \R P^{n-2}$ with fibers scaled by $\sqrt{1-y}$. Therefore, as a hypersurface in $G_2(\R^n)$ we have
	\[
		\Vol \widehat{F}_i^{-1}(y) = \sqrt{y} \sqrt{1-y}^{n-3} \Vol\left(V_2(\R^{n-1})/O(1)\right) = \frac{1}{2} \sqrt{y}\sqrt{1-y}^{n-3} \Vol V_2(\R^{n-1}).
	\]
	Since the fibers of the Stiefel projection $V_2(\R^n) \to G_2(\R^n)$ are copies of $O(2)$, we just multiply the above by $\Vol O(2) = 4\pi$ to get the volume of $F_i^{-1}(y)$ as a hypersurface in $V_2(\R^n)$, yielding the expression in \eqref{eq:propVolumePlanar}.	% 
\end{proof}

We can now compute the moments of the edgelength distribution and so recover the results of Section~\ref{sec:moments}: the $p$th moment of edgelength for spatial arms is computed in this style as
\[
	\int_0^2 y^p \phi_3^\mathrm{Arm}(y)\, dy = \int_0^2 \frac{(2n-2)(2n-1)}{2^{n-1}} y^{p+1} (2-y)^{2n-3} \, dy = 2^p\frac{\Beta(p,2n)}{\Beta(p,2)},
\]
agreeing with the results of Proposition~\ref{prop:moments}. This probability distribution function is easy to confirm by experiment, as shown in Figure~\ref{fig:pdfcheck}.

\begin{figure}[ht]
\begin{overpic}[width=2.5in]{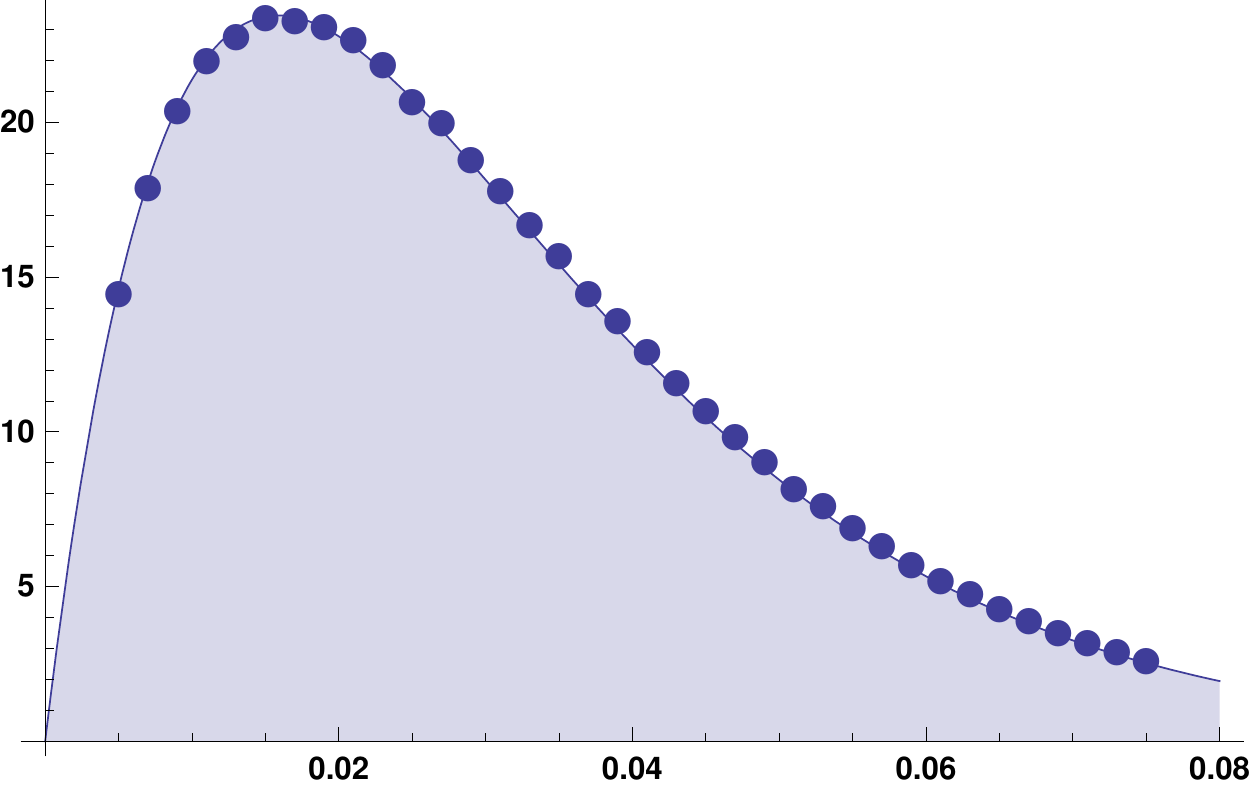}
\put(-10,64){$P$}
\put(101,2){$y$}
\end{overpic} 
\caption{This plot shows squared edgelength data from an ensemble of $10^5$ 64-edge arms generated by Erica Uehara. The smooth curve shows the function $\phi_3^\mathrm{Arm}(y)$ for $n=64$, while the data points are normalized bin counts from a histogram of the squared edge lengths of the polygons in our sample. An interesting observation is that this data also fits very well to the much simpler function $4096 y e^{-64 y}$, which is only $\simeq 0.03343$ from $\phi_3^\mathrm{Arm}(y)$ in $L^2$ distance over the interval $y \in [0,2]$. This fitting function is quite simple and hence should be useful for actual theoretical
analysis.
\label{fig:pdfcheck}
}
\end{figure}

More interestingly, we can make some precise statements about how close to equilateral uniformly sampled arms and polygons are. For example, the following proposition implies that all edgelengths are asymptotically of the same order as the expected value $\nicefrac{2}{n}$.

\begin{proposition}\label{prop:edgelengthBound}
	The probability that a randomly sampled $n$-edge spatial arm or polygon has $i$th edgelength greater than $c\frac{\log n}{n}$ for $c > 0$ satisfies
	\[
		P\left[|e_i| \geq c \frac{\log n}{n}\right] < \frac{1+c \log n}{n^c}
	\]
	provided that $n$ is large enough that $c > \frac{4}{\log n}$.
	
	Similarly, the probability that a randomly sampled $n$-edge planar arm or polygon has $i$th edgelength greater than $c \frac{\log n}{n}$ for $c > 0$ satisfies
	\[
		P\left[|e_i| \geq c \frac{\log n}{n}\right] < \frac{1}{n^{\nicefrac{c}{2}}}
	\]
	again provided that $c > \frac{4}{\log n}$.
\end{proposition}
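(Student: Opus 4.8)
The plan is to integrate the explicit edgelength densities of \prop{pdfs} directly, since for these particular beta distributions the tail integral is elementary. Writing $t := c\,\nicefrac{\log n}{n}$ for the threshold, I would first evaluate the four tail probabilities in closed form. The substitution $u = 2-y$ (arms, domain $[0,2]$) or $u = 1-y$ (polygons, domain $[0,1]$) turns each integrand into a polynomial in $u$, and integrating gives
\begin{align*}
P[|e_i|\ge t] &= \left(1-\tfrac{t}{2}\right)^{2n-2}\bigl(1+(n-1)t\bigr) &&\text{(spatial arm)},\\
P[|e_i|\ge t] &= (1-t)^{n-2}\bigl(1+(n-2)t\bigr) &&\text{(spatial polygon)},\\
P[|e_i|\ge t] &= \left(1-\tfrac{t}{2}\right)^{n-1} &&\text{(planar arm)},\\
P[|e_i|\ge t] &= (1-t)^{\nicefrac{n}{2}-1} &&\text{(planar polygon)}.
\end{align*}
The two spatial answers are structurally identical (a power of $(1-s)$ times an affine factor) and asymptotically equal, which is exactly why the proposition can assign them a single bound.

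Next, for the spatial bound I would split each product into its affine factor and its $(1-s)^m$ factor. The affine factors satisfy $1+(n-1)t < 1+c\log n$ and $1+(n-2)t < 1+c\log n$, since $(n-1)t,(n-2)t < nt = c\log n$; hence it suffices to prove $\left(1-\tfrac{t}{2}\right)^{2n-2}\le n^{-c}$ and $(1-t)^{n-2}\le n^{-c}$, after which multiplying by the affine factor yields $\frac{1+c\log n}{n^{c}}$ with the inequality strict because the affine factor is. The planar cases reduce directly to $\left(1-\tfrac{t}{2}\right)^{n-1}\le n^{-\nicefrac{c}{2}}$ and $(1-t)^{\nicefrac{n}{2}-1}\le n^{-\nicefrac{c}{2}}$.

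The main obstacle, and the source of the hypothesis $c>\nicefrac{4}{\log n}$, is that the naive estimate $1-x\le e^{-x}$ is too weak. For the spatial arm it gives $\left(1-\tfrac{t}{2}\right)^{2n-2}\le e^{-(n-1)t}=n^{-c}\,n^{c/n}$, overshooting the target $n^{-c}$ by the factor $n^{c/n}>1$. To recover this lost factor I would retain the quadratic term, using the elementary inequality $\log(1-x)\le -x-\tfrac{x^{2}}{2}$ (strict for $x\in(0,1)$, which also delivers the strict planar bounds). The surplus $+\tfrac{c\log n}{n}$ coming from the exponent $(n-1)t = c\log n - \tfrac{c\log n}{n}$ is then dominated by the quadratic contribution $\approx \tfrac{(n-1)t^{2}}{4}$ precisely when $c\log n$ exceeds a value slightly larger than $4$ (bookkeeping gives the crossover $c\log n \ge \nicefrac{4n}{n-1}$), so the stated threshold $c>\nicefrac{4}{\log n}$ is exactly what forces the quadratic correction to win, with any residual discrepancy absorbed by the further negative terms of $\log(1-x)$. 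I expect this second-order threshold estimate, rather than the integration, to be the only delicate step.
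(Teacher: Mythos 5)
Your computation of the four tail probabilities is correct and coincides with what the paper does (it evaluates the spatial--arm and planar--polygon integrals explicitly and declares the other two parallel), as does your reduction to bounding the affine factor by $1+c\log n$ and the pure power factors $\left(1-\tfrac{t}{2}\right)^{2n-2}$, $(1-t)^{n-2}$, $\left(1-\tfrac{t}{2}\right)^{n-1}$, $(1-t)^{\nicefrac{n}{2}-1}$ separately. Where you genuinely diverge is the key estimate on the power factor. The paper substitutes the threshold value $c=\nicefrac{4}{\log n}$, observes that $\left(1-\tfrac{2}{n}\right)^{2n-2}$ increases to $e^{-4}$, and then writes $e^{-4}=n^{-\nicefrac{4}{\log n}}<n^{-c}$. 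That final inequality is reversed under the hypothesis $c>\nicefrac{4}{\log n}$: one has $n^{-c}<n^{-\nicefrac{4}{\log n}}=e^{-4}$, so the paper's chain only bounds the power factor by $e^{-4}$, which \emph{exceeds} the target $n^{-c}$. Indeed, as your own analysis shows, when $c\log n$ is close to $4$ the power factor is only barely below $n^{-c}$, so no comparison with a constant such as $e^{-4}$ can close the argument. Your second-order estimate $\log(1-x)\le -x-\tfrac{x^2}{2}$ is therefore not merely an alternative route but the one that actually works, and your crossover computation $c\log n\ge\nicefrac{4n}{(n-1)}$ is exactly right.

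The one place you must tighten the write-up is the closing clause ``any residual discrepancy absorbed by the further negative terms of $\log(1-x)$.'' Since the hypothesis only gives $c\log n>4$ while the quadratic term alone needs $c\log n\ge\nicefrac{4n}{(n-1)}$, the absorption is where the content of the hypothesis lives and should be exhibited. For the spatial arm, after dividing the required inequality $(2n-2)\sum_{k\ge 2}\tfrac{(t/2)^k}{k}\ge t$ by $t$, the quadratic term gives $(n-1)\tfrac{t}{4}>1-\tfrac{1}{n}$ and the cubic term gives $(n-1)\tfrac{t^2}{12}>\tfrac{4(n-1)}{3n^2}$, which covers the deficit $\tfrac{1}{n}$ precisely when $n\ge 4$; the remaining small $n$ (where $t>\nicefrac{4}{n}$ pushes the threshold near or past the top of the edgelength range) are vacuous or checked directly, and the polygon case $(1-t)^{n-2}$ needs the analogous computation, which requires $n\ge 8$ from two correction terms plus a direct check of $n=5,6,7$. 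None of this is difficult, but without it the delicate step you correctly identified remains an assertion rather than a proof.
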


\begin{remark}
	Since $e^4 \approx 54.6$, we have
	\[
		\frac{4}{\log n} < 1 \text{ for } n \geq 55
	\]
	and so Proposition~\ref{prop:edgelengthBound} holds with $c=1$ for $n \geq 55$.
\end{remark}

% \begin{proposition}\label{prop:edgelengthBound}
% 	The probability that a randomly sampled $n$-edge spatial arm or polygon has $i$th edgelength greater than $c\frac{\log n}{n}$ for $c > 0$ satisfies
% 	\[
% 		P\left[|e_i| \geq c \frac{\log n}{n} \right] < \frac{1+c \log n}{n^{c- \alpha}}
% 	\]
% 	for any $\alpha$ such that $\frac{n-2}{\log n} \log\left(1 - \frac{c \log n}{n} \right) + c<\alpha<c$. 
% 	
% 	For $n$-edge planar arms and polygons the corresponding bound is
% 	\[
% 		P\left[|e_i| \geq c \frac{\log n}{n}\right] < \frac{1}{n^{\nicefrac{(c-2\alpha)}{2}}}
% 	\]
% 	for any $\alpha$ such that $\frac{\nicefrac{n}{2}-1}{\log n} \log\left(1 - \frac{c\log n}{n}\right) + \frac{c}{2} < \alpha < \frac{c}{2}$. 
% 	
% 	For example, in all cases if $c>\nicefrac{1}{10}$, then the above bounds hold with $\alpha = \nicefrac{1}{10}$ for any $n \geq 10$.	% 
% 		% for any $0< \alpha < c$ provided $n$ is sufficiently large. In particular, if $c > \nicefrac{1}{10}$, then the above bound holds with $\alpha = \nicefrac{1}{10}$ for any $n \geq 4$.
% \end{proposition}

Both $\frac{1+c \log n}{n^c}$ and $\frac{1}{n^{\nicefrac{c}{2}}}$ go to zero as $n \to \infty$; therefore the space of $n$-edge arms and polygons with $i$th edgelength greater than $c \frac{\log n}{n}$ has measure approaching zero for large $n$.

\begin{proof}[Proof of Proposition~\ref{prop:edgelengthBound}]
	We prove the proposition for spatial arms and planar polygons; the other two cases are completely parallel.
	
	For spatial arms, the probability that the $i$th edgelength is greater than $c \frac{\log n}{n}$ is given by
	\begin{equation}\label{eq:armLargeEdges}
		\int_{c\frac{\log n}{n}}^2 \phi_\mathrm{Arm}^3(y) \, dy = \left(1-c\frac{\log n}{2n}\right)^{2n-2} \left(1+ c \log n - c \frac{\log n}{n}\right).
	\end{equation}
	The second term on the right hand side is less than $1 + c \log n$, taking care of the numerator in the bound. As for the first term, if $c>\frac{4}{\log n}$, then
	\[
		\left(1 - c\frac{\log n}{2n}\right)^{2n-2} < \left(1 - \frac{4}{2n}\right)^{2n-2}.
	\] 
	Since this expression is monotone increasing in $n$ and limits to $e^{-4}$ as $n \to \infty$, it must be less than
	\[
		e^{-4} = \left(n^{\nicefrac{1}{\log n}}\right)^{-4} = n^{-\nicefrac{4}{\log n}} < n^{-c},
	\]
	again using the fact that $c > \frac{4}{\log n}$. Therefore, $\left(1-c\frac{\log n}{2n}\right)^{2n-2} < \frac{1}{n^c}$ and the result follows.
	
	Notice that the expression on the right hand side of \eqref{eq:armLargeEdges} behaves asymptotically like the bound $\frac{1+c \log n}{n^c}$, so this bound is optimal whenever it holds. 
	
	For planar polygons, the probability that the $i$th edgelength is greater than $c \frac{\log n}{n}$ is given by
	\[
		\int_{c\frac{\log n}{n}}^1 \phi_\mathrm{Pol}^2(y) \, dy = \left(1 - c \frac{\log n}{n}\right)^{\nicefrac{n}{2}-1}.
	\]
	Then $c > \frac{4}{\log n}$ ensures that
	\[
		\left(1 - c \frac{\log n}{n}\right)^{\nicefrac{n}{2}-1} < \left(1 - \frac{4}{n}\right)^{\nicefrac{n}{2}-1},
	\]
	which in turn is monotone increasing in $n$ and so is less than its limiting value of 
	\[
		e^{-2} = \left(n^{\nicefrac{1}{\log n}}\right)^{-2} = n^{-\nicefrac{2}{\log n}} < n^{-\nicefrac{c}{2}},
	\]
	as desired. Again, the actual value and the bound are asymptotically the same, so this bound is optimal.
\end{proof}

Proposition~\ref{prop:edgelengthBound} implies that, for large $n$, we should expect all edgelengths of $n$-edge arms and polygons to be of the same order as the expected value $\nicefrac{2}{n}$. However, the space of arms or polygons with $i$th edgelength greater than any fixed multiple of the expected value has positive measure. For example, the proportion of space $n$-gons with $i$th edgelength greater than $c \frac{2}{n}$ for $c > 0$ is
\[
	P\left[|e_i| \geq c \frac{2}{n} \right] = \int_{c \frac{2}{n}}^1 \phi_\mathrm{Pol}^3(y) \, dy = \left(1-  \frac{2c}{n}\right)^{n-2} \left(1 + 2c -  \frac{4c}{n}\right)
\]
(provided, of course, that $c \frac{2}{n} < 1$). This quantity is monotone decreasing in $n$ and hence is bounded below by its limiting value $\frac{1+2c}{e^{2c}}$. Therefore, 
\[
	P\left[|e_i| \geq c \frac{2}{n} \right] > \frac{1+2c}{e^{2c}}
\]
for all $n$. Similar results hold for the other arm and polygon spaces.

\clearpage

\section{Asymptotic Comparison of Polygons and Arms}
\label{sec:asymptotics}
It is a natural intuition that ``sufficiently short'' sections of a closed polygon should behave like corresponding sections of polygonal arms. Since the closure constraint is global, its local effect should vanish as $n \rightarrow \infty$. Expressing distance along the curve in the fraction $\delta = k/n$, it is easy to see that we have
\begin{equation*}
(1-\delta) \frac{n}{n+1} < \frac{E(\chord(\delta),\Pol_i(n))}{E(\chord(\delta),\Arm_i(n))} < (1-\delta) \frac{n}{n-1}.
\end{equation*}
We can see from this formula that the fractional distance along the curve (in vertices) is what matters: for any fixed $k$, $\delta \rightarrow 0$ as $n\rightarrow \infty$ and the expected values of $\chord$ converge rather quickly for arms and polygons. An even simpler formula holds for equilateral polygons, as 
\begin{equation*}
\frac{E(\chord(\delta),\ePol_i(n))}{E(\chord(\delta),\eArm_i(n))} = (1-\delta) \frac{n}{n-1}.
\end{equation*}

The radius of gyration formulae in Propositions~\ref{prop:egyradius} and~\ref{prop:eqgyradius} are similarly interesting: we can see that, asymptotically, the (squared) radius of gyration of an arm is expected to be exactly twice as large as the corresponding (squared) radius for a closed polygon.

\section{Sampling in Arm Space and Polygon Space}
\label{sec:sampling}

Sampling closed polygons is traditionally fairly difficult. Orlandini et al.\ \cite{Orlandini:2007kn} provide an overview of the standard methods, many of which depend on establishing Markov chains which are ergodic on equilateral closed polygon space and then iterating the chain until the resulting distribution on polygon space is close to uniform. Grosberg and Moore~\cite{Moore:2005fh} discuss some potential difficulties with these iterative methods, and give a method for explicitly sampling random equilateral closed polygons for small numbers of edges by computing the conditional probability distribution of the $n+1$-st edge based on the choice of the first $n$ edges  (see~\cite{Diao:2011ie},\cite{2011JPhA...44R9501D}, and \cite{Diao:wt} for conditional probability methods applied to the even more difficult problem of sampling equilateral closed polygons confined to a sphere, and  \cite{Varela:2009cd} for an alternate approach to generating ensembles of equilateral closed polygons). These conditional probability algorithms are somewhat challenging to implement and fairly slow, requiring high precision arithmetic and scaling with $O(n^3)$. 

If one is willing to shift one's focus from equilateral polygons to our polygon spaces where the edgelengths obey beta distributions, there is a substantial computational reward: direct sampling in our spaces of closed length 2 (but \emph{not} equilateral) polygons is fast and easy, taking only a few lines of code and scaling with $O(n)$. In this section, we describe our sampling algorithm and provide some numerical results which confirm the theoretical results above.

Sampling a polygon in arm space is equivalent to choosing points uniformly on the sphere $S^{4n-1}$ or $S^{2n-1}$. There is a well-established literature for this problem, but we mention that it suffices to generate a vector of independent standard Gaussians and then normalize it. Sampling in our closed polygon spaces requires us to sample with respect to Haar measure on the Stiefel manifold $V_{2}(\C^n)$ or $V_2(\R^n)$. Chikuse~\cite{Chikuse:2003we} gives several algorithms for this. The simplest is 
\begin{proposition}
If $u$ and $v$ are generated uniformly on $S^n$, the Gram-Schmidt orthonormalization procedure applied to $(u,v)$ yields an orthonormal frame $(u',v')$ which is uniformly distributed on the Stiefel manifold $V_2(\R^n)$, or, if Gram-Schmidt is performed with the Hermitian inner product for vectors on the complex unit sphere, a frame uniformly distributed on $V_2(\C^n)$.
\label{prop:gs_sample}
\end{proposition}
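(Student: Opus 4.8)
The plan is to recognize this as an instance of the uniqueness of invariant measures on a homogeneous space: Haar measure on $V_2(\R^n)$ is the unique Borel probability measure invariant under the transitive action of the compact group $O(n)$, so it suffices to show that Gram--Schmidt transports the product of uniform sphere measures to an $O(n)$-invariant probability measure on $V_2(\R^n)$. I would prove the real case in full and note that the complex case is word-for-word identical with $U(n)$ in place of $O(n)$ and the Hermitian inner product in place of the Euclidean one.

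First I would dispose of the locus where the procedure degenerates: the set $\{(u,v) : v \in \R u\}$ on which $u,v$ are linearly dependent has measure zero in the product of spheres, so Gram--Schmidt is defined almost everywhere and its pushforward measure is well defined. Writing $\mathrm{GS}(u,v) = (u',v')$ with $u' = u$ and $v' = (v - \langle v,u\rangle u)/\lVert v - \langle v,u\rangle u\rVert$, the crucial observation is equivariance: for $g \in O(n)$ one has $\langle gv, gu\rangle = \langle v,u\rangle$ and $\lVert gw\rVert = \lVert w\rVert$, whence $\mathrm{GS}(gu,gv) = (gu',gv')$. Thus $\mathrm{GS}$ intertwines the diagonal $O(n)$-action on the product of spheres with the standard $O(n)$-action on $V_2(\R^n)$. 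Since the product of two copies of the $O(n)$-invariant uniform measure is invariant under the diagonal action, its pushforward under the equivariant map $\mathrm{GS}$ is an $O(n)$-invariant probability measure on $V_2(\R^n)$. As $O(n)$ acts transitively on orthonormal $2$-frames, $V_2(\R^n) \cong O(n)/O(n-2)$ carries exactly one invariant probability measure, namely the normalized Haar (equivalently Riemannian volume) measure; the pushforward must therefore equal it, which is the assertion. (Alternatively one could argue conditionally, observing that $u' = u$ is uniform on the sphere and that, given $u'$, the vector $v'$ is uniform on the unit sphere of the orthogonal complement $(u')^{\perp}$ by rotational symmetry, but the invariance argument is cleaner.)

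The argument is essentially routine, so the ``main obstacle'' is a matter of bookkeeping rather than genuine difficulty. The two points that require care are: confirming that the measure named ``Haar'' or ``uniform'' on the Stiefel manifold in \defn{symmeasure} is precisely this unique invariant measure -- which holds because the Riemannian volume is manifestly invariant under the isometric $O(n)$- (resp.\ $U(n)$-) action, and invariance plus transitivity pins the measure down uniquely -- and checking that equivariance survives the normalization step in the complex case, where one must use the sesquilinearity of the Hermitian product so that the projection $v - \langle v,u\rangle u$ transforms as $g(v - \langle v,u\rangle u)$ under $g \in U(n)$.
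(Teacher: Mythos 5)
Your argument is correct and complete. Note, however, that the paper does not actually prove this proposition: it is quoted from Chikuse's book on statistics on special manifolds and used as a black box, so there is no internal proof to compare against. Your route --- Gram--Schmidt is $O(n)$- (resp.\ $U(n)$-) equivariant, the product of uniform sphere measures is invariant under the diagonal action, an equivariant map pushes an invariant measure to an invariant measure, and a homogeneous space of a compact group carries a unique invariant Borel probability measure, which coincides with the normalized Riemannian volume used in the paper's Definition of the symmetric measure --- is the standard proof and fills in exactly what the citation leaves implicit. You also correctly dispose of the two points that could trip this up: the degenerate locus where $u$ and $v$ are dependent is a null set (and is itself group-invariant, so the a.e.-defined pushforward is still invariant), and sesquilinearity makes the Hermitian projection $v - \langle v,u\rangle u$ transform equivariantly under $U(n)$. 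The only cosmetic quibble is that the spheres should be $S^{n-1}\subset\R^n$ and $S^{2n-1}\subset\C^n$ (the statement's ``$S^n$'' is a typo in the paper), and one should say explicitly that $u$ and $v$ are drawn \emph{independently}, which your product-measure step requires.
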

For the convenience of the reader, here is explicit pseudocode for this method. We assume the existence of a function $\proc{Gaussian}$ which gives a random value sampled from a standard Gaussian distribution, such as the GNU Scientific Library function \verb|gsl_ran_ugaussian()|. The entries in the arrays $A$, $B$, $\id{FrameA}$, and $\id{FrameB}$ are assumed to be complex, the $\proc{Conj}$ function is assumed to give the complex conjugate $x - \I y$ of the complex number $x + \I y$, and the $\proc{Re}$ and $\proc{Im}$ functions are assumed to give the real and imaginary parts of a complex number. To generate random planar polygons, delete the expression $I*\proc{Gaussian()}$ both places it occurs in the first loop of $\proc{Random-Space-Polygon}$.

\begin{codebox}
\Procname{$\proc{CplxDot}(V,W)$}
\zi \Comment Compute the Hermitian dot product of two complex $n$-vectors. 
\zi \For $\id{ind} = 1$ \To $n$ 
\zi     \Do
       $\id{Dot} += V[\id{ind}] * \proc{Conj}(W[\id{ind}])$
 \End
\zi  \Return \id{Dot}
\End
\end{codebox}

\begin{codebox}
\Procname{$\proc{Normalize}(V)$}
\zi \Comment Normalize a complex $n$-vector to unit length.
\zi \For $\id{ind} = 1$ \To $n$ 
\zi     \Do
       $\id{UnitV}[\id{ind}] = V[\id{ind}]/\proc{Sqrt}(\proc{CplxDot}(V,V))$ 
    \End
\zi  \Return \id{UnitV}  
\End
\end{codebox}

\begin{codebox}
\Procname{$\proc{HopfMap}(a,b)$}
\zi \Comment Compute the vector in $\R^3$ given by the 
\zi \Comment Hopf map applied to the quaternion $a + b \J$.
\zi \Return ( a * \proc{Conj}(a) - b * \proc{Conj}(b), 2 \proc{Re}(a * \proc{Conj}(b)), 2 \proc{Im}(a * \proc{Conj}(b)) )
\End
\end{codebox}

\begin{codebox}
\Procname{$\proc{Random-Space-Polygon}(n)$}
\zi \Comment Produce edge vectors $\id{Edge}[\id{ind}]$ for a 
\zi \Comment random closed space polygon of length 2.

\zi \Comment 1. Generate a frame with Gaussian coordinates.

\zi \For $\id{ind} = 1$ \To $n$
\zi   \Do
        $A[\id{ind}] = \proc{Gaussian}() + I * \proc{Gaussian}()$
\zi     $B[\id{ind}] = \proc{Gaussian}() + I * \proc{Gaussian}()$
\zi   \End

\zi \Comment 2. Perform Gram-Schmidt to get $\id{FrameA}$ and $\id{FrameB}$.
\zi \For $\id{ind} = 1$ \To $n$
\zi   \Do
        $\id{FrameA}[\id{ind}] = A[\id{ind}]$
\zi     $\id{FrameB}[\id{ind}] = B[\id{ind}] - (\proc{CplxDot}(B,A)/\proc{CplxDot}(A,A)) A[\id{ind}]$
      \End
\zi   $\id{FrameA} = \proc{Normalize}(\id{FrameA})$, $\id{FrameB} = \proc{Normalize}(\id{FrameB})$
\zi

\zi \Comment 3. Apply the Hopf map coordinate-by-coordinate.

\zi \For $\id{ind} = 1$ \To $n$
\zi   \Do
	    $\id{Edge}[\id{ind}] = \proc{HopfMap}(\id{FrameA}[\id{ind}],\id{FrameB}[\id{ind}])$.
\zi      \End

\zi  \Return \id{Edge}
\End

\end{codebox}

Since this is a linear-time algorithm, this is a very fast way to sample random polygons uniformly with respect to our measure. A reference implementation in C of this method is provided as part of Cantarella's \texttt{plCurve} library, which is open-source and freely available. Figure~\ref{fig:examples} shows some space polygons generated by this library. 
\begin{figure}
\begin{tabular}{ccc}
\includegraphics[height=2in]{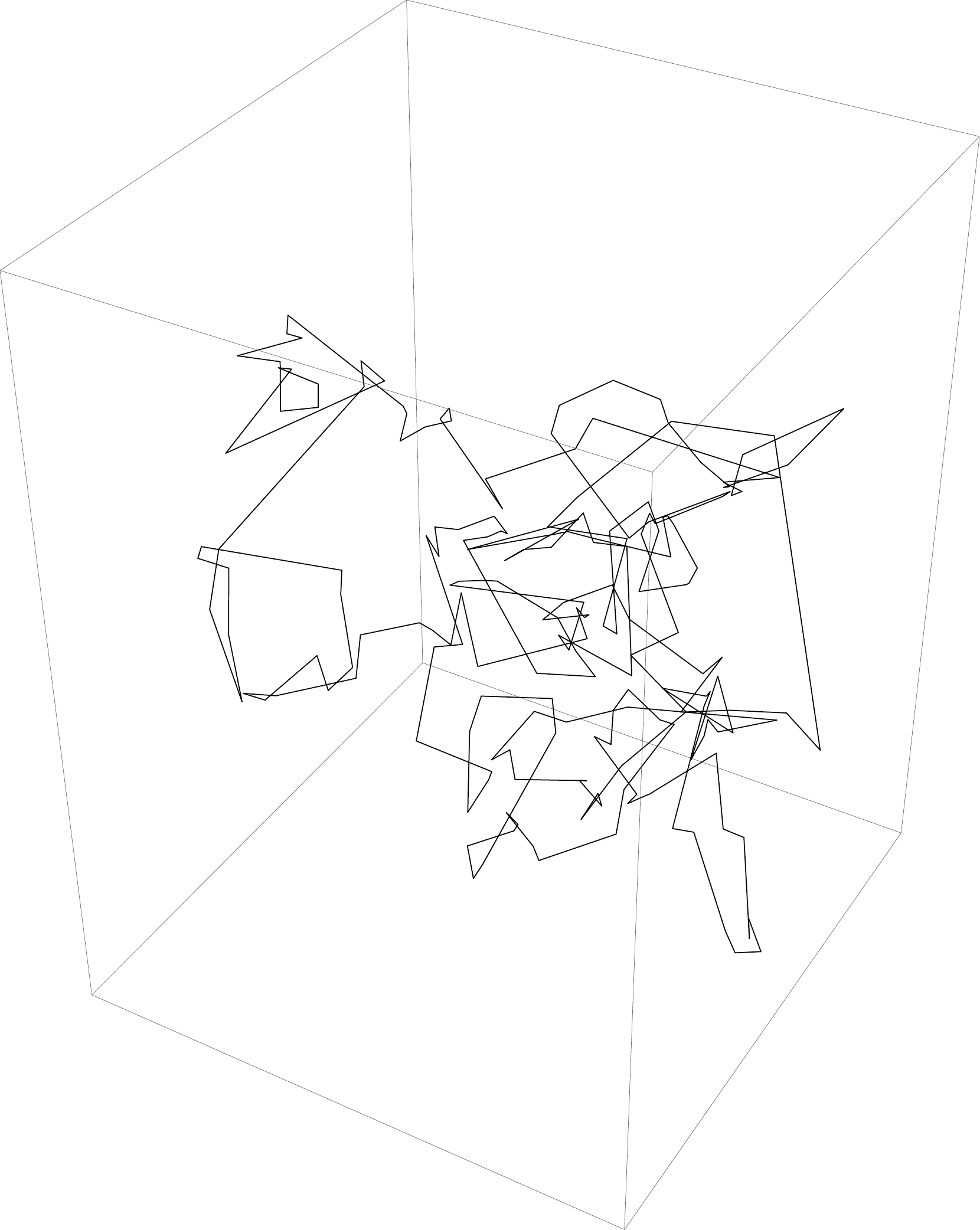} & 
\includegraphics[height=2in]{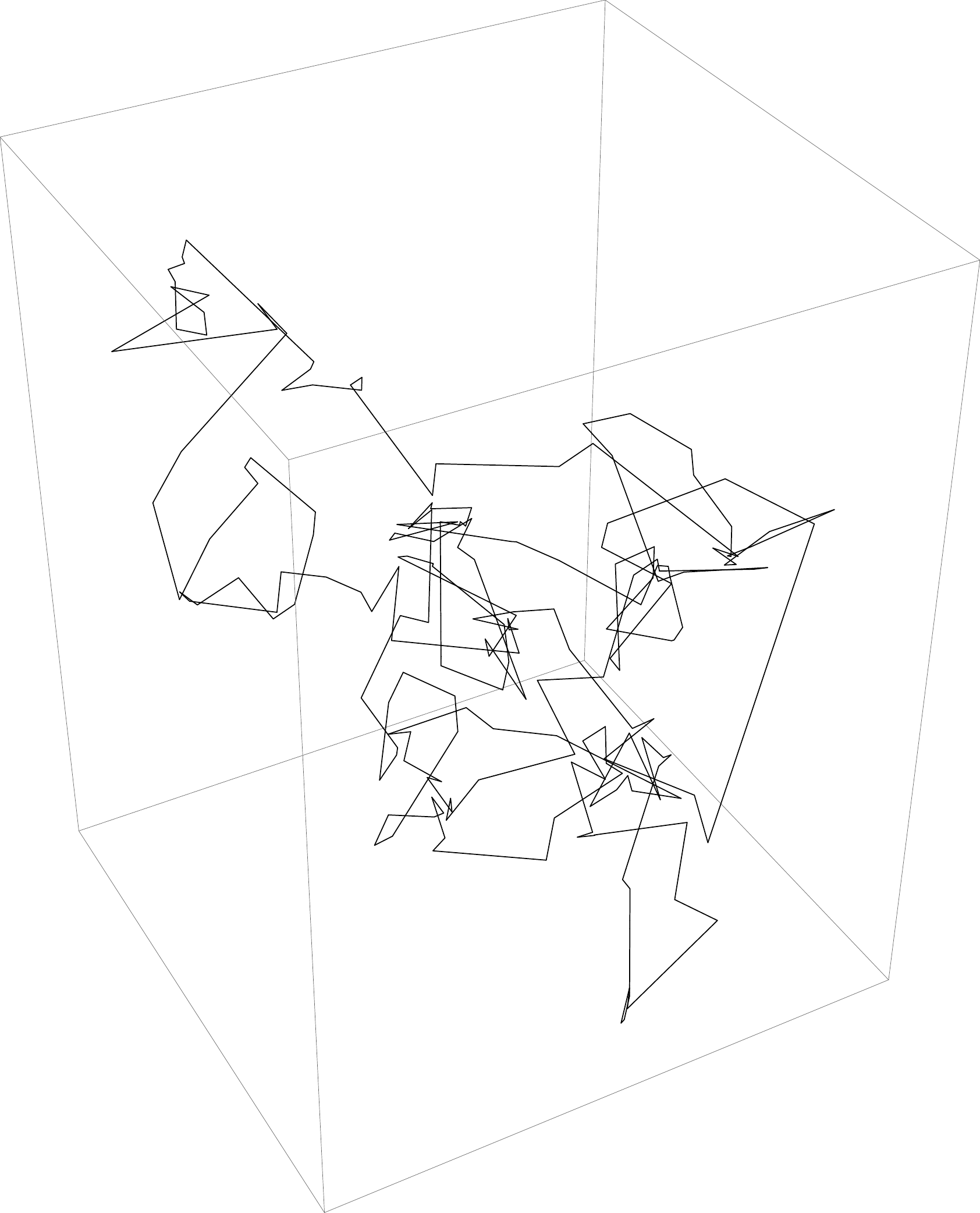} & 
\includegraphics[height=2in]{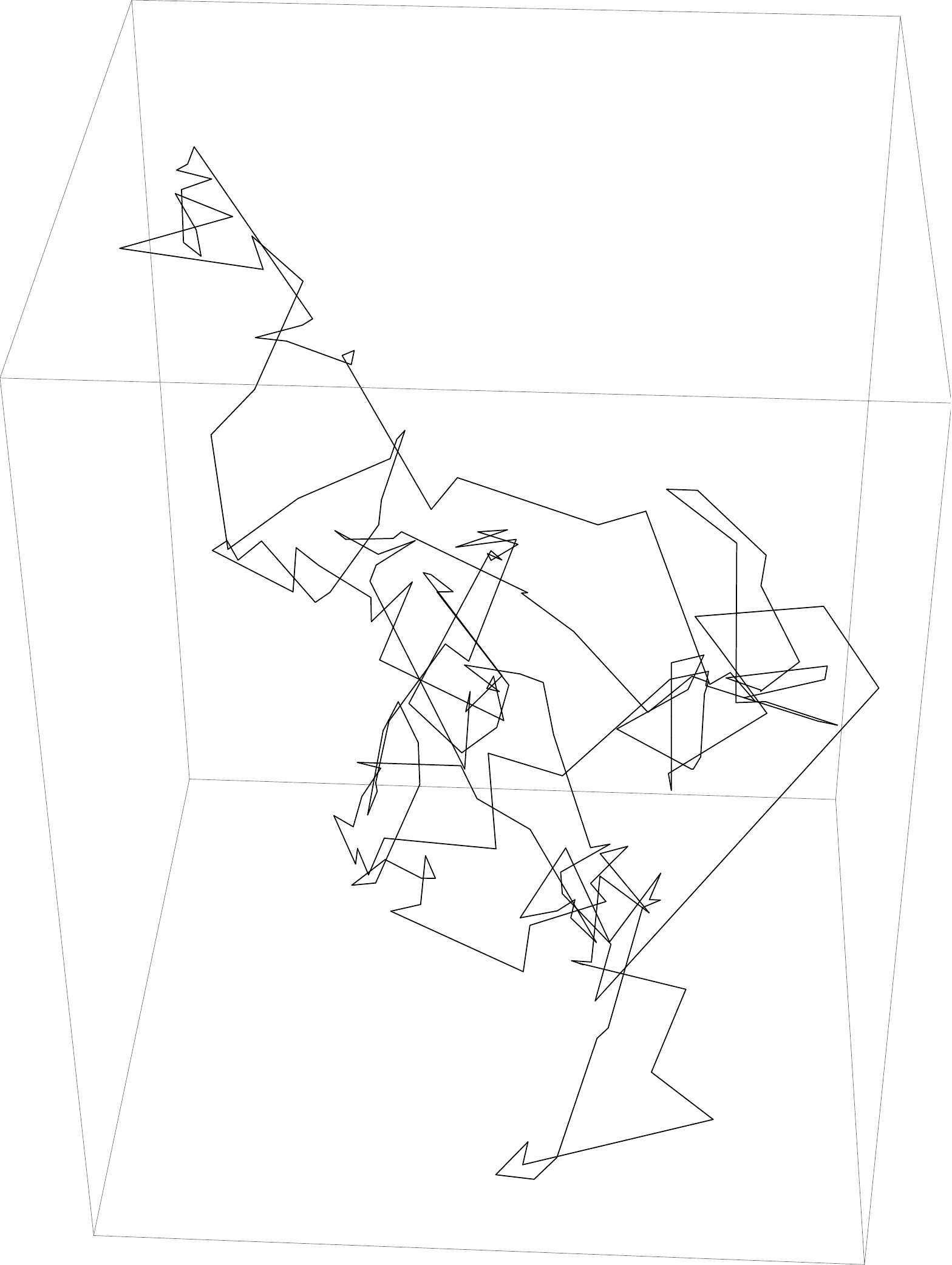} \\
200 & 200 & 200 \\
\includegraphics[height=2in]{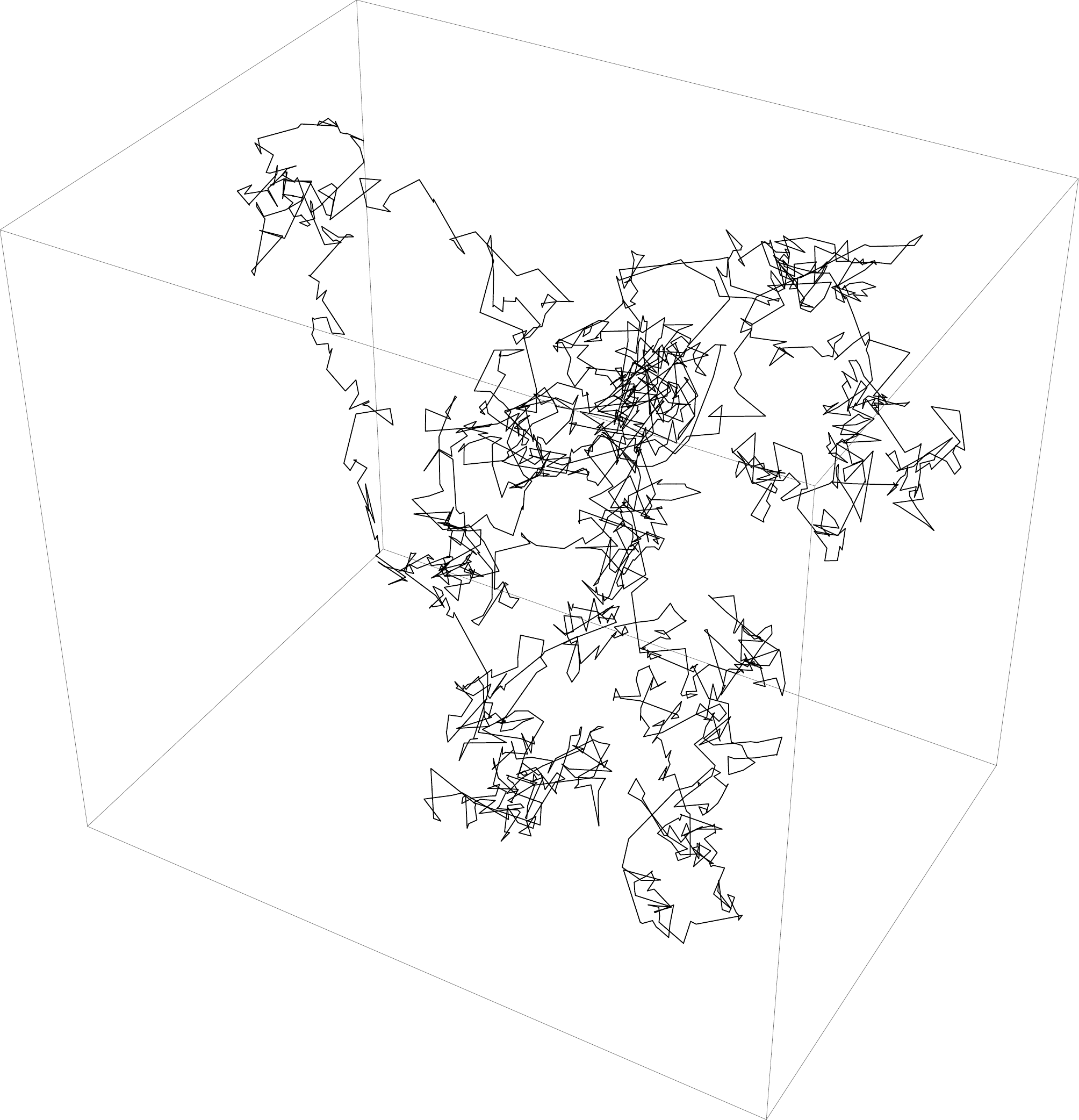} & 
\includegraphics[height=2in]{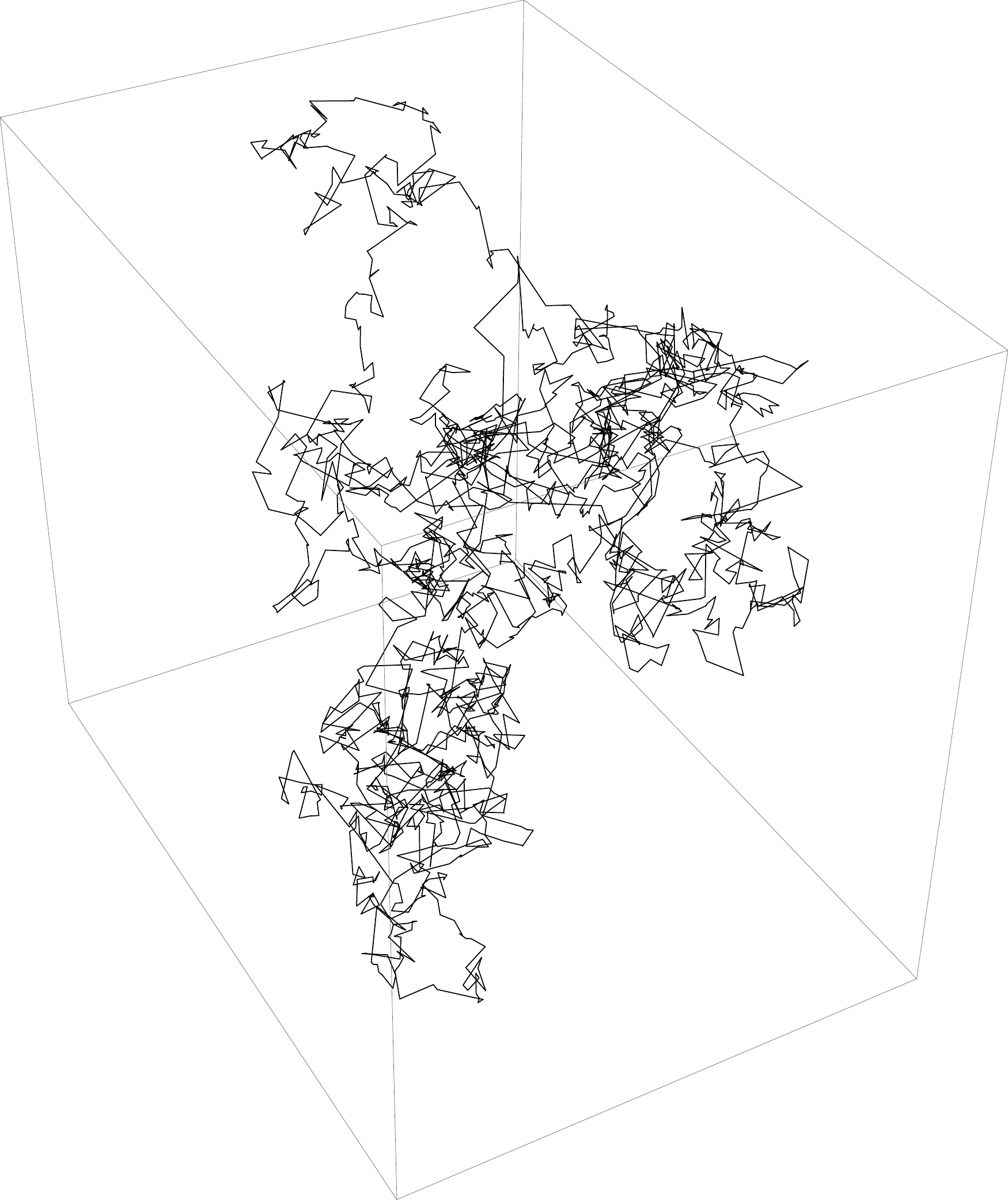} & 
\includegraphics[height=2in]{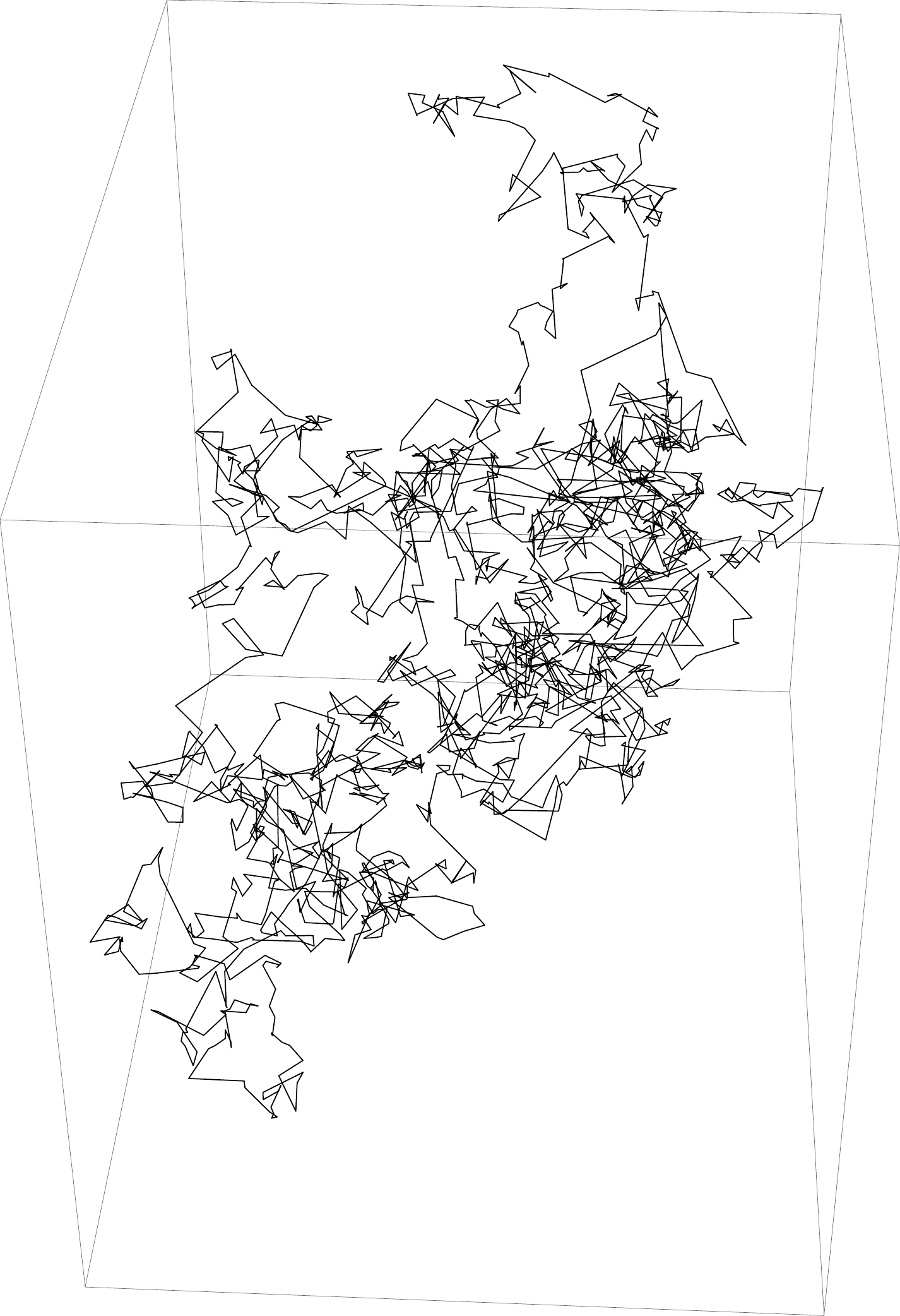} \\
2,000 & 2,000 & 2,000 \\
\includegraphics[height=2in]{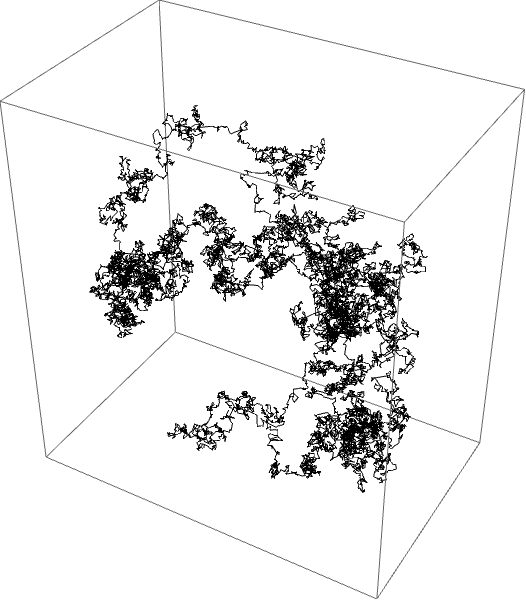} & 
\includegraphics[height=2in]{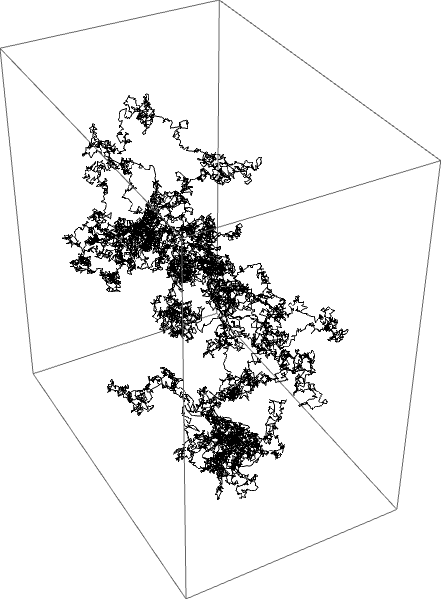} & 
\includegraphics[height=2in]{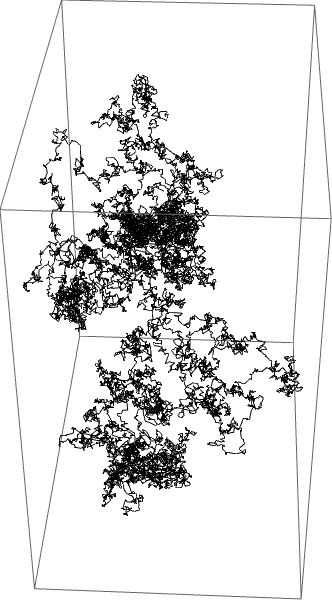} \\
20,000 & 20,000 & 20,000 
\end{tabular}
\caption{This figure shows three views of three space polygons directly sampled from our distribution on $\Pol_3(n)$ by sampling frames in the Stiefel manifold. The polygons have 200, 2,000, and 20,000 edges, respectively.
\label{fig:examples}}
\end{figure}
To provide a check on our theory and our code, we can now test the predictions of Propositions~\ref{prop:expected chords} and~\ref{prop:egyradius} against the data generated by the library. Figure~\ref{fig:msc} shows the comparison between theory and experiment for mean squared chordlength and Figure~\ref{fig:gyradius} shows the corresponding comparison for radius of gyration. As expected, we see that it is easy to verify our theorems to several digits numerically.
\begin{figure}
\begin{tabular}{c@{\hspace{0.5in}}c}
\begin{overpic}[width=2.5in]{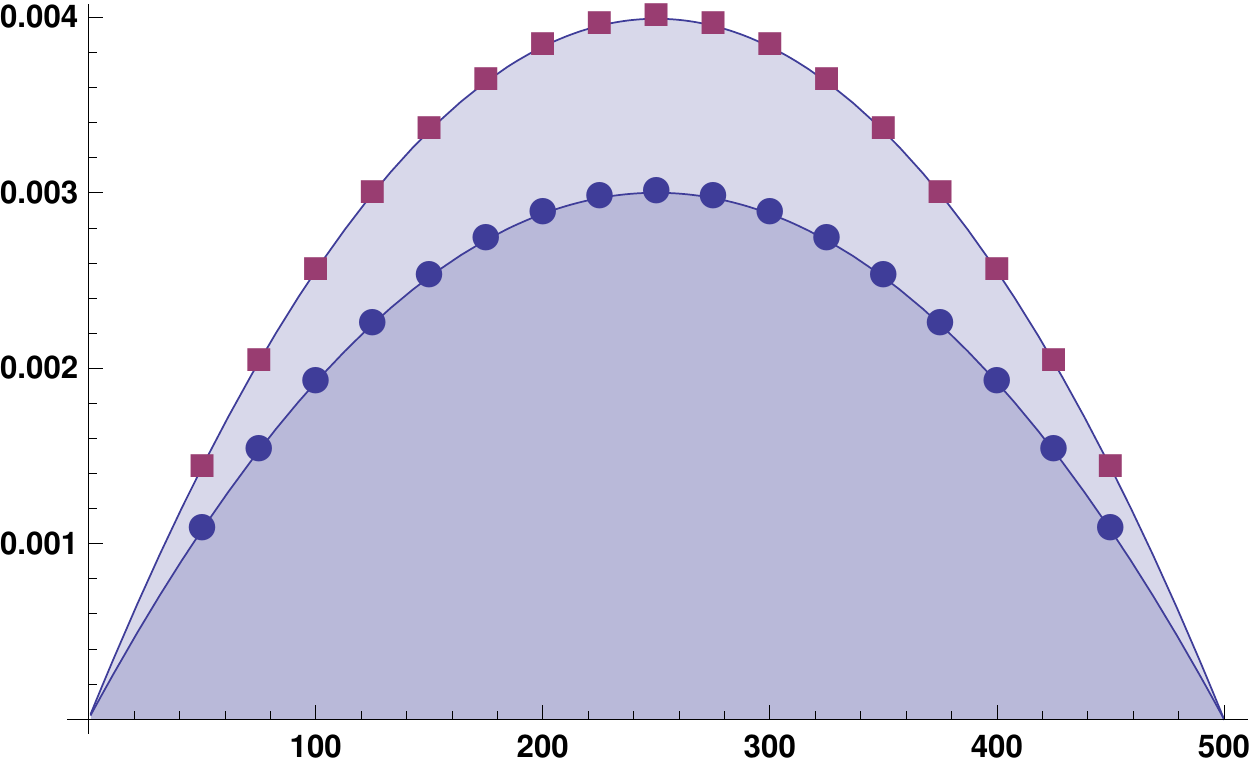}
\put(-10,64){$E(\chord(k))$}
\put(101,2){$k$}
\end{overpic} 
&
\begin{overpic}[width=2.5in]{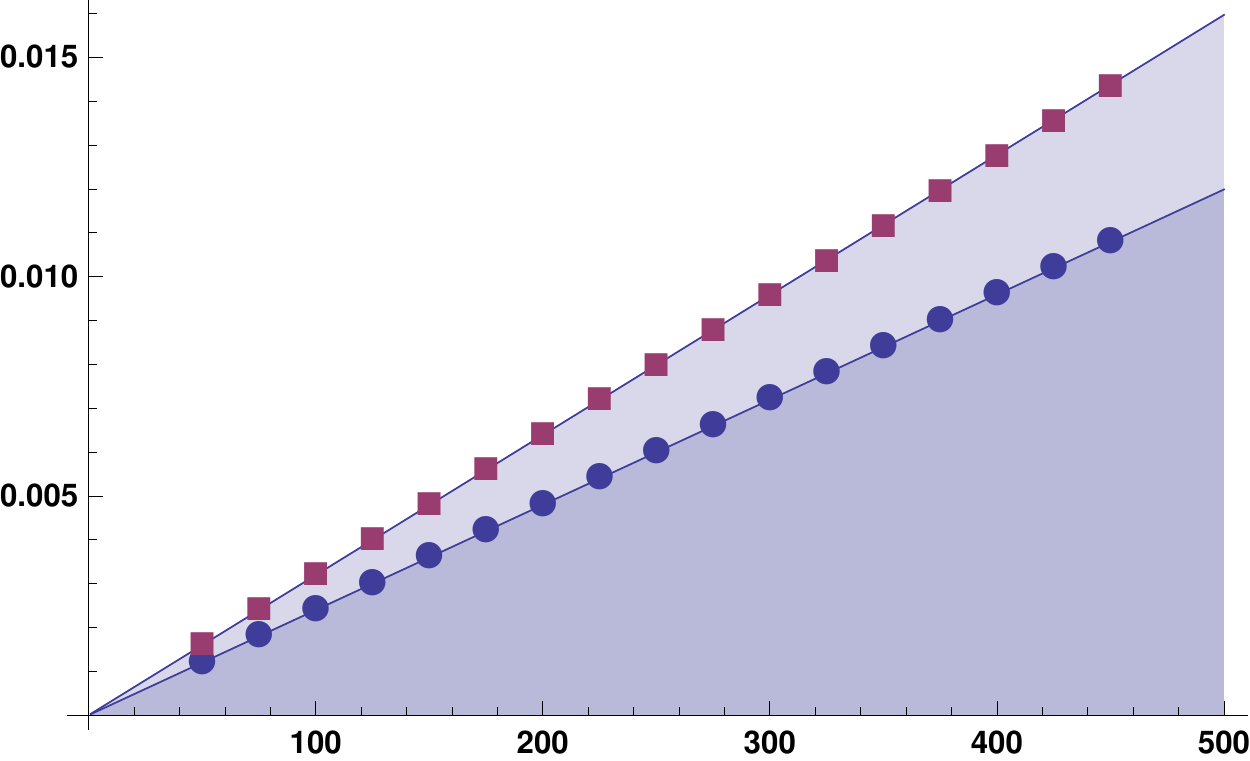}
\put(-10,64){$E(\chord(k))$}
\put(101,2){$k$}
\end{overpic}
\\
$\Pol_2(500)$ and $\Pol_3(500)$
&
$\Arm_2(500)$ and $\Arm_3(500)$
\end{tabular}
\caption{These graphs show the comparison between the mean squared chordlength predictions of Proposition~\ref{prop:expected chords} and data generated by the Gram-Schmidt sampling algorithm of Section~\ref{sec:sampling}. We generated a sample of 200,000 random 500-gons and computed the mean of squared chordlength for all chords spanning $k$ edges in each polygon. The mean of those means is plotted above. Since this is a mean over $\simeq 10^9$ values, it is not surprising that it agrees extremely well with the predictions of Proposition~\ref{prop:expected chords}. To give a specific example, the computed mean for the squared length of chords skipping 250 edges in a closed space 500-gon was $0.00300383$, while the predicted value was $\simeq 0.003000012000$. These values differ by $\simeq 0.127 \%$. The left-hand graph shows theory and experiment for closed plane (upper curve) and space (lower curve) polygons, while the right-hand graph shows the same for open plane (upper curve) and space (lower curve) polygons. Each experiment takes about 18 seconds to run on a desktop Mac.
\label{fig:msc}}
\end{figure} 

\begin{figure}
\begin{overpic}[width=3.5in]{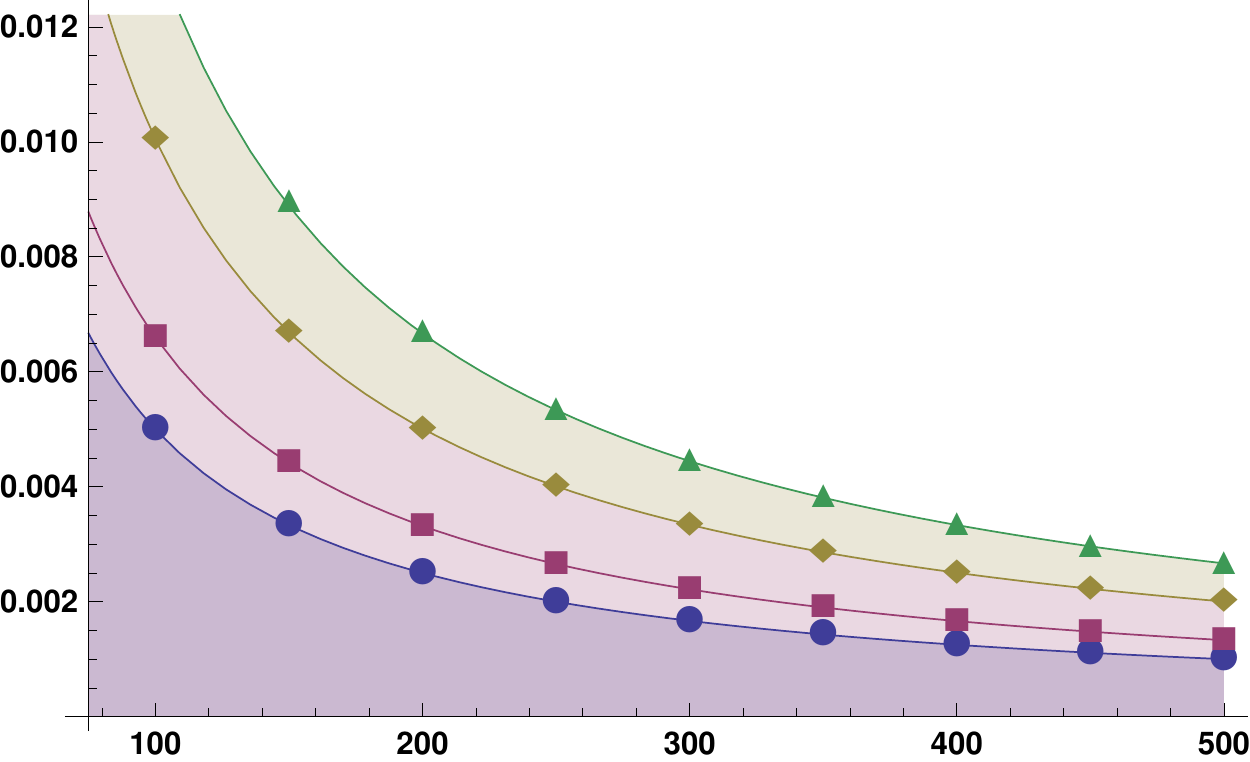}
\put(-7,64){$E(\gyradius(n))$}
\put(101,2.5){$n$}
\end{overpic} 
\\
$\Pol_3(n)$, $\Pol_2(n)$, $\Arm_3(n)$, and $\Arm_2(n)$
\caption{This graph shows the comparison between the radius of gyration predictions of Proposition~\ref{prop:egyradius} and data generated by the Gram-Schmidt sampling algorithm of Section~\ref{sec:sampling}. We generated a sample of 40,000 random polygons with 100, 200, 300, 400, and 500 edges in each of our four classes of polygons. The mean gyradius of each of these samples is plotted above, along with the curves from Proposition~\ref{prop:egyradius}. In order from bottom to top, the four curves are for $\Pol_3(n)$ (circles), $\Pol_2(n)$ (squares), $\Arm_3(n)$ (diamonds), and $\Arm_2(n)$ (triangles). Since computing the radius of gyration of an $n$-gon is an $O(n^2)$ computation, the time to compute gyradius for the polygons in the ensemble dominated the time required to generate the ensemble of polygons. The resulting total experiment takes about 3 minutes to run for each class of polygons. As before, the convergence to our predictions is very good: for closed, space 300-gons, our prediction for the expected radius of gyradius is $1/600 \simeq 0.001666667$. The mean of gyradius for our sample was $0.00166576$, which differs by $\simeq 0.054 \%$.\label{fig:gyradius}}
\end{figure}

The sampling algorithm above does not directly generate an ensemble of equilateral polygons: we are sampling only in the entire polygon spaces $\Pol_i(n)$ and $\Arm_i(n)$ and not in the codimension $n-1$ subspaces $\ePol_i(n)$ or $\eArm_i(n)$. However, we can generate polygons uniformly sampled from a neighborhood of $\ePol_i(n)$ or $\eArm_i(n)$ by rejection sampling: we generate a larger ensemble of polygons using the above algorithm and then throw out the polygons with edges longer than a certain bound. A reference implementation of this polygon generator is provided in \texttt{plCurve}. To estimate the performance of the method, we can use the pdf of Proposition~\ref{prop:edgelengthBound}. If we make the simplifying assumption that the edgelengths are independent (of course, this is not literally true, but it may be asymptotically true for large numbers of edges) we can estimate the probability of success for the rejection sampler for a given upper bound on edgelengths. Figure~\ref{fig:rejectionsampling} shows a set of such computations carried out explicitly for $2000$-gons.

\begin{figure}
Probability $P$ that a random $2,\!000$-gon has maximum edgelength $< \lambda (\text{mean edgelength})$.\\
\vspace{0.25in}
 
\begin{tabular}{c@{\hspace{0.5in}}c}
\begin{overpic}[width=2.5in]{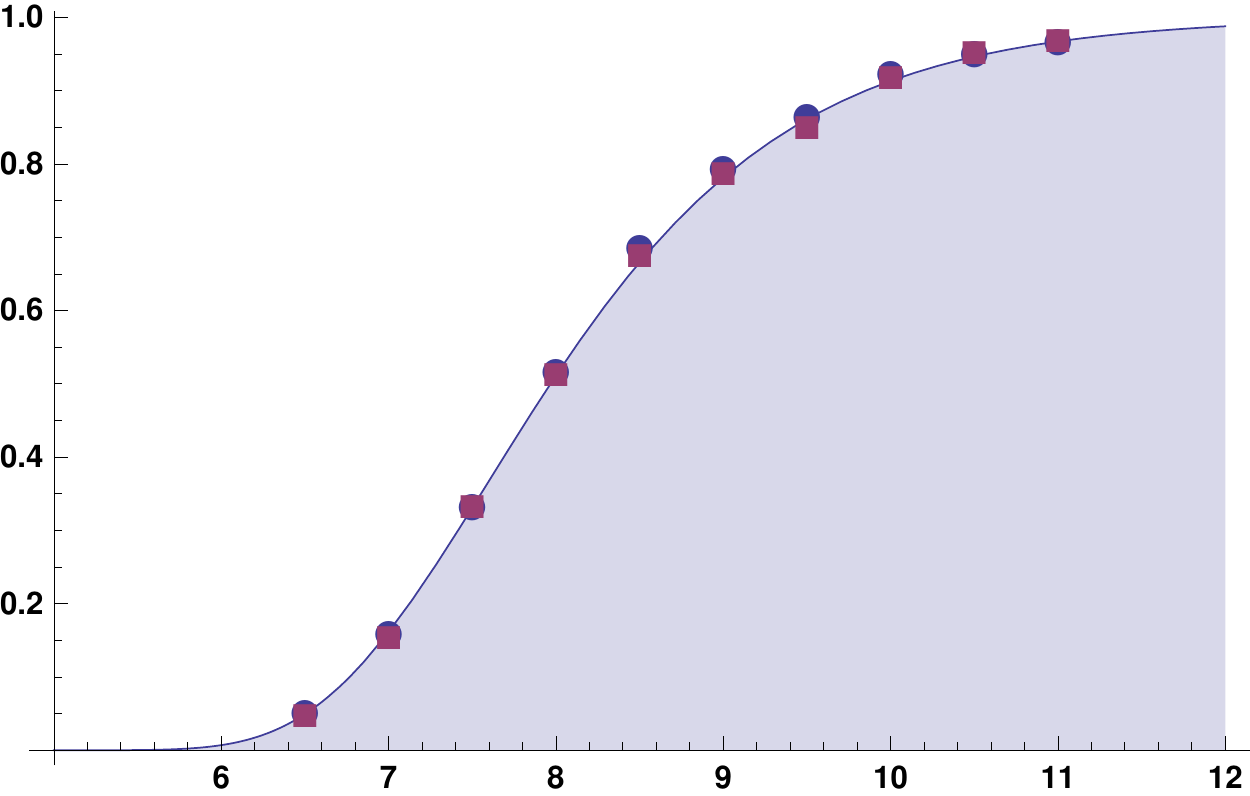}
\put(-1,66){$P$}
\put(101,2){$\lambda$}
\end{overpic} 
&
\begin{overpic}[width=2.5in]{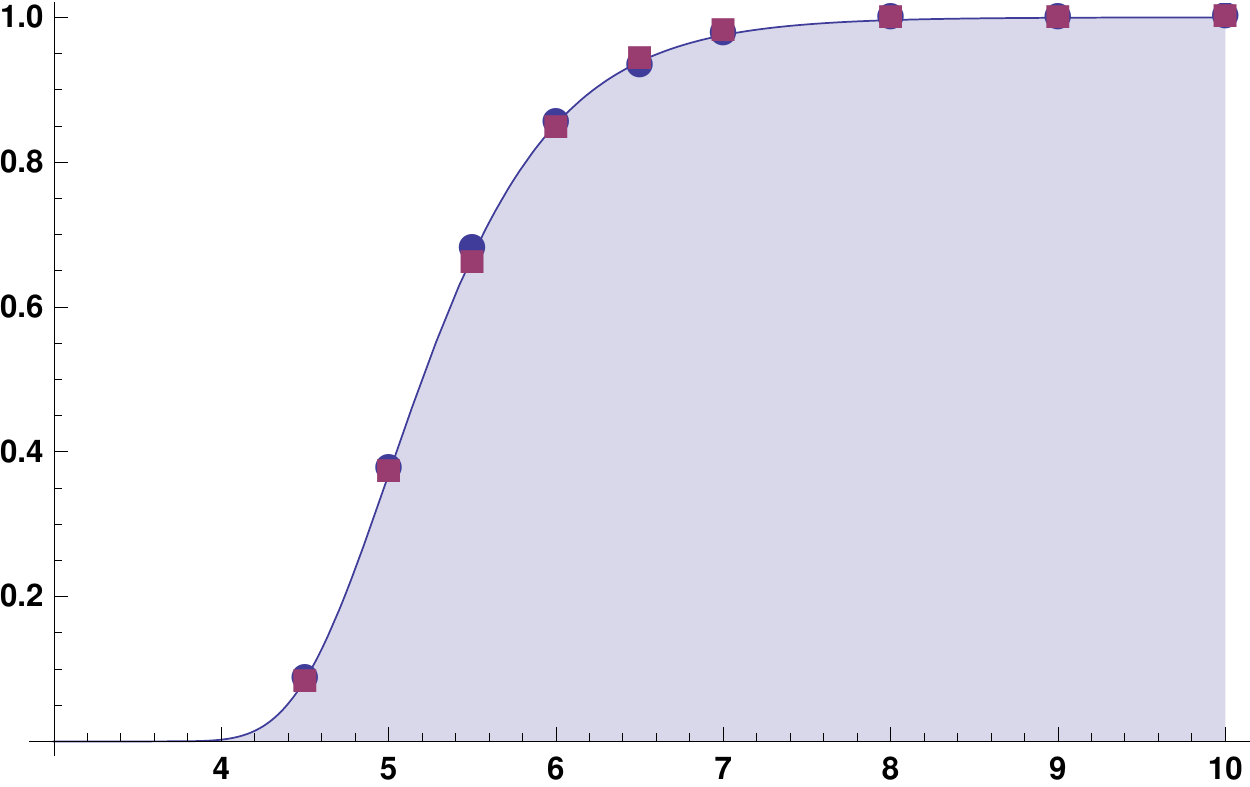}
\put(-1,66){$P$}
\put(101,2){$\lambda$}
\end{overpic}
\\
$\Pol_2(2,\!000)$ and $\Arm_2(2,\!000)$
&
$\Pol_3(2,\!000)$ and $\Arm_3(2,\!000)$
\end{tabular}
\caption{These graphs show data collected from rejection sampling applied to 2,000 edge polygons. For each of our four classes of polygons and various values of $\lambda$ we sampled until we had accepted 2,000 polygons with maximum edgelength less than $\lambda$ times the mean edgelength for 2,000-gons (1/1000). We recorded the fraction of accepted polygons for each $\lambda$. This data is plotted above; on the left for plane polygons (circles) and arms (squares) and on the right for space polygons (circles) and arms (squares). The curves show the comparison estimate computed from Proposition~\ref{prop:edgelengthBound} assuming that each edgelength is sampled independently from the distribution of Proposition~\ref{prop:pdfs}. As we can see, this estimate predicts the actual performance of the sampling algorithm quite well. For 2,000 edge plane polygons, it is quite efficient to generate polygons with $\lambda = 6.5$, as about $4.5\%$ of generated polygons are accepted, so our reference implementation generates a closed polygon in $\simeq 0.018$ seconds. For 2,000 edge space polygons, the variance of edgelength is lower, so it is easier to generate polygons with lower $\lambda$ values. With $\lambda = 4.5$, about $8\%$ of polygons are accepted, so our reference implementation generates a closed space 2,000-gon in $\simeq 0.013$ seconds.
\label{fig:rejectionsampling}}
\end{figure} 

We note that while the rejection sampler works as expected, it does not generate an ensemble of polygons which look much like a random sample of equilateral polygons from a statistical point of view. For instance, even when the rejection sampler accepts only 1 in 1,000 closed space 2,000-gons, it is still generating an ensemble of polygons where some members have a longest edge as much as 3.95 times the length of the mean edge. A numerical experiment shows that the second moment of the edgelength distribution of these polygons is $\simeq 1.46 \times 10^{-6}$, which is rather close to the value $6/(2,\!000*2,\!001) \simeq 1.499 \times 10^{-6}$ predicted for the entire ensemble of closed space 2,000-gons by Corollary~\ref{cor:moments} and rather far from the corresponding value of $4/2,\!000^2 \simeq 1.0 \times 10^{-6}$ for equilateral polygons. The mean chordlengths and radii of gyration for these polygons behave accordingly-- they are very close to the corresponding mean values over the entire polygon space and quite far from the mean values for equilateral polygons.

\section{Future Directions}

The sphere and Stiefel manifold techniques of this paper seem to open a large number of interesting opportunities for future exploration in polygon space. Of course, having obtained explicit formulae for the expected values of chord lengths, it is immediately desirable to start working out the higher moments of these distributions and indeed to express them explicitly as probability distributions. 

For example, one can certainly expect to say much more than the analysis of Section~\ref{sec:asymptotics} about how the pdfs governing ``short'' arcs of a closed polygon converge to corresponding pdfs for arcs of open polygonal arms. We believe this can be done with theorems of De Finetti type in probability such as \cite{Diaconis:2005p2630}. These give explicit bounds on how quickly the probability distributions of the individual coordinates of the frames in a Stiefel manifolds converge to independent (normalized) Gaussians. We have not yet investigated this question.

Given a space polygon, we may construct a plane polygon by projecting to a plane. It is natural to consider the relationship between the probability measure on plane polygons obtained by pushing forward our measure on space polygons using this construction and our original measure on plane polygons. They cannot be the same measure, of course: the projected polygons are shorter, with a variable total length of expected value $\pi/2$ (by Crofton's formula). In fact, even if we rescale the projected polygons to length $2$, the rescaled measure does not seem to be the same either: computing the average $\gyradius$ for 50,000 closed space 1024-gons projected to the plane and then rescaled to length 2 yields $5.27 \times 10^{-4}$ which is very far from the expected value of $\gyradius$ of $\simeq 6.504 \times 10^{-4}$ predicted by Proposition~\ref{prop:egyradius} for closed, length 2, plane 1024-gons.

Grosberg~\cite{Grosberg:2008dl} was able to use the expectation of the dot product of two edges in an equilateral closed random polygon (cf. Corollary~\ref{cor:edotproduct}) to argue that the expected value of total curvature for an $n$-segment closed (space) polygon is asymptotically $n \frac{\pi}{2} + \frac{3}{8}\pi$. Averaging total curvature over a sample of 50 million of our 5,000-gons gives an average total curvature of $ 7854.764997 \simeq 5000 \frac{\pi}{2} + 0.783364$. The ``surplus'' curvature is fairly close to $\frac{1}{4} \pi \simeq 0.785398$, which certainly suggests the conjecture that the corresponding expectation of total curvature for our polygons is $n \frac{\pi}{2} + \frac{1}{4}\pi$. We will address this conjecture in~\cite{FutureWork}.

The expected value of the dot product of two edges is fairly easy to compute (as in Grosberg's work or Corollaries~\ref{cor:dotproduct} and~\ref{cor:edotproduct}), but higher moments seem more challenging. More generally, a deeper understanding of the correlations between edges in either equilateral or non-equilateral polygons is desirable. In the case of equilateral polygons, the joint distribution of the dot products between all pairs of edges encodes this correlation completely, so a first step might be to compute either higher moments of the dot products or covariances of dot products. For non-equilateral polygon spaces the joint distribution of dot products remains interesting, but the correlations between edges are also dependent on the joint distribution of edgelengths. By Corollary~\ref{cor:covariance} the covariance of edgelengths is non-zero in our model, so as expected this joint distribution is non-trivial. 

There are also more detailed geometric structures available for investigation. Since our arm and polygon spaces are quotients of symmetric spaces by groups of isometries, we do not just have a measure on the arm and polygon spaces but a Riemannian metric, defined so that our projections are (almost everywhere) Riemannian submersions. This means that (for instance) optimal reconfigurations of closed or equilateral polygons can be obtained by following corresponding geodesics. More relevantly for the statistical physics community, the Laplace-Beltrami operator on the Stiefel manifolds is well-understood, which should allow us to make some rather precise statements about (intrinsic) Brownian motion of closed polygonal chains.

\section{Acknowledgements}
The authors are grateful to many friends and colleagues for helpful feedback on this paper and discussions of polygon space, including the anonymous referee, Malcolm Adams, Michael Berglund, Mark Dennis, Yuanan Diao, Claus Ernst, John Etnyre, David Gay, Alexander Grosberg, Danny Krashen, Rob Kusner, Matt Mastin, Ken Millett, Frank Morgan, Tom Needham, Jason Parsley, De Witt Sumners, Margaret Symington, Stu Whittington, Erica Uehara, Mike Usher, and Laura Zirbel.

\appendix
\section{The Proof of Proposition~\ref{prop:schord}} \label{appendix}

In this section we prove Proposition~\ref{prop:schord}, which we restate for convenience:

\begin{proposition}
	\[
		\schord(k,\mathcal{P}) = \frac{k(n-k)}{n(n-1)}\sum_{j=1}^n |e_j|^2 + 4 \frac{k(k-1)}{n(n-1)}\ell^2.% \left(\cos^2 2\theta + \sin^2 2\theta \langle \vec{a}, \vec{b}\rangle^2\right).
	\]
\end{proposition}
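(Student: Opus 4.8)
The plan is to carry out the permutation average head-on, using the fact that symmetrizing over $S_n$ converts the sums over the first $k$ positions into averages over single indices and over ordered pairs of distinct indices. To keep the bookkeeping clean, set
\[
  P_i = \cos^2\theta\,a_i\bar a_i - \sin^2\theta\,b_i\bar b_i \in \R, \qquad Q_i = 2\cos\theta\sin\theta\,a_i\bar b_i \in \C,
\]
so that the bracketed summand in Definition~\ref{def:schord} is $\bigl(\sum_{j=1}^k P_{\sigma(j)}\bigr)^2 + \bigl|\sum_{j=1}^k Q_{\sigma(j)}\bigr|^2$ and $\schord(k,\mathcal{P}) = \frac{4}{n!}\sum_\sigma[\,\cdots\,]$.

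First I would expand each square into its diagonal part ($j=j'$) and its cross part ($j \neq j'$) and average term by term over $\sigma \in S_n$. Since each $\sigma(j)$ is uniform over $\{1,\dots,n\}$, the diagonal contributes $\frac{k}{n}\sum_i P_i^2$ (and likewise $\frac{k}{n}\sum_i|Q_i|^2$); since each ordered pair $(\sigma(j),\sigma(j'))$ with $j \neq j'$ is uniform over ordered pairs of distinct indices, and there are $k(k-1)$ such index pairs, the cross terms contribute $\frac{k(k-1)}{n(n-1)}\sum_{i \neq i'} P_i P_{i'}$ (and likewise $\frac{k(k-1)}{n(n-1)}\sum_{i\neq i'} Q_i \overline{Q_{i'}}$). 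Using $\sum_{i\neq i'} P_i P_{i'} = (\sum_i P_i)^2 - \sum_i P_i^2$ and the analogous identity for $Q$, this gives
\[
  \schord(k,\mathcal{P}) = \frac{4k}{n}\sum_i \bigl(P_i^2 + |Q_i|^2\bigr) + \frac{4k(k-1)}{n(n-1)}\Bigl[\bigl(\sum_i P_i\bigr)^2 + \bigl|\sum_i Q_i\bigr|^2 - \sum_i\bigl(P_i^2 + |Q_i|^2\bigr)\Bigr].
\]

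The step I expect to do the real work is spotting two perfect-square identities that reconnect the $P,Q$ data to intrinsic polygon quantities. A one-line expansion shows $P_i^2 + |Q_i|^2 = \bigl(\cos^2\theta\,|a_i|^2 + \sin^2\theta\,|b_i|^2\bigr)^2 = \tfrac14|e_i|^2$, which is just the statement that the Hopf map squares norms, consistent with the complex form \eqref{eq:coords}. The same algebra applied to the full index sums, together with \eqref{eq:arm}, yields $\bigl(\sum_i P_i\bigr)^2 + \bigl|\sum_i Q_i\bigr|^2 = \tfrac14\,\ell^2$, since $\sum_i e_i$ is precisely the failure-to-close vector. Substituting $4\sum_i(P_i^2+|Q_i|^2) = \sum_i|e_i|^2$ and $4\bigl[(\sum_i P_i)^2 + |\sum_i Q_i|^2\bigr] = \ell^2$ collapses the expression to
\[
  \schord(k,\mathcal{P}) = \frac{k}{n}\sum_i|e_i|^2 + \frac{k(k-1)}{n(n-1)}\Bigl(\ell^2 - \sum_i|e_i|^2\Bigr).
\]

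Finally I would combine the two $\sum_i|e_i|^2$ terms via $\frac{k}{n} - \frac{k(k-1)}{n(n-1)} = \frac{k(n-k)}{n(n-1)}$ to reach the stated identity; as a consistency check, $k=n$ returns $\schord(n,\mathcal{P}) = \ell^2$, which is forced since every permutation leaves the full chord unchanged. The only genuine subtlety is the combinatorial averaging of the cross terms — keeping the ordered-pair count $k(k-1)$ and the ordered-pair probability $\frac{1}{n(n-1)}$ straight — together with recognizing the perfect-square collapse; everything after that is routine algebra.
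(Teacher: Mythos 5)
Your argument is correct, and one thing should be flagged up front: the coefficient you actually derive for the $\ell^2$ term is $\frac{k(k-1)}{n(n-1)}$, with no factor of $4$, and that is the right answer. It matches the statement of Proposition~\ref{prop:schord} in the body of the paper and survives your own $k=n$ sanity check ($\schord(n,\mathcal{P})=\ell^2$), whereas the version of the statement you were handed (the appendix restatement, with $4\tfrac{k(k-1)}{n(n-1)}\ell^2$) carries a spurious $4$: by \eqref{eq:arm} the quantity $\cos^2 2\theta + \sin^2 2\theta\,\lvert\langle\vec a,\vec b\rangle\rvert^2$ equals $\ell^2/4$, not $\ell^2$, so the extra factor should not appear. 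Do not ``fix'' your algebra to match it.

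On the method: the combinatorial heart of your proof is identical to the paper's. The paper's Lemma~\ref{lem:distribute} is precisely your observation that averaging over $S_n$ makes each $\sigma(j)$ uniform on indices and each ordered pair $(\sigma(j),\sigma(j'))$ with $j\neq j'$ uniform on ordered pairs of distinct indices; indeed $\tfrac{k!(n-k)!}{n!}\binom{n-1}{k-1}=\tfrac{k}{n}$ and $\tfrac{k!(n-k)!}{n!}\binom{n-2}{k-2}=\tfrac{k(k-1)}{n(n-1)}$. The difference is in the packaging. The paper expands the summand into eight families of monomials in $a_i\bar a_i$, $b_i\bar b_i$, $a_i\bar b_j$, applies the counting lemma to each, and then spends several displays using $\sum a_j\bar a_j=\sum b_j\bar b_j=1$ to eliminate off-diagonal sums before reassembling perfect squares at the very end. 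Your substitution $P_i,Q_i$ front-loads the two norm identities $P_i^2+\lvert Q_i\rvert^2=\tfrac14\lvert e_i\rvert^2$ and $\bigl(\sum P_i\bigr)^2+\bigl\lvert\sum Q_i\bigr\rvert^2=\tfrac14\ell^2$ (both instances of $\lvert\alpha+\beta\J\rvert^2=\alpha^2+\lvert\beta\rvert^2$ for real $\alpha$, applied to $e_i$ and to $\sum e_i$), so that all off-diagonal sums collapse at once via $\sum_{i\neq i'}P_iP_{i'}=\bigl(\sum P_i\bigr)^2-\sum P_i^2$ and its analogue for $Q$. This turns a multi-page computation into a few lines, and it barely touches the normalization $\lvert\vec a\rvert=\lvert\vec b\rvert=1$ (only through the identification of $\ell^2$ via \eqref{eq:arm}). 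The one point worth making explicit in a write-up is that $\sum_{i\neq i'}Q_i\bar Q_{i'}$ is automatically real, being $\lvert\sum Q_i\rvert^2-\sum\lvert Q_i\rvert^2$, so no imaginary parts are silently discarded in the cross-term average.
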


\begin{proof}
	From the definition of $\schord(k,\mathcal{P})$ and \eqref{eq:arm}, this is equivalent to proving
	\begin{multline}\label{eq:schordMain}
		\frac{4}{n!}\sum_{\sigma \in S_n} \left[\left(\cos^2\theta\sum_{j=1}^k a_{\sigma(j)}\bar{a}_{\sigma(j)} - \sin^2\theta\sum_{j=1}^k b_{\sigma(j)}\bar{b}_{\sigma(j)} \right)^2 + 4\sin^2\theta \cos^2\theta\left|\sum_{j=1}^k a_{\sigma(j)}\bar{b}_{\sigma(j)}\right|^2 \right] \\
		= \frac{k(n-k)}{n(n-1)}\sum_{j=1}^n |e_j|^2 + 4\frac{k(k-1)}{n(n-1)}\left(\cos^2 2\theta + \sin^2 2\theta \left|\left\langle \vec{a}, \vec{b} \right\rangle\right|^2\right).
	\end{multline}
	
	To prove \eqref{eq:schordMain}, first expand the terms on the left hand side:
	\begin{multline}
		 \left(\cos^2\theta\sum_{j=1}^k a_{\sigma(j)}\bar{a}_{\sigma(j)} - \sin^2\theta\sum_{j=1}^k b_{\sigma(j)}\bar{b}_{\sigma(j)} \right)^2  \\
		= \cos^4\theta\sum_{j=1}^k a_{\sigma(j)}^2\bar{a}_{\sigma(j)}^2 \,+\,2 \cos^4\theta\sum_{1 \leq i< j \leq k} a_{\sigma(i)}\bar{a}_{\sigma(i)} a_{\sigma(j)}\bar{a}_{\sigma(j)} \\ 
		\label{eq:main1stexpand}  \quad - 2\sin^2\theta\cos^2\theta\sum_{i=1}^k \sum_{j=1}^k a_{\sigma(i)}\bar{a}_{\sigma(i)} b_{\sigma(j)}\bar{b}_{\sigma(j)} \\
		+2\sin^4\theta \sum_{1 \leq i< j \leq k} b_{\sigma(i)}\bar{b}_{\sigma(i)}b_{\sigma(j)}\bar{b}_{\sigma(j)} + \sin^4\theta\sum_{j=1}^k b_{\sigma(j)}^2\bar{b}_{\sigma(j)}^2
	\end{multline}
	and
	\begin{align}
		\nonumber \left|\sum_{j=1}^k a_{\sigma(j)} \bar{b}_{\sigma(j)}\right|^2 & = \left(\sum_{i=1}^k a_{\sigma(i)}\bar{b}_{\sigma(i)}\right) \left(\sum_{j=1}^k \bar{a}_{\sigma(j)}b_{\sigma(j)}\right) \\ 
		\label{eq:main2ndexpand} & = \sum_{i=1}^k a_{\sigma(i)}\bar{a}_{\sigma(i)} b_{\sigma(i)}\bar{b}_{\sigma(i)} + \mathop{\sum_{1 \leq i, j \leq k}}_{i \neq j} a_{\sigma(i)}\bar{a}_{\sigma(j)}b_{\sigma(i)}\bar{b}_{\sigma(j)}.
	\end{align}

	Using \eqref{eq:main1stexpand} and \eqref{eq:main2ndexpand} to re-write the left hand side of \eqref{eq:schordMain} yields
	\begin{multline}\label{eq:mainrewrite}
		 \frac{4}{n!}\sum_{\sigma \in S_n} \left[ \cos^4\theta\sum_{j=1}^k a_{\sigma(j)}^2\bar{a}_{\sigma(j)}^2 + 2 \cos^4\theta\sum_{1 \leq i< j \leq k} a_{\sigma(i)}\bar{a}_{\sigma(i)} a_{\sigma(j)}\bar{a}_{\sigma(j)} \right. \\
		- 2\sin^2\theta\cos^2\theta\sum_{i=1}^k \sum_{j = 1}^k a_{\sigma(i)}\bar{a}_{\sigma(i)} b_{\sigma(j)}\bar{b}_{\sigma(j)} + 2\sin^4\theta \sum_{1 \leq i< j \leq k} b_{\sigma(i)}\bar{b}_{\sigma(i)} b_{\sigma(j)}\bar{b}_{\sigma(j)}  \\
		+ \sin^4\theta\sum_{j=1}^k b_{\sigma(j)}^2\bar{b}_{\sigma(j)}^2   + 4\sin^2\theta\cos^2\theta \sum_{j=1}^k a_{\sigma(j)}\bar{a}_{\sigma(j)} b_{\sigma(j)}\bar{b}_{\sigma(j)} \\
		\left. +\, 4\sin^2\theta\cos^2\theta \mathop{\sum_{1 \leq i, j \leq k}}_{i \neq j} a_{\sigma(i)}\bar{a}_{\sigma(j)}b_{\sigma(i)}\bar{b}_{\sigma(j)} \right]
	\end{multline}

	Since $S_n$ is finite, we can distribute the outer sum, which we do using the following lemma.

	\begin{lemma}\label{lem:distribute}
		Let $\{c_1, \ldots , c_n\}$ be a set of $n$ elements. Then
		\begin{equation}\label{eq:dist1}
			\sum_{\sigma \in S_n} \sum_{j=1}^k c_{\sigma(j)} = k!(n-k)!{n-1 \choose k-1} \sum_{j=1}^n c_j
		\end{equation}
		and
		\begin{equation}\label{eq:dist2}
			\sum_{\sigma \in S_n} \sum_{1 \leq i< j \leq k} c_{\sigma(i)} c_{\sigma(j)} = k!(n-k)! {n-2 \choose k-2} \sum_{1 \leq i< j \leq n} c_i c_j
		\end{equation}
	\end{lemma}

	\begin{proof}
		First, notice that any rearrangement of the $k$-element set $\{c_{\sigma(1)}, \ldots , c_{\sigma(k)}\}$ or of its $(n-k)$-element complement doesn't change the sum $\sum_{j=1}^k c_{\sigma(j)}$, so each $\sum_{j=1}^k c_{\sigma(j)}$ is repeated $k!(n-k)!$ times in \eqref{eq:dist1}, and likewise for each $\sum_{i < j} c_{\sigma(i)} c_{\sigma(j)}$ in \eqref{eq:dist2}.

		Moreover, for each fixed index $m$, the term $c_m$ appears in exactly ${n-1 \choose k-1}$ sums of the form $\sum_{j=1}^k c_{\sigma(j)}$, which implies \eqref{eq:dist1}. Similarly, for each fixed $m$ and $s$, the term $c_m c_s$ appears in ${n-2 \choose k-2}$ sums of the form $\sum_{i < j} c_{\sigma(i)}c_{\sigma(j)}$, so \eqref{eq:dist2} follows.
	\end{proof}

	Using Lemma~\ref{lem:distribute} and
	\[
		\sum_{i = 1}^k \sum_{j=1}^k a_{\sigma(i)}\bar{a}_{\sigma(i)} b_{\sigma(j)}\bar{b}_{\sigma(j)} = \sum_{i=1}^k a_{\sigma(i)}\bar{a}_{\sigma(i)} b_{\sigma(i)}\bar{b}_{\sigma(i)} + \mathop{\sum_{1 \leq i, j \leq k}}_{i \neq j} a_{\sigma(i)}\bar{a}_{\sigma(i)} b_{\sigma(j)}\bar{b}_{\sigma(j)},
	\]
	we can re-write \eqref{eq:mainrewrite} as 
	\begin{multline}\label{eq:mainrewrite2}
		4\frac{k!(n-k)!}{n!}\left[{n-1 \choose k-1}\cos^4\theta \sum_{j=1}^n a_j^2\bar{a}_j^2 +2 {n-2 \choose k-2}\cos^4 \theta \sum_{1 \leq i< j \leq n} a_i\bar{a}_i a_j\bar{a}_j \right. \\
		- 2 {n-1 \choose k-1}\sin^2\theta \cos^2 \theta \sum_{j=1}^n a_j\bar{a}_jb_j\bar{b}_j -  2 {n-2 \choose k-2}\sin^2 \theta \cos^2 \theta \mathop{\sum_{1 \leq i, j \leq n}}_{i \neq j} a_i\bar{a}_ib_j\bar{b}_j \\
		+ 2{n-2 \choose k-2} \sin^4 \theta \sum_{1 \leq i< j \leq n} b_i\bar{b}_ib_j\bar{b}_j + {n-1 \choose k-1} \sin^4 \theta \sum_{j=1}^n b_j^2 \bar{b}_j^2 \\
		\left. + 4 {n-1 \choose k-1} \sin^2\theta \cos^2 \theta\sum_{j=1}^n a_j\bar{a}_jb_j\bar{b}_j + 4 {n-2 \choose k-2} \sin^2\theta \cos^2\theta \mathop{\sum_{1 \leq i, j \leq n}}_{i \neq j} a_i \bar{a}_j b_i \bar{b}_j\right]. 
	\end{multline}
	From here on all sums will be from $1$ to $n$, we will use ``$i < j$'' as shorthand for ``$1 \leq i < j \leq n$'' and ``$i \neq j$'' as shorthand for ``$1 \leq i , j \leq n, i \neq j$''.

	Notice that
	\begin{multline*}
		4{n-1 \choose k-1} \sin^2\theta \cos^2 \theta\sum_{j=1}^n a_j \bar{a}_j b_j \bar{b}_j + 4 {n-2 \choose k-2} \sin^2\theta \cos^2 \theta\sum_{i \neq j} a_i\bar{a}_j b_i \bar{b}_j \\
		= 4\sin^2\theta \cos^2 \theta \left[{n-2 \choose k-1} \sum_{j=1}^n a_j \bar{a}_j b_j \bar{b}_j + {n-2 \choose k-2} \left(\sum_{j=1}^n a_j \bar{a}_j b_j \bar{b}_j +  \sum_{i \neq j} a_i \bar{a}_j\right)\right]\\
		= 4\sin^2\theta\cos^2\theta \left[{n-2 \choose k-1} \sum_{j=1}^n a_j \bar{a}_j b_j \bar{b}_j + {n-2 \choose k-2} \left(\sum_{i=1}^na_i\bar{b}_i\right)\left(\sum_{j=1}^n \bar{a}_jb_j\right)\right] \\
		 = 4\sin^2\theta \cos^2\theta \left[{n-2 \choose k-1} \sum_{j=1}^n a_j \bar{a}_j b_j \bar{b}_j + {n-2 \choose k-2}\left| \left\langle \vec{a}, \vec{b} \right\rangle\right|^2\right].
	\end{multline*}

	Therefore, \eqref{eq:mainrewrite2} can be rewritten as
	\begin{multline}\label{eq:1}
		4\frac{k!(n-k)!}{n!}\left[{n-1 \choose k-1} \cos^4 \theta\sum_{j=1}^n a_j^2\bar{a}_j^2 +2 {n-2 \choose k-2}\cos^4 \theta \sum_{1 \leq i< j \leq n} a_i\bar{a}_i a_j\bar{a}_j \right. \\
		- 2 {n-1 \choose k-1}\sin^2\theta\cos^2 \theta \sum_{j=1}^n a_j\bar{a}_jb_j\bar{b}_j - 2 {n-2 \choose k-2}\sin^2\theta \cos^2 \theta \sum_{i \neq j} a_i\bar{a}_ib_j\bar{b}_j \\
		 + 2{n-2 \choose k-2} \sin^4 \theta \sum_{i< j} b_i\bar{b}_ib_j\bar{b}_j + {n-1 \choose k-1} \sin^4 \theta \sum_{j=1}^n b_j^2 \bar{b}_j^2 \\
		\left. + 4 {n-2 \choose k-1} \sin^2\theta \cos^2 \theta \sum_{j=1}^n a_j\bar{a}_jb_j\bar{b}_j\right] + 16\frac{k(k-1)}{n(n-1)} \sin^2\theta \cos^2 \theta \left|\left\langle \vec{a}, \vec{b} \right\rangle\right|^2 .
	\end{multline}

	Now, we use the fact that $|\vec{a}|^2 = 1 = |\vec{b}|^2$ to show that this is equal to the right hand side of \eqref{eq:schordMain}; specifically, we have
	\begin{enumerate}
		\renewcommand{\theenumi}{(\roman{enumi})}
		\item \label{item:arule}$\sum_{j=1}^n a_j\bar{a}_j = 1$
		\item \label{item:brule}$\sum_{j=1}^n b_j\bar{b}_j = 1$
		% \item \label{item:iprule}$\sum_{i=1}^n a_i b_i = 0$
	\end{enumerate}

	First, we use \ref{item:arule} in the form
	\begin{equation}\label{item:arule2}
		1 = \left(\sum_{j=1}^n a_j\bar{a}_j\right)^2 = \sum_{j=1}^n a_j^2 \bar{a}_j^2 + 2 \sum_{i< j} a_i\bar{a}_ia_j\bar{a}_j
	\end{equation}
	to eliminate $\sum_{i < j} a_i\bar{a}_i a_j\bar{a}_j$ from \eqref{eq:1}, yielding
	\begin{multline*}
		4\frac{k!(n-k)!}{n!}\left[{n-2\choose k-2}\cos^4 \theta + {n-2 \choose k-1}\cos^4 \theta \sum_{j=1}^n a_j^2\bar{a}_j^2 \right.\\
		- 2{n-1 \choose k-1} \sin^2\theta \cos^2\theta \sum_{j=1}^n a_j \bar{a}_j b_j \bar{b}_j - 2 {n-2 \choose k-2} \sin^2\theta \cos^2 \theta\sum_{i \neq j} a_i\bar{a}_i b_j\bar{b}_j \\
		+ 2 {n-2 \choose k-2} \sin^4\theta\sum_{i < j} b_i \bar{b}_i b_j \bar{b}_j + {n-1 \choose k-1} \sin^4 \theta \sum_{j=1}^n b_j^2 \bar{b}_j^2 \\
		\left. + 4 {n-2 \choose k-1} \sin^2\theta \cos^2 \theta\sum_{j=1}^n a_j\bar{a}_j b_j \bar{b}_j \right]+ 16\frac{k(k-1)}{n(n-1)} \sin^2\theta \cos^2\theta\left|\left\langle \vec{a}, \vec{b} \right\rangle\right|^2.
	\end{multline*}

	Similar manipulations involving \ref{item:brule} allow us to eliminate $\sum_{i < j} b_i\bar{b}_i b_j\bar{b}_j$ from the above and get
	\begin{multline}\label{eq:2}
		4\frac{k!(n-k)!}{n!}\left[{n-2\choose k-2}\left(1-2\sin^2\theta \cos^2\theta\right) + {n-2 \choose k-1} \cos^4 \theta \sum_{j=1}^n a_j^2\bar{a}_j^2 \right.\\
		- 2{n-1 \choose k-1}\sin^2\theta \cos^2 \theta \sum_{j=1}^n a_j \bar{a}_j b_j \bar{b}_j - 2 {n-2 \choose k-2} \sin^2\theta \cos^2 \theta \sum_{i \neq j} a_i\bar{a}_i b_j\bar{b}_j \\
		\left. + {n-2 \choose k-1} \sin^4 \theta \sum_{j=1}^n b_j^2 \bar{b}_j^2  + 4 {n-2 \choose k-1} \sin^2\theta \cos^2\theta\sum_{j=1}^n a_j\bar{a}_jb_j \bar{b}_j\right] \\
		+ 16\frac{k(k-1)}{n(n-1)} \sin^2 \theta \cos^2 \theta\left|\left\langle \vec{a}, \vec{b}\right\rangle\right|^2,
	\end{multline}
	where the constant coefficient follows from
	\[
		1 = \left(\sin^2\theta + \cos^2\theta \right)^2 = \sin^4 \theta + 2\sin^2\theta \cos^2 \theta + \cos^4 \theta.
	\]

	Next, to eliminate $\sum_{i \neq j}^n a_i\bar{a}_i b_j\bar{b}_j$, note that multiplying \ref{item:arule} and \ref{item:brule} yields
	\begin{equation}\label{eq:abrule}
		1 = \left(\sum_{i=1}^n a_i\bar{a}_i\right)\left(\sum_{j=1}^n b_j\bar{b}_j\right) = \sum_{i=1}^n \sum_{j=1}^n a_i\bar{a}_ib_j\bar{b}_j = \sum_{j=1}^n a_j\bar{a}_j b_j\bar{b}_j + \sum_{i \neq j} a_i\bar{a}_ib_j\bar{b}_j.
	\end{equation}

	Hence, \eqref{eq:2} can be re-written as
	\begin{multline}\label{eq:3}
		4\frac{k!(n-k)!}{n!}\left[ {n-2 \choose k-2} \left(1-4\sin^2\theta \cos^2 \theta\right) + {n-2 \choose k-1} \cos^4 \theta \sum_{j=1}^n a_j^2\bar{a}_j^2 \right.\\
		- 2{n-2 \choose k-1} \sin^2 \theta \cos^2 \theta \sum_{j=1}^n a_j \bar{a}_j b_j \bar{b}_j + {n-2 \choose k-1} \sin^4 \theta \sum_{j=1}^n b_j^2 \bar{b}_j^2 \\
		\left.+ 4 {n-2 \choose k-1} \sin^2\theta \cos^2\theta\sum_{j=1}^n a_j\bar{a}_jb_j \bar{b}_j\right] + 16\frac{k(k-1)}{n(n-1)}\sin^2\theta \cos^2 \theta\left|\left\langle \vec{a}, \vec{b}\right\rangle\right|^2.
	\end{multline}

	Combining the third and fifth terms, then, yields
	\begin{multline*}
		4\frac{k(k-1)}{n(n-1)}\left(1-4\sin^2 \theta \cos^2 \theta \right) + \frac{k!(n-k)!}{n!}{n-2 \choose k-1}\left[\cos^4 \theta\sum_{j=1}^n a_j^2 \bar{a}_j^2 \right. \\
		\left. + 2\sin^2 \theta \cos^2 \theta\sum_{j=1}^n a_j \bar{a}_j b_j \bar{b}_j + \sin^4 \theta \sum_{j=1}^n b_j^2 \bar{b}_j^2 \right] + 16\frac{k(k-1)}{n(n-1)} \sin^2 \theta \cos^2 \theta \left|\left\langle \vec{a}, \vec{b} \right\rangle \right|^2
	\end{multline*}
	or, equivalently,
	\begin{multline*}
		\frac{k(n-k)}{n(n-1)} \sum_{j=1}^n\left(\left|\sqrt{2} \cos \theta a_j\right|^2 + \left|\sqrt{2} \sin \theta b_j\right|^2 \right)^2 \\
		+ 4\frac{k(k-1)}{n(n-1)}\left(1 - 4\sin^2\theta \cos^2 \theta + 4 \sin^2 \theta \cos^2 \theta \left|\left\langle \vec{a}, \vec{b} \right\rangle\right|^2\right).
	\end{multline*}
	
	Using the double angle identity $\sin 2 \theta = 2 \sin \theta \cos\theta$ and the fact that the $j$th edge length $|e_j| = \left|\sqrt{2} \cos \theta a_j\right|^2 + \left|\sqrt{2} \sin \theta b_j\right|^2$, the above is just
	\[
		\frac{k(n-k)}{n(n-1)}\sum_{j=1}^n |e_j|^2 + 4 \frac{k(k-1)}{n(n-1)}\left(1 - \sin^2 2\theta + \sin^2 2\theta \left|\left\langle \vec{a}, \vec{b}\right\rangle\right|^2\right),
	\]
	which is equivalent to the right hand side of \eqref{eq:schordMain}.
\end{proof}

\clearpage

\bibliography{polygrass}

\end{document}